\documentclass[12pt,a4paper]{article}
\usepackage{amsmath,amssymb,amsthm,graphicx,color}
\usepackage[left=1in,right=1in,top=1in,bottom=1in]{geometry}
\usepackage{tikz}
\usetikzlibrary{topaths,calc}
\usepackage{cite}
\usepackage[british]{babel}
\usepackage{enumitem}

\usepackage{hyperref} 
\hypersetup{
    colorlinks=false,
    pdfborder={0 0 0},
}

\newtheorem{theorem}{Theorem}[section]
\newtheorem{corollary}[theorem]{Corollary}
\newtheorem{proposition}[theorem]{Proposition}
\newtheorem{lemma}[theorem]{Lemma}
 \newtheorem{conjecture}[theorem]{Conjecture}

\theoremstyle{definition}

\theoremstyle{remark}
\newtheorem{remark}[theorem]{Remark}

\newtheorem*{remark*}{Remark}

\sloppy \allowdisplaybreaks

 \newcommand{\1}{{\mathbf 1}}

\newcommand{\R}{{\mathbb R}}
\newcommand{\Z}{{\mathbb Z}}

\newcommand{\N}{{\mathbb N}}
\newcommand{\ZP}{{\mathbb Z}_+}
\newcommand{\RP}{{\mathbb R}_+}
\newcommand{\Sp}[1]{{\mathbb S}^{#1}}

\DeclareMathOperator{\Exp}{\mathbb{E}}
\renewcommand{\Pr}{{\mathbb P}}

\DeclareMathOperator{\Int}{int}

\DeclareMathOperator{\sd}{\triangle}

\newcommand{\conv}{\mathop \mathrm{Conv}}
\newcommand{\vol}[1]{\mathop {\mathrm{Vol}_{#1}}}
\newcommand{\cl}{\mathop \mathrm{cl}}
\newcommand{\cone}{\mathop \mathrm{Cone}}

\newcommand{\unif}{\mathrm{Unif}}

\newcommand{\eps}{\varepsilon}

\newcommand{\nn}{\nonumber}

\newcommand{\re}{{\mathrm{e}}}
\newcommand{\rc}{{\mathrm{c}}}
\newcommand{\ud}{{\mathrm d}}

\newcommand{\calA}{{\mathcal A}}
\newcommand{\calF}{{\mathcal F}}
\newcommand{\calG}{{\mathcal G}}
\newcommand{\calH}{{\mathcal H}}
\newcommand{\calP}{{\mathcal P}}
\newcommand{\calX}{{\mathcal X}}

\newcommand{\fB}{{\mathfrak B}}

\newcommand{\as}{\ \text{a.s.}}

\newcommand{\tod}{\overset{\text{d}}{\longrightarrow}}
\newcommand{\eqd}{\overset{d}{=}}

\newcommand{\bigmid}{\; \bigl| \;}
\newcommand{\Bigmid}{\; \Bigl| \;}
\newcommand{\biggmid}{\; \biggl| \;}

\newcommand{\0}{{\mathbf{0}}}

\makeatletter
\def\namedlabel#1#2{\begingroup  
    (#2)%
    \def\@currentlabel{#2}%
    \phantomsection\label{#1}\endgroup
}
\makeatother

\begin{document}

\title{Random walks avoiding their convex hull\\ with a finite memory}
\author{Francis Comets\footnote{NYU Shanghai and Universit\'e Paris Diderot,  Math\'ematiques, case 7012, 75205 Paris Cedex 13, France; 
\texttt{\href{mailto:comets@lpsm.paris}{comets@lpsm.paris}}}
\and Mikhail V.\ Menshikov\footnote{Department of Mathematical Sciences, Durham University, South Road, Durham DH1 3LE, UK;
\texttt{$\{$\href{mailto:mikhail.menshikov@durham.ac.uk}{mikhail.menshikov},\href{mailto:andrew.wade@durham.ac.uk}{andrew.wade}$\}$@durham.ac.uk}}
 \and Andrew R.\ Wade\footnotemark[2]}

\date{\today}
\maketitle

\begin{abstract}
Fix integers $d \geq 2$ and $k\geq d-1$.
Consider a random walk $X_0, X_1, \ldots$ in $\R^d$ in which, given $X_0, X_1, \ldots, X_n$ ($n \geq k$),
the next step $X_{n+1}$ is uniformly distributed on the unit ball centred at $X_n$, but
conditioned that the line segment from $X_n$ to $X_{n+1}$ intersects the convex hull
of $\{0, X_{n-k}, \ldots, X_n\}$ only at $X_n$.
  For $k = \infty$ this is a version of the
model introduced by Angel \emph{et al.}, which is conjectured to be ballistic,
i.e., to have a limiting speed and a limiting direction. We establish
ballisticity for the finite-$k$ model, and comment on some open
problems. In the case where $d=2$ and $k=1$, we obtain the limiting speed explicitly: it is
$8/(9\pi^2)$.
\end{abstract}

\medskip

\noindent
{\em Key words:}  Random walk; convex hull; rancher; self-avoiding; ballistic; speed.

\medskip

\noindent
{\em AMS Subject Classification:}  60K35 (Primary) 60G50, 52A22, 60F15 (Secondary).

\section{Introduction and main results}

Random walks in Euclidean space whose evolution depends not just upon their most recent state  
but upon their previous history have  recently attracted much interest. 
Such non-Markov processes arise naturally in systems where there is learning,
resource depletion, or physical interaction. For example,
 a roaming animal performing a random walk may tend to avoid previously visited regions
to access new resources~\cite[\S 4]{smouse}. Another major motivation is to provide models in polymer science, where linear chain molecules naturally appear both in collapsed and extended phases~\cite{bn}.

A broad class of models is provided by random walks (or diffusions) that interact with the occupation measure of
their  
past  trajectory. This interaction can be local, such for reinforced~\cite{pemantle} or excited random walks~\cite{BW03}, in which the walker's
motion is biased by its occupation measure in the immediate vicinity,
or global, such as for processes with self-interaction mediated via some
global functional of the past trajectory, such as a centre of mass or other occupation statistic~\cite{BFV10,cmvw,mta,nrw,toth, toth2, tothwerner}.
In either case, the self-interaction can be attractive, corresponding to the collapsed polymer phase,
or repulsive, corresponding to the extended phase. 
An important distinction exists between dynamic models, that are genuine stochastic processes,
and static models, such as the self-avoiding walk~\cite{MS}, which is the extreme case in which repulsion is total.

Locally self-repelling walks in continuous space also appear in queueing theory, as models of customer-server systems with greedy strategies:
customers arrive randomly in time and space and the server moves toward the closest customer between services. Questions of interest include stability when the space is the circle~\cite{RSTournier}, and transience and rate of escape on the line~\cite{FRSidoravicius, RollaS}. Analogues in discrete space are considered in~\cite{CW,KM97},
where it is shown that in different regimes the server's trajectories mimic either the self-avoiding or correlated random walk~\cite{cr}.

The inspiration for the work in the present paper originates with
a model of Angel~\emph{et al.}~in which the random walk is forbidden from entering the
\emph{convex hull} of its previous trajectory~\cite{ABV03,Zerner}.
This model, known as the \emph{rancher process}, is believed to be \emph{ballistic} (see below), but no proof of this exists at the moment.
A lattice-based model which shares some common features with the rancher process
is the \emph{prudent random walk}~\cite{BFV10} in which the walker avoids travelling in a direction towards a previously visited site. 
It is worth noting that the scaling limits of the prudent walk in its kinetic version~\cite{BFV10} and its static (uniform) version~\cite{BM10, PST17} are quite different.
In this paper we consider a 
variant of the rancher model in continuous space for which we can establish ballisticity.

 Let us describe our model,  a version of the rancher problem 
which retains memory only of a fixed number of its recent locations together with its initial point (the origin). Fix $d \geq 2$ (the ambient dimension)
and 
$k \in \N$ (the length of the memory of the walk) with $k \geq d-1$ (this condition rules out degenerate cases, as we explain below).
Our object of interest is the stochastic process
$X = (X_0, X_1, X_2, \ldots)$ in $\R^d$ 
where, roughly speaking, given $X_0, \ldots, X_n$, the next position
$X_{n+1}$ is uniformly distributed on the unit ball centred at $X_n$
but conditioned so that the line segment from $X_n$ to $X_{n+1}$ does not intersect the convex hull of $\{ 0, X_{n-k}, X_{n-k+1}, \ldots, X_n \}$
at any point other than $X_n$ (which is necessarily on the boundary of the hull).

To give the formal definition, 
we write $\conv \calX$ for the convex hull of a subset $\calX \subseteq \R^d$, that is, the smallest
convex set containing $\calX$.
Let $B(x;r)$ denote the closed Euclidean $d$-ball
centred at $x \in \R^d$ with radius $r>0$, and
for $x, y \in \R^d$ let
$(x,y] := \{  {\lambda x + (1-\lambda) y} : \lambda \in (0,1] \}$, which for $x \neq y$ is the line segment from $x \in \R^d$ to $y \in \R^d$ excluding $x$.
The set of \emph{admissible states} from $x \in \R^d$ with history $\calX \subseteq \R^d$ is 
\begin{align}
\label{eq:admissible1}
\calA(\calX ; x ) & := \cl \big\{ y \in  B ( x ; 1): ( x, y]  \cap 
\conv ( \calX \cup \{ 0, x\} ) = \emptyset \big\} ;
\end{align}
 where taking the closure (`$\cl$') is convenient for some measurability statements.

Let $\vol{d}$ denote Lebesgue measure on $\R^d$,
and set
$\calX_{n,k} := \{ X_j : \max(1,n-k) \leq j \leq n-1 \}$.
We define the law of $X$ by taking $X_0 =0$
and declaring that, for $n \in \ZP$,
\begin{equation}
\label{eq:transition-rule}
 \Pr ( X_{n+1} \in A \mid X_0, X_1, \ldots, X_n ) = \int_A p ( y \mid \calX_{n,k} ; X_n ) \ud y, \end{equation}
for all Borel sets $A \subseteq \R^d$, where $p$ is the transition density
defined for $y \in \R^d$ by 
\begin{equation} 
\label{def:walkk}
p(y \mid \calX ; x ) = \frac{1}{\vol{d} \calA( \calX  ; x)} \1_{\calA( \calX  ; x)}(y)
\end{equation} 
if $\vol{d} \calA( \calX  ; x) > 0$; i.e.,
 given $X_0, \ldots, X_n$, the next step $X_{n+1}$ is uniform on $\calA( \calX_{n,k} ; X_n)$.
We call $X$ the \emph{random walk with memory $k$}. 
The definition is analogous to the ones in \cite{ABV03, Zerner} for the `infinite memory' case.
See Figure~\ref{fig1} for an illustration in $d=2$.

Lemma~\ref{lem:well-defined} below shows that we only need to define $p(y \mid \calX  ; x)$ when
$\vol{d} \calA( \calX; x ) > 0$; hence the process $X_0, X_1, \ldots$ is well defined.

Note that we do not allow $k \leq d-2$. Indeed, in that case
$\conv ( \calX_{n,k} \cup \{ 0, X_n\} )$ has at most $d$ vertices,
so it is contained in a $(d-1)$-dimensional hyperplane,
and $\calA ( \calX_{n,k} ; X_n)$ is, up to a set of measure zero,
the whole of $B(X_n;1)$: the random walk has no interaction with its history, and has independent jumps. 

\begin{figure}[!h]
\centering
\includegraphics[width=0.2\textwidth]{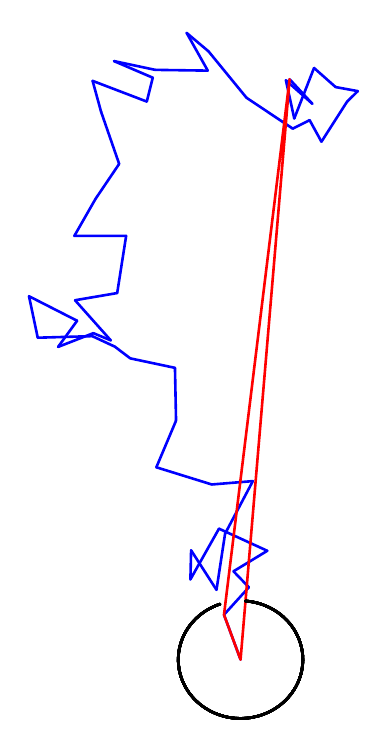}
\qquad\qquad
\includegraphics[width=0.33\textwidth]{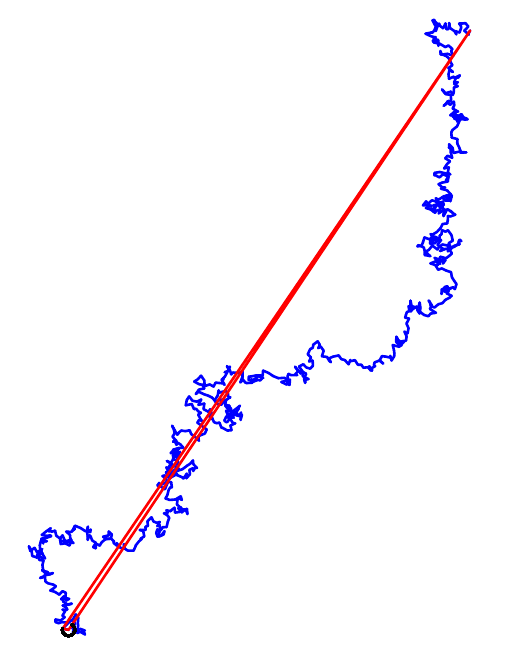}
\caption{Simulation of the $d=2$, $k=1$ process for $50$ steps (\emph{left})
and $1000$ steps (\emph{right}). The trajectory is in blue, the convex hull in red, and the black arc describes the disk sector on which the next position is distributed.}
\label{fig1}
\end{figure}

The main aim of this paper is to prove that this random walk is \emph{ballistic}, i.e.,
it has a positive asymptotic speed and a limiting direction. Here is the theorem.
Write $\| \, \cdot \, \|$ for the Euclidean norm on $\R^d$ and set
 $\Sp{d-1} := \{ u \in \R^d : \| u \| = 1 \}$.

\begin{theorem} 
\label{thm:speed}
There exist a positive
constant $v_{d,k}$ and a uniformly distributed $\ell \in \Sp{d-1}$ such that
\[   \lim_{n \to \infty} \frac{X_n}{n} = v_{d,k} {\ell}, \as, \text{ and hence } \lim_{n \to \infty} \frac{\Exp \| X_n\| }{n} = v_{d,k} .  \]
\end{theorem}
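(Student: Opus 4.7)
The plan is to analyse the walk in a frame rotating with $\hat X_n := X_n / \|X_n\|$, in which the dynamics should become, asymptotically, a uniformly ergodic Markov chain on a compact shape space; an SLLN for this chain will deliver ballisticity.

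Two preliminary properties will be needed. First, \emph{rotational symmetry}: since $X_0 = \0$ and both~\eqref{eq:admissible1} and~\eqref{def:walkk} are $O(d)$-equivariant in their arguments, the law of $(X_n)_{n\geq 0}$ is $O(d)$-invariant, so any a.s.\ limit of $X_n/n$ of the form $v\ell$ with $v>0$ deterministic and $\ell$ a random unit vector must have $\ell$ uniformly distributed on $\Sp{d-1}$. Second, \emph{radial escape}: the origin always lies in the hull, so the direction $-\hat X_n$ always lies in the tangent cone at $X_n$; removing from $B(X_n;1)$ any convex cone containing $-\hat X_n$ (with uniformly nontrivial solid angle, thanks to the presence of the further past positions combined with the fact that we only need a lower bound averaged over configurations) should give a uniform positive lower bound for $\Exp[(X_{n+1}-X_n)\cdot \hat X_n \mid \calF_n]$ once $\|X_n\|$ is large, and a Foster/semimartingale argument using $\|X_{n+1}-X_n\|\leq 1$ then yields $\liminf_{n\to\infty} \|X_n\|/n > 0$ a.s.

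Next I would set up the shape Markov chain. Pick measurable rotations $R_n \in O(d)$ with $R_n \hat X_n = \be$ (a fixed reference direction, working modulo the stabilizer $O(d-1)$), and define $\Theta_n := \bigl( R_n (X_{n-i} - X_n) \bigr)_{i=1}^{k} \in K$, where $K \subset (\R^d)^k$ is compact because $\|X_{n-i} - X_n\| \leq i$. The pair $(\Theta_n, \|X_n\|)$ is Markov, but its kernel depends on $\|X_n\|$ only through the exact direction of $\0$ as seen from $X_n$, which is $-\be$ up to an $O(\|X_n\|^{-1})$ perturbation; so the kernel should converge uniformly on $K$ to an autonomous Markov kernel $\widehat P$ obtained by placing $\0$ at $-\infty\cdot\be$. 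Since the finite memory is completely refreshed every $k$ steps, a geometric argument ought to produce a Doeblin minorisation of $\widehat P^k$ uniformly on $K$, so that $\widehat P$ is uniformly ergodic with a unique invariant law $\pi$; the $O(d-1)$-symmetry of the construction about $\be$ forces $\pi$ to be $O(d-1)$-invariant.

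To conclude, I would couple $(\Theta_n)$ with a stationary $\widehat P$-chain, the geometric ergodicity of $\widehat P$ absorbing the $O(1/n)$ kernel discrepancy afforded by the radial bound, to obtain the SLLN
\begin{equation*}
\frac{1}{n} \sum_{i=0}^{n-1} R_i (X_{i+1} - X_i) \toas v_{d,k} \, \be,
\end{equation*}
where $v_{d,k}\be$ is the mean one-step increment in the rotated frame under $\pi$; the limit is parallel to $\be$ by $O(d-1)$-invariance of $\pi$, and $v_{d,k} > 0$ by the radial bound. The angular increments satisfy $\|\hat X_{n+1} - \hat X_n\| = O(\|X_n\|^{-1}) = O(1/n)$; in the stationary regime their tangential parts should form an approximate martingale with square-summable quadratic variation, giving $\hat X_n \toas \ell$ for some $\ell \in \Sp{d-1}$, which must be uniform on the sphere by the symmetry observation. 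Rotating back, $X_n/n = (\|X_n\|/n)\hat X_n \toas v_{d,k} \ell$. The hard part will be establishing the Doeblin minorisation of $\widehat P^k$ uniformly over $K$: when past points cluster near the boundary of the tangent cone the admissible set can degenerate in shape, and one must nevertheless exhibit a $k$-step transition whose density is uniformly bounded below on a fixed reference set, which requires a careful geometric argument across all configurations.
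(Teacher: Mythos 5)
Your overall architecture (pass to a limiting homogeneous dynamics in which the origin is sent to infinity in the direction $-\be$, prove an ergodic theorem for the resulting shape dynamics, and use $O(d)$-invariance to pin down the limit direction as uniform) parallels the paper's Sections~\ref{sec:homogeneous}--\ref{sec:coupling} in spirit, but two of your load-bearing steps are incorrect or missing as written. First, the claim of a uniform positive one-step radial drift once $\|X_n\|$ is large is false: already for $d=2$, $k=1$ the conditional drift is essentially $2\sin\theta_n/(6\pi-3\theta_n)$ (cf.\ Lemma~\ref{lem:local-drift}), which is arbitrarily small when the interior angle $\theta_n$ is near $0$ or $\pi$, no matter how far $X_n$ is from $\0$. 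The hedge ``averaged over configurations'' cannot rescue this at this stage: the relevant average is over the stationary shape law, whose relevance to the actual trajectory is exactly what you have not yet proved, while your later $O(1/n)$ kernel-perturbation estimates presuppose $\liminf_{n}n^{-1}\|X_n\|>0$ --- this is circular. The paper's fix is a genuinely multi-step construction (Lemmas~\ref{lem:fact0}--\ref{lem:sufficient-for-gg} and the approximate-simplex argument of Proposition~\ref{prop:speed}) giving uniform drift over $2k+d+1$ steps from \emph{every} configuration, and something of this kind must be supplied before any ergodic machinery can start. Relatedly, your Doeblin minorisation cannot hold for $\widehat P^{k}$ from arbitrary shapes: a recent point lying ahead in the direction $\be$ blocks any fixed reference chain, so one needs of order $2k$ steps --- first escaping along a configuration-dependent tangent direction, then realigning radially --- which is precisely the content of those lemmas.

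Second, the direction argument is underpowered. Conditionally on the shape, the one-step tangential drift does \emph{not} vanish; it vanishes only on average under the $O(d-1)$-invariant stationary law. Hence $\hat X_n$ is not an approximate martingale with square-summable quadratic variation: its increments have conditional means of order $1/n$ with shape-dependent sign, and $\sum_n 1/n=\infty$, so convergence of $\hat X_n$ requires exploiting cancellation, either via a Poisson-equation/mixing decomposition for the shape chain or via a regeneration argument producing blocks over which the conditional transverse drift genuinely (asymptotically) vanishes. The paper takes the latter route: the renewal construction together with the coupling to the homogeneous process yields $\Exp[W_{n+1}-W_n\mid\calF'_{\tau_n+1}]\approx u_{d,k}\hat W_n$ with error $o(n^{-\gamma})$, so that Lemma~\ref{lem:angles} applies with a summable transverse term. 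The same caveat applies to your SLLN for $\tfrac1n\sum_i R_i(X_{i+1}-X_i)$: a strong law for an additive functional of a uniformly ergodic chain under an $O(1/n)$ kernel perturbation is plausible but not automatic, and making it rigorous is essentially the work done by the paper's renewal-plus-coupling sections. In short, the skeleton is a legitimate alternative organisation, but the uniform multi-step drift, the correct minorisation, and the centring of the tangential drift are genuine gaps that need the kind of geometric and regeneration arguments the paper develops.
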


Note that including the origin in the definition of the convex hull to be
  avoided at each step is crucial;
if the process instead just avoids the convex hull generated by its most recent $k$ steps, then it will be diffusive,
like the Gillis--Domb--Fisher `correlated random walk'~\cite{cr} that is repelled by its immediate past but effectively has zero drift
over long time scales. Another model whose dynamics, like ours, are influenced by both
its very distant and very recent past was considered recently by Gut and Stadtm\"uller~\cite{gs} and is
a variant of the `elephant random walk'~\cite{bb,bercu}. For their model on $\Z$, Gut and Stadtm\"uller
obtain a ballisticty result reminiscent of Theorem~\ref{thm:speed}: see Theorem~10.1 of~\cite{gs}.

The constants $v_{d,k}$ in Theorem~\ref{thm:speed} are characterized in~\eqref{eq:v} below, but seem hard to evaluate in general. 
It is obvious that $v_{d,k} \leq 1$,
and we show (cf.~Corollary~\ref{cor:exponential-bound}) that $v_{d,k} >0$. It is likely that
 one can show that $v_{d,k} \geq v >0$ for all $d,k$, perhaps by adapting the arguments of~\cite{Zerner};
this fact would also follow from Conjecture~\ref{conj1} below.
We can compute $v_{d,k}$ explicitly in one particular case.

\begin{theorem}
\label{thm:memory-one}
If $d=2$ and $k=1$, then
\[
 v_{2,1} = \frac{8}{9\pi^2} \approx 0.09006327 . \]
\end{theorem}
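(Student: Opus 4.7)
The idea is to identify and analyse an ergodic one-dimensional Markov chain describing the local geometry of the walk in the ballistic regime, and express $v_{2,1}$ as a stationary average. Let $\alpha_n\in(0,\pi)$ be the interior angle at $X_n$ of the triangle $\{0,X_{n-1},X_n\}$. By Theorem~\ref{thm:speed}, $\|X_n\|\to\infty$ and $X_n/\|X_n\|\to\ell$ a.s., so up to an angular error of order $\|X_n\|^{-1}=O(n^{-1})$ the direction from $X_n$ to $0$ is $-\ell$ and the admissible set $\calA(\{X_{n-1}\};X_n)$ is the closed unit disk at $X_n$ minus a circular sector of opening $\alpha_n$. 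Polar decomposition of the uniform law on this complementary sector at $X_n$ shows that the step angle $\psi$ (measured from $+\ell$, with a sign convention recording on which side of the line $X_n\to 0$ the point $X_{n-1}$ lies) is uniform on an arc of length $2\pi-\alpha_n$, independent of the step length $R$ which has density $2r$ on $(0,1)$; hence $\Exp[R]=\tfrac{2}{3}$.

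In the idealised dynamics where $0$ is replaced by the far-field direction $-\ell$, the new angle is $\alpha_{n+1}=|\psi|$, so $(\alpha_n)$ is a Markov chain on $(0,\pi)$ with transition density
\[
 q(a'\mid a)=\frac{2}{2\pi-a}\,\1\{0<a'<\pi-a\}+\frac{1}{2\pi-a}\,\1\{\pi-a<a'<\pi\}.
\]
Direct substitution verifies that $\pi_*(a)=\tfrac{2(2\pi-a)}{3\pi^2}$ is a stationary density:
\[
\int_0^{\pi-a'}\!\frac{2\pi_*(a)}{2\pi-a}\,da+\int_{\pi-a'}^{\pi}\!\frac{\pi_*(a)}{2\pi-a}\,da=\tfrac{2}{3\pi^2}\bigl[2(\pi-a')+a'\bigr]=\pi_*(a').
\]
Since $q$ admits a positive Doeblin minorant on $(0,\pi)$, the chain is uniformly ergodic with stationary law $\pi_*$.

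Using $\|X_n\|-\|X_{n-1}\|=(X_n-X_{n-1})\cdot X_{n-1}/\|X_{n-1}\|+O(\|X_{n-1}\|^{-1})$ one has $v_{2,1}=\lim n^{-1}\sum_{j=1}^n(X_j-X_{j-1})\cdot X_{j-1}/\|X_{j-1}\|$. In the idealised chain, the conditional mean of the $j$-th summand given $\alpha_j=a$ is
\[
\Exp[R]\cdot\Exp[\cos\psi\mid\alpha=a]=\tfrac{2}{3}\cdot\frac{1}{2\pi-a}\int_{-\pi}^{\pi-a}\cos\psi\,d\psi=\frac{2\sin a}{3(2\pi-a)}.
\]
Averaging against the stationary density $\pi_*$,
\[
v_{2,1}=\int_0^\pi\frac{2\sin a}{3(2\pi-a)}\cdot\frac{2(2\pi-a)}{3\pi^2}\,da=\frac{4}{9\pi^2}\int_0^\pi\sin a\,da=\frac{8}{9\pi^2}.
\]

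The principal obstacle is rigorously justifying the replacement of the true non-Markovian sequence $(\alpha_n)$ by the idealised chain $q$: the true dynamics use the exact direction from $X_n$ to $0$, which deviates from $-\ell$ by an angle of order $1/n$. A clean strategy is to couple the true $(\alpha_n)$ with a stationary copy of $q$ so that the per-step coupling discrepancy is of order $1/n$; geometric ergodicity of $q$ then absorbs these errors and delivers convergence of the Ces\`aro averages. An alternative is to build on the regenerative structure underlying the proof of Theorem~\ref{thm:speed}, writing $v_{2,1}$ as a ratio of expected displacement to expected regeneration time in a single excursion and computing each factor via $\pi_*$.
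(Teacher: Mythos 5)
Your proposal follows essentially the same route as the paper's Section~\ref{sec:memory-1}: the interior angle at $X_n$ (your $\alpha_n$ is the paper's $\theta_n$), the one-step map $\theta\mapsto|(2\pi-\theta)U-\pi|$ up to an error of order $\|X_n\|^{-1}$, the stationary density $\tfrac{2}{3\pi^2}(2\pi-t)$ on $[0,\pi]$ verified by direct substitution, the exact local drift $\tfrac{2\sin a}{6\pi-3a}$ (the paper's Lemma~\ref{lem:local-drift}), and the conversion of the limiting local drift into $v_{2,1}$ via Theorem~\ref{thm:speed} together with the Ces\`aro/expansion bookkeeping that the paper packages as Lemma~\ref{lem:norms}. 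The one step you leave as a plan rather than a proof --- justifying the replacement of the true, non-Markovian angle sequence by the idealised chain --- is exactly the content of the paper's Lemmas~\ref{lem:ato0} and~\ref{lem:theta-convergence}: one writes the exact recursion $\theta_{n+1}=|(2\pi-\theta_n)U_{n+1}-\pi|-\alpha_{n+1}$ with $0\le\alpha_{n+1}\le C(1+\|X_n\|)^{-1}\to0$ a.s.\ (using Corollary~\ref{cor:exponential-bound}), and combines the fact that $x\mapsto|(2\pi-x)u-\pi|$ is $1$-Lipschitz in $x$ with the Doeblin minorant $Q(x,\ud y)\ge\tfrac{1}{2\pi}\,\ud y$ to get $\theta_n\tod\theta$ with the stated density; bounded convergence then finishes the computation. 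So your first sketched strategy does work, essentially as you describe, but be aware of one point in executing it: a coupling over the whole trajectory with per-step discrepancy $O(1/n)$ accumulates a non-summable error, so one should (as the paper does) compare the true sequence with the exact chain restarted at time $k$ over a window of fixed length $m$, sending $k\to\infty$ first (so the accumulated error $\sum_{j=k}^{k+m}\alpha_j\to0$) and then $m\to\infty$ using the uniform ergodicity; alternatively your geometric-forgetting heuristic can be made rigorous, but it needs this two-parameter structure spelled out.
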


Simulations suggest the following.

\begin{conjecture}
\label{conj1}
We have $v_{d,k} \leq v_{d,k+1}$ for all $k \geq d-1$.
\end{conjecture}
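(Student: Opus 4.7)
The guiding observation is a local monotonicity of admissible sets. For any common history $X_0,\ldots,X_n$ one has
\[
\conv\bigl(\{0,X_{n-k},\ldots,X_n\}\bigr) \;\subseteq\; \conv\bigl(\{0,X_{n-k-1},\ldots,X_n\}\bigr),
\]
and consequently $\calA(\calX_{n,k+1};X_n) \subseteq \calA(\calX_{n,k};X_n)$. The extra convex-hull constraint imposed by the $(k+1)$-walk carves away precisely the portion of the unit ball lying in the direction of the additional remembered site $X_{n-k-1}$, which typically sits behind the walker; the $(k+1)$-walker's next step is therefore biased more strongly away from the origin. The conjecture asserts that this local bias survives in the long-run average speed.

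The first natural attempt is a synchronous coupling. Having run both walks to time $n$ with identical histories, one samples the $k$-memory step uniformly on $\calA(\calX_{n,k};X_n)$; with conditional probability $\vol{d}\calA(\calX_{n,k+1};X_n)/\vol{d}\calA(\calX_{n,k};X_n)$ the same draw lies in the smaller admissible set and can serve as the $(k+1)$-step, preserving a common history; otherwise the $(k+1)$-walk resamples in its smaller set and the trajectories split. One would hope to maintain throughout time some stochastic dominance such as $\ell\cdot X_n^{(k+1)} \geq \ell\cdot X_n^{(k)}$, where $\ell$ is the common asymptotic direction from Theorem~\ref{thm:speed}, and then let $n\to\infty$ to obtain $v_{d,k+1}\geq v_{d,k}$.

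An alternative, and arguably cleaner, route is via the invariant measure underlying Theorem~\ref{thm:speed}. Writing $v_{d,k} = \Exp_{\pi_k}[\ell \cdot (X_{n+1}-X_n)]$ for a stationary law $\pi_k$ on the space of relative past offsets $(X_n-X_{n-j})_{1\le j\le k}$ together with the direction $X_n/\|X_n\|$, one could try to project the $(k+1)$-stationary law to the coarser $k$-state space by forgetting the oldest offset and compare the projection to $\pi_k$. The nested-admissible-set inclusion gives, conditional on matching relative configurations, a one-step forward-stochastic-domination of the $(k+1)$-increment over the $k$-increment in the direction $\ell$; integrating this against the projected invariant law would yield the conjecture, \emph{provided} the domination survives integration.

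The principal obstacle in both approaches is the passage from local to global comparison. Once the two coupled trajectories diverge their admissible sets live in disjoint unit balls with different convex hulls, so the clean nesting disappears and the $(k+1)$-walk's more aggressive outward motion may alter its future convex hulls in ways that make subsequent admissible sets larger or smaller than those of the $k$-walk depending on the configuration. Correspondingly, the invariant measures $\pi_k$ and (the projection of) $\pi_{k+1}$ are fixed points of different non-Markov dynamics, and establishing a stochastic order between them seems to require a genuine monotonicity of the whole trajectory, not just of one-step transitions. A plausible refinement is an interpolation argument, in which the extra convex-hull constraint involving $X_{n-k-1}$ is imposed only with probability $\lambda\in[0,1]$ and one seeks to differentiate the resulting speed with respect to $\lambda$; but controlling the sign of this derivative is precisely the crux and is far from routine, which is presumably why the statement remains a conjecture.
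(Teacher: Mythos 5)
The statement you are addressing is stated in the paper as a \emph{conjecture}: the authors offer no proof, only the remark that an obvious one-step coupling of the $k$- and $(k+1)$-processes from a common configuration exists, but that extending it to a coupling of the whole trajectories ``seems difficult''. Your proposal does not close this gap, and indeed you say so yourself: both of your routes (the synchronous coupling with hoped-for dominance of $\ell\cdot X_n$, and the comparison of stationary laws of relative configurations) stop exactly at the step that is the actual content of the conjecture, namely converting a local, one-step nesting of admissible sets into a global comparison of asymptotic speeds. A plan that identifies the obstacle is not a proof, so what you have written cannot be accepted as establishing $v_{d,k}\le v_{d,k+1}$.

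Two further concrete weaknesses are worth flagging. First, even your local claim is not rigorous: from $\calA(\calX_{n,k+1};X_n)\subseteq\calA(\calX_{n,k};X_n)$ it does not follow that the uniform law on the smaller set has larger mean radial increment; that would require the excised region to have below-average radial component, and the heuristic that $X_{n-k-1}$ ``typically sits behind the walker'' is exactly the kind of statement about typical configurations (i.e.\ about the unknown stationary law of relative offsets) that you would need the conjecture's machinery to justify. The paper's Proposition~\ref{prop:centrifugal} gives only non-negativity of the radial drift for a fixed memory, not a comparison across memories. Second, the speed in the paper is not a simple average of one-step radial drifts: by~\eqref{eq:v}, $v_{d,k}=u_{d,k}/(k\lambda_{d,k})$ is a ratio of a per-renewal displacement to an expected renewal duration, and both $u_{d,k}$ and $\lambda_{d,k}$ change with $k$ (the renewal blocks themselves have length proportional to $k$). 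So even a genuine stochastic domination of increments under matched configurations would not immediately order the speeds without simultaneously controlling the renewal-time normalisation. Your interpolation-in-$\lambda$ idea is a reasonable research direction, but as you acknowledge, the sign of the derivative is precisely the open problem; nothing in your write-up advances beyond the paper's own assessment that the statement remains open.
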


It is natural to seek a coupling to establish Conjecture~\ref{conj1}. There is an obvious coupling
of one step of the $k$ and $k+1$ processes started from a common configuration, but extending this to a process coupling seems difficult. 

 The inspiration for considering our model comes
from the case of \emph{infinite memory},
when the walk avoids its entire convex hull
$\conv \{ X_{0}, X_{1},\ldots, X_n \}$. This  `$k=\infty$' walk is a variant of the model introduced by Angel \emph{et al.}\ in~\cite{ABV03},
in which the increments are uniform on the unit \emph{sphere}
(rather than the unit ball)
excluding the convex hull;
for the $d=2$ case of that model, Zerner~\cite{Zerner} showed that $\liminf_{n \to \infty} n^{-1} \| X_n \| >0$ a.s.
Just as for the model in~\cite{ABV03}, one conjectures that the $k=\infty$ walk that avoids its entire convex hull is ballistic
(cf.~Conjectures~1 and~5 in \cite{ABV03});
in particular, one expects that $\lim_{n \to \infty} n^{-1} \| X_n \| = v_{d,\infty}$ exists.
Our Conjecture~\ref{conj1} would imply that $\lim_{k \to \infty} v_{d,k}$ exists. It is then tempting to propose the following.
 
\begin{conjecture}
\label{conj2}
We have $\lim_{k \to \infty} v_{d,k} = v_{d,\infty}$.
\end{conjecture}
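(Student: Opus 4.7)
The plan is to attack Conjecture~\ref{conj2} via a coupling between the $k$-memory walk and the $\infty$-memory walk (the rancher-type variant that at each step avoids $\conv\{0, X_0, \ldots, X_n\}$). A prerequisite is that $v_{d,\infty}$ actually be defined: this requires a proof of ballisticity for the $\infty$-memory walk, the open problem going back to~\cite{ABV03,Zerner}. Granting the existence of a speed $v_{d,\infty}$ and a limiting direction $\ell^{(\infty)} \in \Sp{d-1}$, the task reduces to showing $\lim_{k \to \infty} v_{d,k} = v_{d,\infty}$.

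First I would build $X^{(k)}$ and $X^{(\infty)}$ on a common probability space driven by i.i.d.\ $\unif(B(0;1))$ proposals together with independent rejection randomness, so that $X_{j+1}^{(k)}$ and $X_{j+1}^{(\infty)}$ accept the same proposal whenever their admissible sets \eqref{eq:admissible1} happen to coincide. An easy induction shows that $X_j^{(k)} = X_j^{(\infty)}$ through time $n$ provided, for every $j \leq n$, the local hulls
\[
\conv\bigl( \calX_{j,k} \cup \{0, X_j\} \bigr) \cap B(X_j;1)
\quad \text{and} \quad
\conv\{0, X_0, \ldots, X_j\} \cap B(X_j;1)
\]
agree --- equivalently, no vertex $X_i$ with $i < j-k$ supports a face of the global hull meeting $B(X_j;1)$.

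Second I would prove a geometric lemma: under ballisticity of $X^{(\infty)}$, the two local hulls agree at time $j$ with probability tending to one as $k \to \infty$, uniformly in (or at least on average over) $j$. The heuristic is that the vertices $\{X_i : i < j-k\}$ lie at distance of order at least $v_{d,\infty} k$ behind $X_j$ along $\ell^{(\infty)}$, so for any one of them to support a face of the global hull intersecting $B(X_j;1)$ the walk must have executed a substantial backtracking excursion --- an event whose probability should decay (ideally exponentially) in $k$. The required escape-type estimate would be analogous to the one behind Corollary~\ref{cor:exponential-bound} for the finite-memory model, but transferred to the infinite-memory setting.

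Finally, combining the coupling with the geometric lemma should yield $\Exp \| X_n^{(k)} - X_n^{(\infty)} \| = o(n)$ for fixed large $k$, and hence $|v_{d,k} - v_{d,\infty}| \to 0$ as $k \to \infty$. The main obstacle, and the reason Conjecture~\ref{conj2} is not a theorem, is twofold: the very existence of $v_{d,\infty}$ is still open, and even granting strong ballisticity of $X^{(\infty)}$, controlling the visibility of ancient vertices of the global hull from near the current position requires a fine understanding of the hull's transverse fluctuations. Here the uniformity of the limiting direction in Theorem~\ref{thm:speed} (a rotational-symmetry feature one hopes persists for $k = \infty$) should be helpful, since after conditioning on $\ell^{(\infty)}$ the geometry reduces to that of a tube around a ray from the origin, which is where the quantitative estimates would be most tractable.
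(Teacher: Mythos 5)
The statement you are addressing is Conjecture~\ref{conj2}: the paper offers no proof of it (it is supported there only by simulations), so there is no argument of the authors to compare against, and your text should be judged as a proposed route to an open problem. As such, it is a strategy sketch rather than a proof, and it has genuine gaps beyond the two you acknowledge (the unproven existence of $v_{d,\infty}$ and the missing backtracking/visibility estimate for the infinite-memory walk, for which nothing like Corollary~\ref{cor:exponential-bound} is currently available --- only Zerner's $\liminf$ bound in $d=2$ for the sphere-increment model). The coupling set-up itself is sound: since every admissible step lies in $B(X_j;1)$, the set $\calA(\calX;x)$ indeed depends only on the hull intersected with $B(x;1)$, so the two walks move identically as long as the local and global hulls agree inside that ball.

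The unacknowledged gap is in the final step. Even granting a per-step decoupling probability $\eps_k$ that is small uniformly in $j$, the conclusion $\Exp\|X_n^{(k)}-X_n^{(\infty)}\|=o(n)$ for fixed large $k$ does not follow: once the two walks first disagree, at a time $T$ which for fixed $k$ is a.s.\ finite (its tail is roughly $(1-\eps_k)^n$, not summable in a way that keeps $T=\Omega(n)$), they evolve with no further relation and can separate at linear rate --- indeed in different limiting directions, since the direction $\ell$ is random. So the positions differ by order $n$, not $o(n)$, and the speed comparison collapses. To rescue the scheme one would need either a mechanism to re-couple after disagreement (a joint renewal structure for the $k$- and $\infty$-walks, which presupposes exactly the regeneration theory for the infinite-memory walk that is missing), or one should abandon comparison of positions and instead compare rotation-invariant quantities such as the radial drift and expected inter-renewal time at renewal epochs, in the spirit of Section~\ref{sec:coupling}, where the quantities $u_{d,k}/(k\lambda_{d,k})$ characterise $v_{d,k}$ via~\eqref{eq:v}; but again the $k=\infty$ analogue of that renewal structure is precisely what is not known. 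In short, your outline correctly identifies the main known obstacles but also relies on a position-coupling step that would fail as stated, so the conjecture remains open on this route as well.
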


Simulations  are reasonably consistent with Conjecture~\ref{conj2}, but not entirely convincing.
Another open problem concerns the second-order behaviour of $X_n$ in the finite-memory model:
we expect that $n^{-1/2} ( X_n - v_{d,k} n \hat X_n )$ converges to a non-degenerate normal
distribution; this is to be contrasted with the conjectured $n^{3/4}$-order fluctuations (in $d=2$) for the $k=\infty$ model~\cite{ABV03}. It is also open to prove ballisticity for the version of the finite-memory model
($X_n'$, say)
in which the increments are supported on a sphere rather than a ball: 
our proof (particularly the renewal construction in Section~\ref{sec:renewal}) uses the fact that
the increments have a density in $\R^d$. In the case $d=2$, $k=1$ of this variant of the model, 
the argument
of Section~\ref{sec:memory-1} goes through with minor modifications to show that
\[ \lim_{n\to\infty} n^{-1} \Exp \| X'_n \| = \frac{4}{3\pi^2} \approx 0.13509491.\]

The plan of the paper is as follows. In Section~\ref{sec:prelim}
we collect some initial observations, which include a
description of the process via a $(k+1)$-component Markov chain
and the fact that there is a uniformly positive radial drift for the process
over a finite number of steps, which entails a $\liminf$-speed bound.
The core of our proof of ballisticity is a renewal structure described in Section~\ref{sec:renewal},
which identifies events that occur frequently and between any two of which the process
has uniformly positive radial drift and has symmetric transverse increments. This is essentially already
enough to prove a limiting direction, but to identify a limiting speed it is necessary to show that 
the radial drift between renewals has a limit, and that the expected time between renewals also has a limit.
We establish these limiting statements via a coupling argument to a 
 variant
of the process which is spatially homogeneous. The homogeneous process is introduced in Section~\ref{sec:homogeneous},
and the coupling argument is presented in Section~\ref{sec:coupling}. This completes the proof of Theorem~\ref{thm:speed}.
The proof of Theorem~\ref{thm:memory-one} proceeds via an essentially self-contained argument in Section~\ref{sec:memory-1}, which shows that $n^{-1} \Exp \| X_n \|$ has the specified limit.
The argument goes by showing that the global speed is asymptotically equal to the local drift, and  the local drift is evaluated as 
an average with respect to the limit distribution of the interior angle of the convex hull;
the limit distribution of the angle is identified in 
Lemma~\ref{lem:theta-convergence} as the limit of 
the stochastic recursive sequence 
\[ \theta_{n+1}= \big\vert (2 \pi-\theta_n) U_{n+1}-\pi \big\vert \]
 taking values in $[0,\pi]$, where $U_1, U_2, \ldots$ are i.i.d.~uniform.
The technical results required to deduce limiting speed and direction from statements about increments are collected in the Appendix.

\section{Preliminaries}
\label{sec:prelim}

For any finite non-empty $\calX \subseteq \R^d$ and 
any $x \in \R^d$, let
\[ \cone(x;\calX) := \conv \{ x + \alpha (y-x ) : \alpha \geq 0, \, y \in \calX \} . \]
Excluding the degenerate case $\cone( x; \{ x \} ) = \{ x \}$, 
$\cone(x;\calX)$ is the convex hull of finitely many closed rays emanating from $x$,
and, if $x$ is not in the interior of $\conv \calX$, then $\cone(x;\calX)$ is the smallest closed convex cone with vertex $x$ containing the set $\calX$
(equivalently, $\conv \calX$). It is not hard to see that~\eqref{eq:admissible1} is equivalent to
\begin{equation}
\label{eq:admissible2}
\calA(\calX ; x ) = \cl \bigl( B(x ; 1) \setminus\cone (x ; \calX \cup \{0,x\}) \bigr),
\end{equation}
which is a form that will be useful later on.

Our first result of this section shows that our process is well defined.
Here and subsequently, $\nu_d := \vol{d} B(0;1)$ is the volume of the unit-radius
$d$-ball.

\begin{lemma}
\label{lem:well-defined}
The process $X_0, X_1, X_2, \ldots$ is well defined, and for all $n \in \ZP$, a.s.,
\begin{align}
\label{eq:A-bounds} 
   \frac{\nu_d}{2} \leq 
	\vol{d} \calA ( \calX_{n,k} ; X_n)  \leq \nu_d .
\end{align}
\end{lemma}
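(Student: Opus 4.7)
The plan is to use the alternative description of the admissible set from~\eqref{eq:admissible2} and reduce the volume estimate to a statement about cones, then establish that the relevant cone is proper by induction.

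First, the upper bound in~\eqref{eq:A-bounds} is immediate from the inclusion $\calA(\calX_{n,k}; X_n) \subseteq B(X_n;1)$. For the lower bound, I would use~\eqref{eq:admissible2} to write
\[ \vol{d} \calA(\calX_{n,k};X_n) = \vol{d} B(X_n;1) - \vol{d}\bigl( B(X_n;1) \cap \cone(X_n;\calX_{n,k}\cup\{0,X_n\}) \bigr), \]
so it suffices to show that the cone on the right is contained in a closed half-space with bounding hyperplane through $X_n$; for then the intersection with the unit ball centred at $X_n$ has volume at most $\nu_d/2$, yielding the claim. Any closed convex cone $K$ with apex $x$ that is strictly contained in $\R^d$ admits a supporting hyperplane through $x$ by the separation theorem, so $K \cap B(x;1)$ has volume at most $\nu_d/2$; I would state this as a short preliminary observation.

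The core of the argument is therefore to show, by induction on $n$, that (i) the process is defined up to time $n$, and (ii) $X_n$ lies on the boundary of (equivalently, is not an interior point of) $\conv(\calX_{n,k}\cup\{0, X_n\})$, so that the associated cone is a proper closed convex cone. The base case $n=0$ is immediate since $X_0 = 0$, $\calX_{0,k} = \emptyset$, and $\cone(0;\{0\}) = \{0\}$. For the inductive step, the transition rule~\eqref{eq:transition-rule}--\eqref{def:walkk} together with the constraint $(X_n, X_{n+1}] \cap \conv(\calX_{n,k}\cup\{0,X_n\}) = \emptyset$ and the fact that a.s.\ $X_{n+1}\neq X_n$ imply that $X_{n+1} \notin \conv(\calX_{n,k}\cup\{0,X_n\})$. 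Since $\calX_{n+1,k} \cup \{0\} \subseteq \calX_{n,k}\cup\{0,X_n\}$ (one may lose the oldest point $X_{n-k}$, and gain $X_n$, both of which are on the right), it follows that $X_{n+1} \notin \conv(\calX_{n+1,k}\cup\{0\})$. Hence $X_{n+1}$ is an extreme point of $\conv(\calX_{n+1,k}\cup\{0,X_{n+1}\})$ and thus lies on its boundary.

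Combining this with the cone observation, the lower bound $\vol{d} \calA(\calX_{n+1,k};X_{n+1}) \geq \nu_d/2$ holds, and in particular $\vol{d} \calA(\calX_{n+1,k};X_{n+1}) > 0$, so the transition density $p(\, \cdot \mid \calX_{n+1,k};X_{n+1})$ in~\eqref{def:walkk} is well defined and the process extends to time $n+1$. The main technical point to verify carefully is the measurability of $\calA(\calX;x)$ as a set-valued function of $(\calX,x)$ (needed so that~\eqref{eq:transition-rule} defines a valid transition kernel); this will follow routinely from the explicit form~\eqref{eq:admissible2} and continuity of the convex-hull map in the Hausdorff metric on finite configurations. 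The combinatorial identification of $X_{n+1}$ as an extreme point of the new hull is the step that uses the finite-memory structure in an essential way and is the only place where one must argue carefully.
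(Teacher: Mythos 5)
Your proposal is correct and follows essentially the same route as the paper's proof: induction on $n$, with the key observation that a.s.\ the new point lies outside (hence is extremal for, or at least on the boundary of) the updated hull $\conv(\calX_{n+1,k}\cup\{0,X_{n+1}\})$, so a supporting hyperplane through $X_{n+1}$ exists and at least half of $B(X_{n+1};1)$ is admissible. The only cosmetic difference is that you bound the excluded set $B(X_{n+1};1)\cap\cone(X_{n+1};\cdot)$ by a half-ball via~\eqref{eq:admissible2}, whereas the paper directly notes that the opposite half-ball is contained in $\calA(\calX_{n+1,k};X_{n+1})$ --- the same geometric fact.
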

\begin{proof}
The proof goes by induction. 
Starting from  $X_0=0$
 we have that $\calA  ( \calX_{0,k} ; X_0 ) = \calA  ( \emptyset ; 0 ) = B (0 ; 1)$ by~\eqref{eq:admissible1} or~\eqref{eq:admissible2}.
Hence $\vol{d} \calA  ( \emptyset ; 0 ) = \nu_d$,
so~\eqref{eq:A-bounds} holds with $n=0$.
For the inductive step, suppose that 
the law of $X_0, X_1, \ldots, X_{m}$
is well defined, and that~\eqref{eq:A-bounds} 
holds for all $0 \leq n \leq m$.
Then the transition density $p(y \mid \calX_{n,k} ; X_n)$
at~\eqref{def:walkk} is well-defined for $n=m$, and so we can generate $X_{m+1}$ according to~\eqref{eq:transition-rule}.
Thus $X_0, X_1, \ldots, X_{m+1}$ is well defined: see Figure~\ref{fig:half-disc} for an example.
Moreover, the upper bound on $\vol{d} \calA  ( \calX_{m+1,k} ; X_{m+1})$
is trivial. Also, by construction, $X_{m+1}$  is not in the interior of the previous convex hull $\conv (\calX_{m,k} \cup \{ 0, X_{m} \} )$, and so $X_{m+1}$ is extremal 
for $\conv ( \calX_{m+1,k} \cup \{ 0, X_{m+1} \})$. Hence there exists a tangent hyperplane at $X_{m+1}$ to the convex hull, and the opposite half of the ball $B(X_{m+1};1)$ is contained in  
$\calA ( \calX_{m+1,k} ; X_{m+1} )$. 
Hence the latter set has volume at least $\nu_d/2$, and so~\eqref{eq:A-bounds}
holds for $n=m+1$. This completes the inductive step.
\end{proof}

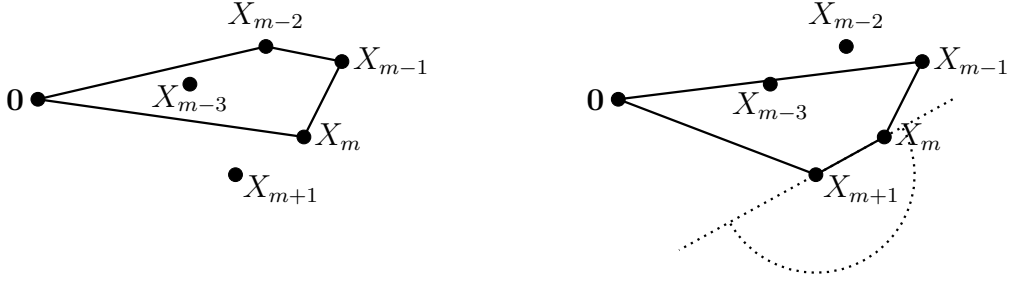
\begin{figure}[!h]
\centering
\begin{tikzpicture}
\draw [color=white,dotted,line width=0.3mm,domain=210:390] plot ({(2.6)+(1.3)*cos(\x)}, {(-1)+(1.3)*sin(\x)});
\draw[fill=black] (0,0) circle (.5ex); 
\draw[fill=black] (3,0.7) circle (.5ex); 
\draw[fill=black] (4,0.5) circle (.5ex); 
\draw[fill=black] (3.5,-0.5) circle (.5ex); 
\draw[fill=black] (2.6,-1) circle (.5ex); 
\draw[fill=black] (2,0.2) circle (.5ex); 
\draw[line width=0.3mm] (0.0,0.0) -- (3.0,0.7); 
\draw[line width=0.3mm] (3.0,0.7) -- (4.0,0.5); 
\draw[line width=0.3mm] (4.0,0.5) -- (3.5,-0.5); 
\draw[line width=0.3mm] (3.5,-0.5) -- (0,0); 
\node at (-0.3,0.0) {$\0$};
\node at (4.65,0.5) {$X_{m-1}$};
\node at (3,1.1) {$X_{m-2}$};
\node at (3.95,-0.5) {$X_{m}$};
\node at (3.2,-1.2) {$X_{m+1}$};
\node at (2.0,0) {$X_{m-3}$};
\end{tikzpicture}
\qquad\qquad
\begin{tikzpicture}
\draw[fill=black] (0,0) circle (.5ex); 
\draw[fill=black] (3,0.7) circle (.5ex); 
\draw[fill=black] (4,0.5) circle (.5ex); 
\draw[fill=black] (3.5,-0.5) circle (.5ex); 
\draw[fill=black] (2.6,-1) circle (.5ex); 
\draw[fill=black] (2,0.2) circle (.5ex); 
\draw[line width=0.3mm] (0.0,0.0) -- (4.0,0.5); 
\draw[line width=0.3mm] (4.0,0.5) -- (3.5,-0.5); 
\draw[line width=0.3mm] (3.5,-0.5) -- (2.6,-1);
\draw[line width=0.3mm] (0,0) -- (2.6,-1);
\draw[dotted,line width=0.3mm] (4.4,0) -- (0.8,-2);
\node at (-0.3,0.0) {$\0$};
\node at (4.65,0.5) {$X_{m-1}$};
\node at (3,1.1) {$X_{m-2}$};
\node at (3.95,-0.5) {$X_{m}$};
\node at (3.2,-1.2) {$X_{m+1}$};
\node at (2.0,-0.1) {$X_{m-3}$};
\draw [dotted,line width=0.3mm,domain=210:390] plot ({(2.6)+(1.3)*cos(\x)}, {(-1)+(1.3)*sin(\x)});
\end{tikzpicture}
\caption{An illustration of the proof of Lemma~\ref{lem:well-defined} with $d = k=2$. \emph{Left:} the new point $X_{m+1}$
sits outside
$\conv ( \calX_{m,k} \cup \{ 0, X_m \} )$.
\emph{Right:} 
Indicated is half of the disc $B(X_{m+1} ; 1)$ which falls outside the updated convex hull $\conv ( \calX_{m+1,k} \cup \{ 0, X_{m+1} \} )$, giving a lower bound
on the area of $\calA (\calX_{m+1,k};X_{m+1})$.
}
\label{fig:half-disc}
\end{figure}
 
For $m \in \ZP$ define
\[ Y_{m} := (X_{m k},X_{m k +1},\ldots,X_{(m+1)k} ). \]
Let $x_0, \ldots, x_k \in \R^d$.
A sequence $(y_1,\ldots ,y_k)$ of points in $\R^d$ is called an \emph{admissible path} from history 
$x_0, x_1, \ldots, x_k$ if 
$y_1  \in \calA( x_0, \ldots, x_{k-1} ; x_k )$, 
$y_2 \in \calA( x_1, \ldots, x_{k-1} , x_k ; y_1 )$,
$y_3 \in \calA (x_2,\ldots, x_k, y_1; y_2)$,
and so on, up to
$y_k \in \calA (x_{k}, y_1 \ldots, y_{k-2} ; y_{k-1} )$.
Let $\calP ( x_0, x_1, \ldots, x_k )$ denote the set
of all admissible paths from history $x_0, x_1, \ldots, x_k$.
To describe the initial steps of the process,
we say  $y_1,\ldots ,y_k \in \R^d$ is  an  admissible initial path
if $y_1 \in \calA ( \emptyset ; 0 )$, $y_2 \in \calA ( \emptyset ; y_1)$,
$y_3 \in \calA (y_1 ; y_2)$, and so on, up to
$y_k \in \calA (y_1, \ldots, y_{k-2} ; y_{k-1})$.
 Let $\calP_0$ denote the set
of all admissible initial paths.

The $\calP ( x_0, x_1, \ldots, x_k )$ are Borel (in fact, closed) subsets of $(\R^d)^k$.
To see this, 
take $(y_{1,n}, \ldots, y_{k,n}) \in \calP( x_0, x_1, \ldots, x_k )$
with  $(y_{1,n}, \ldots, y_{k,n}) \to (y_1,\ldots,y_k)$ as $n \to \infty$.
Since $\calA( x_0, \ldots, x_{k-1} ; x_k )$ is closed,  $y_1 \in \calA( x_0, \ldots, x_{k-1} ; x_k )$.
Moreover, as a function taking values in the non-empty compact subsets of $\R^d$ endowed with the Hausdorff metric (denoted $\rho_H$),
  $y_1 \mapsto \calA( x_1, \ldots, x_{k-1} , x_k ; y_1 )$ is continuous, and so
$\lim_{n \to \infty} \calA( x_1, \ldots, x_{k-1} , x_k ; y_{1,n} ) = \calA( x_1, \ldots, x_{k-1} , x_k ; y_1 )$.
Given $\eps >0$, we can (and do) choose $n$ sufficiently large so that $\| y_{2,n} - y_2 \| < \eps$
and $\rho_H ( \calA( x_1, \ldots, x_{k-1} , x_k ; y_{1,n} ) ,  \calA( x_1, \ldots, x_{k-1} , x_k ; y_1 ) ) < \eps$.
Then since $y_{2,n} \in \calA( x_1, \ldots, x_{k-1} , x_k ; y_{1,n} )$, there exists $z_n \in \calA( x_1, \ldots, x_{k-1} , x_k ; y_1 )$
with $\| y_{2,n} - z_n \| < \eps$, so that $\| z_n - y_2 \| < 2\eps$. Hence $y_2 = \lim_{n\to \infty} z_n \in \calA( x_1, \ldots, x_{k-1} , x_k ; y_1 )$,
since the latter set is closed. Continuing this argument shows that $(y_1,\ldots,y_k) \in \calP ( x_0, x_1, \ldots, x_k )$,
so the latter set is closed.
Similarly, $\calP_0$ is a closed subset of  $(\R^d)^k$.

\begin{lemma}
\label{lem:Y-Markov}
The process $Y = (Y_0, Y_1, Y_2, \ldots)$ is a Markov process
on $(\R^{d})^{k+1}$ with transition function defined
for all Borel sets $\fB \subseteq  (\R^{d})^{k}$  by
\[ \Pr ( Y_{m+1} \in \{ x_k \} \times \fB \mid Y_m = (x_0, \ldots, x_k) ) = 
\int_{\fB \cap \calP ( x_0, \ldots, x_k)} \!\!\! f ( y_1,  \ldots, y_k \mid x_0, \ldots, x_k ) \ud y_1 \cdots \ud y_k ,
\]
where for all $(y_1, \ldots, y_k ) \in \calP ( x_0, \ldots, x_k)$,
with $p ( \, \cdot \mid   \cdot\, )$ given by~\eqref{def:walkk},
\begin{align}
\label{eq:f-def}
  f ( y_1, \ldots, y_k \mid x_0, x_1, \ldots, x_k )  
& = p ( y_1 \mid x_0, x_1, \ldots, x_{k-1} ; x_k ) \nn\\
& {}\quad{} \times p (y_2 \mid x_1, \ldots, x_{k} ; y_1 ) \nn\\
& {}\quad{} \times p ( y_3 \mid x_2, \ldots, x_k, y_1 ; y_2 ) \nn\\
& {}\quad{} \times \cdots \times p ( y_k \mid x_k, y_0, \ldots , y_{k-2} ; y_{k-1} ),
\end{align}
and elsewhere we set $f = 0$.
Moreover, the initial distribution is
\[ \Pr ( Y_0 \in \{ 0 \} \times \fB ) = \int_{\fB \cap \calP_0} f_0( y_1,  \ldots, y_k ) \ud y_1 \cdots \ud y_k ,\]
where for all $(y_1, \ldots, y_k ) \in \calP_0$,
\begin{align*}
  f_0 ( y_1, \ldots, y_k  )  
& = p ( y_1 \mid \emptyset ; 0 )   p (y_2 \mid \emptyset ; y_1 ) 
 p ( y_3 \mid  y_1 ; y_2 )  \cdots  p ( y_k \mid  y_1, \ldots , y_{k-2} ; y_{k-1} ).
\end{align*}
\end{lemma}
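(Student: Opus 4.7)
The plan is to read the Markov structure of $Y$ directly off the memory-$k$ structure of $X$. The key observation is that by~\eqref{eq:transition-rule} the conditional law of $X_{n+1}$ given $(X_0, \ldots, X_n)$ depends on the full past only through the $(k+1)$-point window $(X_{n-k}, X_{n-k+1}, \ldots, X_n)$ (for $n \geq k$), together with the origin which is always remembered. Since $Y_m$ records exactly such a window, the block $(X_{(m+1)k+1}, \ldots, X_{(m+2)k})$, and hence $Y_{m+1}$, depends on $X_0, X_1, \ldots, X_{(m+1)k}$ only through $Y_m$; this is the Markov property.

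To produce the explicit transition formula, I would fix $(x_0, \ldots, x_k) \in (\R^d)^{k+1}$ and a Borel set $\fB \subseteq (\R^d)^k$, and condition on $\{Y_m = (x_0, \ldots, x_k)\}$, i.e., on $X_{mk+i} = x_i$ for $0 \leq i \leq k$. The first coordinate of $Y_{m+1}$ equals $X_{(m+1)k} = x_k$ deterministically, which accounts for the Dirac factor $\{x_k\} \times \fB$ in the displayed transition. For the remaining coordinates $y_j := X_{(m+1)k+j}$, $j = 1, \ldots, k$, the chain rule for conditional densities yields
\[
\prod_{j=1}^k p\bigl(y_j \mid \calX_{(m+1)k+j-1,k}\,;\, X_{(m+1)k+j-1}\bigr) .
\]
A direct index check identifies $\calX_{(m+1)k+j-1,k}$ as the list of the $k$ positions indexed from $mk + j - 1$ to $(m+1)k + j - 2$, equivalently $(x_{j-1}, x_j, \ldots, x_k, y_1, \ldots, y_{j-2})$, while $X_{(m+1)k+j-1}$ equals $x_k$ for $j=1$ and $y_{j-1}$ for $j \geq 2$. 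Substituting these expressions reproduces~\eqref{eq:f-def} factor by factor. The support restriction $(y_1, \ldots, y_k) \in \calP(x_0, \ldots, x_k)$ is automatic from~\eqref{def:walkk}: each factor of $p$ vanishes outside its admissibility set $\calA$, and the iterated admissibility conditions are precisely those defining $\calP(x_0, \ldots, x_k)$.

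For the initial distribution I would run the same chain-rule computation starting from $X_0 = 0$, with the observation that for $n \leq k$ the window $\calX_{n,k} = \{X_1, \ldots, X_{n-1}\}$ is truncated (since $\max(1, n-k) = 1$), producing factors of $p$ with fewer than $k$ history points; these match the factors defining $f_0$ exactly, and the support in $\calP_0$ follows by the same admissibility argument.

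The argument is essentially pure bookkeeping, and I do not anticipate any substantive obstacle. The only care needed is in indexing the rolling windows $\calX_{n,k}$ correctly as $n$ varies across the block $(m+1)k \leq n \leq (m+2)k - 1$, which determines precisely which of the $x_i$'s and $y_j$'s appear in each factor of~\eqref{eq:f-def}; the measurability prerequisites are handled by the preceding discussion showing $\calP(x_0, \ldots, x_k)$ is closed (hence Borel).
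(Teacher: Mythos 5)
Your proposal is correct and follows essentially the same route as the paper's proof: the paper also conditions step by step across the block, using that each one-step law depends on the past only through the rolling window $\calX_{n,k}$ (plus the origin), and iterates the resulting integrals --- the only cosmetic difference being that the paper first reduces to product sets $\fB = \prod_{i=1}^k \fB_i$ rather than invoking the chain rule for densities directly. Your index bookkeeping for the windows is accurate (indeed it gives the full $k$-point history $x_{j-1},\ldots,x_k,y_1,\ldots,y_{j-2}$ in the final factor, consistent with the pattern of~\eqref{eq:f-def}), so no gap remains.
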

\begin{proof}
It suffices to suppose that $\fB = \prod_{i=1}^k \fB_i$ for $\fB_i$ Borel sets in $\R^d$. By~\eqref{eq:transition-rule} and~\eqref{def:walkk},
\[ \Pr ( X_{(m+1)k+1} \in \fB_1 \mid Y_m = (x_0, \ldots, x_k) )
= \int_{\fB_1 \cap \calA ( x_0 , \ldots, x_{k-1} ; x_k )} p ( y_1 \mid x_0, x_1, \ldots, x_{k-1} ; x_k )  \ud y_1,  \]
which gives the result if $k=1$. Otherwise,
\begin{align*} 
& {} \qquad {} \Pr ( ( X_{(m+1)k+1} , X_{(m+1)k+2} ) \in \fB_1 \times \fB_2 \mid Y_m = (x_0, \ldots, x_k) ) \\
& = \int_{\fB_1 \cap \calA ( x_0 , \ldots, x_{k-1} ; x_k )} \Pr ( X_{(m+1)k+2} \in \fB_2 \mid X_{mk+1} = x_1, \ldots, X_{(m+1)k} = x_k, X_{(m+1)k +1 } = y_1 )  \\
& {} \qquad\qquad\qquad\qquad\qquad\qquad\qquad {} \times p ( y_1 \mid x_0, x_1, \ldots, x_{k-1} ; x_k )\ud y_1  \\
& = \int_{\fB_1 \cap \calA (x_0, \ldots, x_{k-1} ; x_k )}
\int_{\fB_2  \cap \calA (x_1, \ldots, x_{k} ; y_1 )} p ( y_2 \mid x_1, \ldots, x_k ; y_1 ) p ( y_1 \mid x_0,  \ldots, x_{k-1} ; x_k ) \ud y_2 \ud y_1  
,\end{align*}
which gives the result if $k=2$. Iterating this argument gives the transition function for general $k$.
A similar argument gives the law of $Y_0$.
\end{proof}
 
For $n \in \ZP$ define the $\sigma$-algebra $\calF_n := \sigma ( X_0, X_1, \ldots, X_n)$.
For $x \in \R^d \setminus \{ 0\}$, define $\hat x := x / \| x \|$.
For convenience,   set $\hat 0 := 0$. We write `$\, \cdot \, $' for the scalar product on $\R^d$.
The following important result says that the radial component of the drift of the process
is always non-negative.

\begin{proposition}
\label{prop:centrifugal}
We have that, for all $n \in \ZP$,
\begin{align*}
 \Exp [ ( X_{n+1} - X_n ) \cdot \hat X_n \mid \calF_n ] & \geq 0, \as
\end{align*}
\end{proposition}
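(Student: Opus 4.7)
The approach is a reflection/symmetrisation argument that exploits the presence of the origin in the forbidden hull. The case $n=0$ is trivial since $\hat X_0=0$, so focus on $n\geq 1$, noting that $X_n\neq 0$ almost surely (any segment from $X_n$ to $0$ passes through the origin, which lies in the forbidden hull). Conditional on $\calF_n$, the centred increment $Z:=X_{n+1}-X_n$ is uniform on $\calA(\calX_{n,k};X_n)-X_n$, which by~\eqref{eq:admissible2} is (the closure of) $B(0;1)\setminus C$, where $C:=\cone(X_n;\calX_{n,k}\cup\{0,X_n\})-X_n$ is a closed convex cone with apex at the origin. The crucial structural point is that $0-X_n=-\|X_n\|\hat X_n$ lies in $C$, so the unit vector $-\hat X_n$ belongs to $C$.

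The main step is to exploit this via the orthogonal reflection across the hyperplane $\hat X_n^\perp$, namely $\phi(z):=z-2(z\cdot\hat X_n)\hat X_n$. I will show that if $z\in C$ satisfies $z\cdot\hat X_n\geq 0$, then $\phi(z)\in C$: this is a one-line convex-cone argument, since $\phi(z)=z+2(z\cdot\hat X_n)(-\hat X_n)$ is a non-negative combination of two members of $C$. Passing to complements in $B(0;1)$, the same statement reads: if $z\in\calA:=B(0;1)\setminus C$ has $z\cdot\hat X_n<0$, then $\phi(z)\in\calA$. Consequently, $\phi$ is a measure-preserving injection of $\calA^-:=\calA\cap\{z\cdot\hat X_n<0\}$ into $\calA^+:=\calA\cap\{z\cdot\hat X_n>0\}$.

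To conclude, with $\vol{d}(\calA)>0$ guaranteed by Lemma~\ref{lem:well-defined}, I will decompose
\begin{equation*}
\vol{d}(\calA)\cdot\Exp[Z\cdot\hat X_n\mid\calF_n]=\int_{\calA^-}z\cdot\hat X_n\,\ud z+\int_{\phi(\calA^-)}z\cdot\hat X_n\,\ud z+\int_{\calA^+\setminus\phi(\calA^-)}z\cdot\hat X_n\,\ud z.
\end{equation*}
The first two integrals cancel under the change of variables $z\mapsto\phi(z)$ (using $\phi(z)\cdot\hat X_n=-z\cdot\hat X_n$), and the third is non-negative because the integrand is non-negative on $\calA^+$.

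I do not anticipate any real obstacle: the only genuine idea is the convex-combination observation $\phi(z)\in C$, and it relies precisely on the origin being anchored in the forbidden hull --- the very feature the authors flag elsewhere as essential to ballisticity.
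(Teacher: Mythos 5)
Your proof is correct, and its overall route is the same symmetrisation used in the paper: reflect in the hyperplane through $X_n$ orthogonal to $\hat X_n$, map $\calA^-$ injectively into $\calA^+$, cancel the paired contributions, and observe the leftover integral over $\calA^+\setminus\phi(\calA^-)$ is non-negative; your decomposition is exactly the paper's~\eqref{eq:radial-mean}. Where you genuinely differ is in the proof of the key claim (the paper's~\eqref{eq:symm}): the paper passes to the open set $\calA'$ of~\eqref{eq:admissible3} and invokes a Hahn--Banach separating hyperplane $H$ with $H \cap \cone(x;\calX_{n,k}\cup\{0,x\}) = \{x\}$, checking that both $y$ and $S_x(y)$ land in $H^+$ while the cone lies in $\cl H^-$. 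You instead translate the cone to have apex at the origin, note that $-\hat X_n$ lies in it (this is where the anchored origin enters), and observe that for $z$ in the cone with $z\cdot\hat X_n\geq 0$ the reflection $\phi(z)=z+2(z\cdot\hat X_n)(-\hat X_n)$ is a non-negative combination of cone elements, hence stays in the cone; the statement about the admissible set then follows by contraposition and the involution property of $\phi$. This is a valid and arguably cleaner argument: it avoids the separation theorem and the interior-versus-closure manoeuvre entirely (the null-set discrepancy between $\cl(B(X_n;1)\setminus C)$ and $B(X_n;1)\setminus C$ is harmless for the integral, as you implicitly use), at the cost of nothing; the paper's separating-hyperplane formulation is the one it reuses later (e.g.\ in the proofs of Lemmas~\ref{lem:fact0} and~\ref{lem:sufficient-for-gg}), which may explain its choice.
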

\begin{proof}
Given that $\hat 0 =0$, it suffices to suppose that $n \geq 1$, in which case $X_n \neq 0$, a.s.  On the event $\{X_n=x\}$, for $x \neq 0$,
by~\eqref{eq:transition-rule} and~\eqref{def:walkk} we can write
\begin{align*}
  \Exp [ ( X_{n+1} - X_n ) \cdot \hat X_n \mid \calF_n ]  &= \frac{1}{\vol{d} (\calA (\calX_{n,k};x))} 
 \int_{ \calA (\calX_{n,k};x) } (y-x) \cdot \hat x  \ud y \\
& =  \frac{1}{\vol{d} (\calA (\calX_{n,k};x))} 
 \int_{ \calA' (\calX_{n,k};x) } (y-x) \cdot \hat x  \ud y,
\end{align*}
where the open set
\begin{equation}
\label{eq:admissible3}
 \calA' (\calX ; x ) = \Int \left(  B(x ; 1) \setminus\cone (x ; \calX \cup \{0,x\}) \right) , \end{equation}
differs from $\calA (\calX ; x)$ as given by~\eqref{eq:admissible2}
by  a set of measure zero (`$\Int$' stands for `interior').
For $x \neq 0$, define $S_x:  z \mapsto z-2\big((z-x)\cdot \hat x\big) \hat x$, the orthogonal transformation of $\R^d$
induced by reflection in the hyperplane at $x$ orthogonal to $\hat x$. We claim that
\begin{equation} \label{eq:symm}
y \in \calA'(\calX_{n,k};x) \text{ and } (y-x) \cdot \hat x < 0 \text{ imply that } S_x(y) \in \calA'(\calX_{n,k};x).
\end{equation}
Write $\calA^+ = \{ y \in \calA'(\calX_{n,k};x) : (y-x) \cdot \hat x > 0 \}$
and $\calA^- = \{ y \in \calA'(\calX_{n,k};x) : (y-x) \cdot \hat x < 0 \}$. Then by~\eqref{eq:symm} and the fact that
$(S_x (z) - x ) \cdot \hat x = - (z-x) \cdot \hat x$, we have
$S_x (\calA^-) \subseteq \calA^+$ and, using also the fact that $S_x$ is a measure-preserving bijection,
\begin{equation}
\label{eq:flip}
 \int_{S_x (\calA^-)}  (y-x) \cdot \hat x  \ud y =  \int_{\calA^-} ( S_x (z) - x) \cdot \hat x \ud z = - \int_{ \calA^- } (z-x) \cdot \hat x  \ud z .\end{equation}
Hence, partitioning $\calA^+$ into $S_x (\calA^-)$ and $\calA^+ \setminus S_x (\calA^-)$, we get
\begin{align}
\label{eq:radial-mean}
 \int_{ \calA' (\calX_{n,k};x) } (y-x) \cdot \hat x  \ud y & =  \int_{ \calA^- } (y-x) \cdot \hat x  \ud y
+ \int_{S_x (\calA^-)}  (y-x) \cdot \hat x  \ud y \nonumber\\ & {} \qquad {}
+ \int_{\calA^+ \setminus S_x (\calA^-)}  (y-x) \cdot \hat x  \ud y \nonumber\\
& = \int_{\calA^+ \setminus S_x (\calA^-)}  (y-x) \cdot \hat x  \ud y,
\end{align}
using~\eqref{eq:flip}. Moreover, the integrand in the final integral in~\eqref{eq:radial-mean}
is positive, by definition of $\calA^+$. Thus we conclude that the final integral in~\eqref{eq:radial-mean}
is non-negative.

It remains to prove the claim~\eqref{eq:symm}. To do so, we use a finite-dimensional version of Hahn--Banach theorem: 
for all $y \in  \calA'(\calX_{n,k};x)$, there exists a hyperplane $H$ separating  $\{y\}$ and
$\cone (x;\calX_{n,k} \cup \{0,x\})$ such that $y \notin H$ and
\[
H \bigcap \cone (x;\calX_{n,k} \cup \{0,x\}) = \{x\}  ;
\]
here it is important that we used $\calA'$ defined at~\eqref{eq:admissible3}. 
Consider the unit vector $h$ perpendicular to $H$ and such that $h \cdot \hat x >0$, and denote by $H^+, H^-$ the half-spaces 
\[
H^\pm = \{ z \in  \R^d : \pm (z-x) \cdot h >0 \}.
\]
Then, for $y \in \calA'(\calX_{n,k};x) $ and $(y-x) \cdot \hat x < 0 $, we have
$y \in H^+$ and $S_x(y) \in H^+$ though $\cone (\calX_{n,k} \cup \{0,x\}) \subseteq \cl H^-$. Since also $S_x(y) \in \Int B(x;1)$,
we have from~\eqref{eq:admissible3} that $S_x(y) \in \calA'(\calX_{n,k};x)$. Thus
we have proved~\eqref{eq:symm}.
\end{proof}

We would like to improve Proposition~\ref{prop:centrifugal} to show that the radial drift 
is uniformly positive. However, it is not hard to see that there are configurations
for which this is not true if we compute the drift in a single step. Thus we are led to consider multiple steps.
In order to control the possible configurations of the walk's history, we can demand that the walk first
makes a chain of jumps away from the convex hull, and then makes another chain of jumps in the radial direction.
These two constructions will be central to our renewal structure that we describe in the next section,
and they are the focus of the next two results.

For $x \in \R^d$, $\delta \in (0,1/8)$, and any unit vector $u \in \Sp{d-1}$, 
define 
\begin{equation}
\label{eq:pi-def}
 \Pi^{u} (x) := \prod_{i=1}^k B  \bigl( x +  \tfrac {i}{2} u ; \delta \bigr) \subseteq (\R^d)^{k}.
\end{equation}
Given $\calF_n$, consider a tangent hyperplane at $X_n$ to   $\conv(\calX_{n,k} \cup \{ 0, X_n \})$, and 
let $h$ be the perpendicular unit vector to this hyperplane, pointing opposite to 
the convex hull.
We show that from any configuration, the walk will follow the chain laid out by $\Pi^h (X_n)$
with uniformly positive probability.

\begin{lemma} 
\label{lem:fact0}
We have that
\[  \Pr ( (X_{n+1} , \ldots, X_{n+k} ) \in \Pi^h (X_n) \mid \calF_n ) \geq \delta^{dk} , \as \]
\end{lemma}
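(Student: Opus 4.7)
The plan is to inductively follow the chain of beacons $x + (i/2) h$ for $i = 1, \ldots, k$, where $x := X_n$, and at each step verify that the next small target ball lies entirely inside the current admissible region. Writing $E_i := \{ X_{n+j} \in B(x + (j/2) h ; \delta) : 1 \leq j \leq i \}$ (with $E_0 := \Omega$), I aim to show that, on $E_{i-1} \cap \calF_n$,
\[ B ( x + (i/2) h ; \delta ) \subseteq \calA ( \calX_{n+i-1,k} ; X_{n+i-1} ) . \]
Granted this, the upper bound in~\eqref{eq:A-bounds} from Lemma~\ref{lem:well-defined} yields $p \geq 1/\nu_d$ on its support via~\eqref{def:walkk}, so the conditional probability that the $i$th step lands in the next beacon ball, given $E_{i-1}$, is at least $\nu_d \delta^d / \nu_d = \delta^d$. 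Iterating $i$ from $1$ to $k$ produces $\Pr(E_k \mid \calF_n) \geq \delta^{dk}$, which is the required bound since $E_k$ is the event $\{ (X_{n+1}, \ldots, X_{n+k}) \in \Pi^h (x) \}$.

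For the containment, I propose the hyperplane through $X_{n+i-1}$ perpendicular to $h$ as a supporting hyperplane to the current hull $\conv(\calX_{n+i-1,k} \cup \{0, X_{n+i-1}\})$. On $E_{i-1}$, the origin and the `old' memory points (indices $\leq n-1$) all lie in $\{z : (z-x)\cdot h \leq 0\}$ by the defining property of $h$, whence $(z - X_{n+i-1}) \cdot h \leq -((i-1)/2 - \delta) < 0$. The remaining memory points $X_{n+j}$ with $0 \leq j \leq i-2$ (where $X_n = x$ is the case $j=0$) satisfy
\[ (X_{n+j} - X_{n+i-1}) \cdot h \leq \tfrac{j}{2} + \delta - \bigl( \tfrac{i-1}{2} - \delta \bigr) \leq -\tfrac{1}{2} + 2 \delta < 0 , \]
using $\delta < 1/8$. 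Convexity preserves these bounds, so the entire hull lies in the closed half-space $\{z : (z - X_{n+i-1}) \cdot h \leq 0\}$. Meanwhile, any $y \in B(x + (i/2) h ; \delta)$ satisfies $(y - X_{n+i-1}) \cdot h \geq 1/2 - 2\delta > 0$ and $\| y - X_{n+i-1} \| \leq 1/2 + 2\delta < 1$, so the segment $(X_{n+i-1}, y]$ stays strictly on the outward side of the supporting hyperplane (hence misses the hull) while remaining in the unit ball, giving $y \in \calA$ via~\eqref{eq:admissible1}.

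The main obstacle is precisely the geometric check above: one must track simultaneously the `old' convex hull lying behind $x$ and the `new' string of beacons laid down in front of $x$ along $h$, and verify that a single half-space argument at the current position $X_{n+i-1}$ dispatches both classes of obstacles. The spacing $1/2$ between consecutive beacons, together with the hypothesis $\delta < 1/8$, supplies a comfortable margin in the two controlling inequalities $(i-1)/2 - \delta > 0$ (for $i \geq 2$) and $1/2 - 2\delta > 0$, so the induction closes uniformly in $i \in \{1, \ldots, k\}$ and in the configuration $\calF_n$.
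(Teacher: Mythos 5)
Your proof is correct and follows essentially the same route as the paper's: an induction along the beacons $x+\tfrac{i}{2}h$, a separating hyperplane perpendicular to $h$ (the paper places it through the ideal centre $x_i$ rather than through the actual position $X_{n+i-1}$, but the estimates are the same), and the per-step bound $\delta^d$ coming from the volume bounds of Lemma~\ref{lem:well-defined}. The only slip is cosmetic: for $i=1$ your inequality $(z-X_{n+i-1})\cdot h\le -\bigl(\tfrac{i-1}{2}-\delta\bigr)<0$ should simply read $(z-x)\cdot h\le 0$, which still suffices because every point of $(X_n,y]$ has strictly positive $h$-component relative to $X_n$.
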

\begin{proof}
Suppose that $X_n = x$; note that $x \cdot h \geq 0$.
Let $x_i = x + \frac{i}{2}h$ for $0 \leq i \leq k$.
Define the events $A_i = \{ X_{n+i} \in B ( x_i ; \delta )\}$.
It is easy to see that $B ( x_1 ; \delta) \subseteq \calA (\calX_{n,k} ; X_n)$.
Hence, by~\eqref{eq:transition-rule} and~\eqref{def:walkk},
\begin{equation}
\label{eq:case1}
 \Pr ( A_1 \mid \calF_n ) = \frac{\vol{d} B ( x_1 ; \delta )}{\vol{d} \calA (\calX_{n,k} ; X_n)} \geq \delta^d ,\end{equation}
by Lemma~\ref{lem:well-defined}. If $k=1$, this completes the proof. In general, we claim that
\begin{equation}
\label{eq:casei}
 \Pr ( A_{i+1} \mid \calF_{n+i} ) \geq \delta^d , \text{ on } \cap_{j=1}^i A_i .\end{equation}
Then, for instance,
\[ \Pr ( A_1 \cap A_2 \mid \calF_n ) = \Exp [ \Pr ( A_2 \mid \calF_{n+1} ) \1 ( A_1 ) \mid \calF_n ] \geq \delta^{2d} , \as,\]
by~\eqref{eq:case1} and~\eqref{eq:casei}. Iterating this argument proves the statement in the lemma.

It remains to prove the claim~\eqref{eq:casei}.
For $1 \leq i \leq k$, consider the hyperplane $H_i = \{ y \in \R^d : (y-x_i) \cdot h = 0 \}$.
Any $z \in \conv(\calX_{n,k} \cup \{ 0, X_n \})$ has $(z-x_i) \cdot h \leq -i/2 < -\delta$, and,
for $j < i$,  $(x_j - x_i ) \cdot h \leq -1/2 < -\delta$, while $(x_{i+1} -x_i) \cdot h = 1/2 > \delta$. Hence $x_i$ lies on $H_i$, and the hyperplane separates
$\conv(\calX_{n,k} \cup \{ 0, X_n \})$ and all the $B ( x_j ; \delta )$, $j < i$, from $B ( x_{i+1} ; \delta )$.
Thus, $B (x_{i+1} ; \delta ) \subseteq \calA ( \calX_{n+i,k} ; X_{n+i} )$ on $\cap_{j=1}^i A_i$.
The claim~\eqref{eq:casei} follows.
\end{proof}

With the notation at~\eqref{eq:pi-def}, define
$\Pi (x) := \Pi^{\hat x} (x) =  \prod_{i=1}^k B  ( x +  \tfrac {i}{2} \hat x ; \delta )$. 
The key to our renewal structure is the following definition: 
\begin{equation} \label{eq:G}
 \calG := \left\{ (x_0,\ldots,x_k) \in (\R^d)^{k+1} : x_k \neq 0, \text{ and } \Pi (x_k) \subseteq \calP ( x_0, \ldots, x_k) \right\}.
\end{equation}
For $n \geq k$, let $G_n \in \calF_n$ denote the event $G_n := \{ (X_{n-k},\ldots, X_n) \in  \calG \}$;
if $G_n$ occurs, we say that $X$ has \emph{good geometry} at time $n$.

 Roughly speaking,
the process has good geometry if the configuration is such that, in the next $k$ steps,
all trajectories through the 
sequence of balls laid out by $\Pi$ are admissible.
More precisely, the next result shows that, if the process has good geometry, then 
the law of the next $k$ steps has a uniform component on 
the balls laid out by $\Pi$.

\begin{lemma}
\label{lem:fact1} 
For all $n \geq k$ and all Borel $\fB \subseteq (\R^d)^k$, on the event $G_n$,
\[
\Pr (  (X_{n + 1},\ldots, X_{n+k}) \in \fB  \mid \calF_n ) \geq \delta^{dk} \frac{  \vol{dk} \fB}{ ( \vol{d} B(0;\delta) )^k} \1 \{ \fB \subseteq \Pi ( X_n) \} .
\]
\end{lemma}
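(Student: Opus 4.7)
The plan is to exploit the definition of good geometry directly: on $G_n$, every sequence inside $\Pi(X_n)$ is an admissible path, so the conditional density of $(X_{n+1},\ldots,X_{n+k})$ factorises as a product of transition densities $p$, and each of these is uniformly bounded below by $1/\nu_d$ thanks to Lemma~\ref{lem:well-defined}. Concretely, I would first condition on $\calF_n$, write
\[
\Pr((X_{n+1},\ldots,X_{n+k}) \in \fB \mid \calF_n) = \int_{\fB} p(y_1 \mid \calX_{n,k}; X_n) \, p(y_2 \mid \calX_{n+1,k}; y_1) \cdots p(y_k \mid \calX_{n+k-1,k}; y_{k-1}) \, \ud y_1 \cdots \ud y_k,
\]
where the integrand is obtained by iterating~\eqref{eq:transition-rule} exactly as in the proof of Lemma~\ref{lem:Y-Markov}, but starting from time $n$ instead of $mk$.

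Next I would restrict the indicator to $\fB \subseteq \Pi(X_n)$ (otherwise the bound is trivially zero). On the event $G_n$, the definition~\eqref{eq:G} gives $\Pi(X_n) \subseteq \calP(X_{n-k},\ldots,X_n)$, so every $(y_1,\ldots,y_k) \in \fB$ is an admissible path. This means that at each factor in the product the point $y_i$ lies in the admissible set $\calA(\cdot\,; y_{i-1})$, hence the indicator in~\eqref{def:walkk} equals $1$ and the density reduces to $p = 1/\vol{d}\calA(\cdot\,;\cdot)$. The upper bound in~\eqref{eq:A-bounds} from Lemma~\ref{lem:well-defined} then yields $p \geq 1/\nu_d$ for every factor.

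Multiplying out $k$ such factors gives an integrand bounded below by $1/\nu_d^k$ on $\fB$, so
\[
\Pr((X_{n+1},\ldots,X_{n+k}) \in \fB \mid \calF_n) \geq \frac{\vol{dk}\fB}{\nu_d^k} \, \1\{\fB \subseteq \Pi(X_n)\}.
\]
To match the form in the statement, I would just observe that $\vol{d} B(0;\delta) = \delta^d \nu_d$, so $\delta^{dk}/(\vol{d}B(0;\delta))^k = 1/\nu_d^k$, and this completes the argument.

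There is no real obstacle here: the lemma is essentially an unpacking of the definition of $\calG$ combined with the volume bound from Lemma~\ref{lem:well-defined}. The only minor subtlety is that Lemma~\ref{lem:Y-Markov} is stated for the Markov chain $Y_m = (X_{mk},\ldots,X_{(m+1)k})$ at block times, whereas here one needs the analogous factorisation starting from an arbitrary time $n \geq k$; but this is immediate from iterating~\eqref{eq:transition-rule} in the same way as in the proof of Lemma~\ref{lem:Y-Markov}, and requires no new idea.
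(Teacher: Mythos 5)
Your proof is correct and follows essentially the same route as the paper: the definition of $G_n$ makes every path in $\Pi(X_n)$ admissible, so each one-step factor equals $1/\vol{d}\calA(\cdot\,;\cdot) \geq 1/\nu_d$ (the trivial upper bound from Lemma~\ref{lem:well-defined}), and $\delta^{dk}/(\vol{d} B(0;\delta))^k = \nu_d^{-k}$ matches the stated constant. The only cosmetic difference is that you bound the $k$-step transition density pointwise and integrate, which treats general Borel $\fB$ in one stroke, whereas the paper iterates conditional probabilities over product sets $\fB = \prod_i \fB_i$; the geometric and volumetric inputs are identical.
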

\begin{proof}
For $n \geq k$ suppose that $\fB \subseteq \Pi ( X_n)$,
where $\fB = \prod_{i=1}^k \fB_i$ for $\fB_i$ Borel subsets of $\R^d$.
On the event $G_n$, we have that $\Pi (X_n) \subseteq \calP ( X_{n-k}, \ldots, X_n)$. 
In particular, $\fB_1 \subseteq B ( X_n + \frac{1}{2} \hat X_n ; \delta ) \subseteq \calA ( \calX_{n,k} ; X_n )$,
so that, by~\eqref{eq:transition-rule} and~\eqref{def:walkk} we have,
on $G_n \cap \{ \fB \subseteq \Pi ( X_n) \}$,
\begin{align*}
\Pr (  X_{n + 1} \in \fB_1 \mid \calF_n ) 
& =  \frac{ \vol{d} \fB_1}{ \vol{d} \calA ( \calX_{n,k} ; X_n)}  \geq  \frac{ \vol{d} \fB_1 }{ \vol{d} B(0;1)} ,\end{align*}
by Lemma~\ref{lem:well-defined}. Hence
\[ \Pr (  X_{n + 1} \in \fB_1 \mid \calF_n ) \geq \delta^d \frac{ \vol{d} \fB_1}{ \vol{d}   B(0 ; \delta) } 
 . \] 
If $k=1$ this ends the proof. Otherwise, on $G_n \cap \{ \fB \subseteq \Pi ( X_n) \} \cap \{ X_{n+1} \in \fB_1 \}$,
we have that $\fB_2 \subseteq B ( X_n + \frac{2}{2} \hat X_n ; \delta ) \subseteq  \calA ( \calX_{n+1,k} ; X_{n+1} )$,
so that
\begin{align*}
\Pr (  X_{n + 2} \in \fB_2 \mid \calF_{n+1} ) 
& =  \frac{ \vol{d} \fB_2}{ \vol{d} \calA ( \calX_{n+1,k}; X_{n+1})}  \geq  \delta^d \frac{ \vol{d} \fB_2}{ \vol{d}   B(0 ; \delta) } ,\end{align*}
as before. Hence, on $G_n \cap \{ \fB \subseteq \Pi ( X_n) \}$,
\begin{align*} \Pr ( (X_{n+1}, X_{n+2} ) \in \fB_1 \times \fB_2 \mid \calF_n )
& \geq \Exp [ \Pr (  X_{n + 2} \in \fB_2 \mid \calF_{n+1} ) \1 \{ X_{n+1} \in \fB_1 \} \mid \calF_n ]\\
& \geq  \delta^{2d}  \frac{ \vol{2d} ( \fB_1 \times \fB_2) }{ (\vol{d}   B(0 ; \delta) )^2} .\end{align*}
Iterating this argument gives the result.
\end{proof}

Recall the definition of $h$ from just before Lemma~\ref{lem:fact0}. 
The connection between the last two lemmas is the following.

\begin{lemma}
\label{lem:sufficient-for-gg}
We have that $(X_{n+1} , \ldots, X_{n+k} ) \in \Pi^h (X_n)$ implies $G_{n+k}$.
\end{lemma}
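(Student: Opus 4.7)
The plan is to prove $G_{n+k}$ directly from the definition of $\calG$: for arbitrary $y_i \in B(X_{n+k}+\tfrac{i}{2}\hat X_{n+k};\delta)$ with $i=1,\ldots,k$, I will show $(y_1,\ldots,y_k) \in \calP(X_n,\ldots,X_{n+k})$. Setting $y_0 := X_{n+k}$, this reduces via \eqref{eq:admissible2} to showing that for each $i$, the vector $y_i - y_{i-1}$ lies outside the convex cone $C_{i-1}$ generated by $\{0-y_{i-1}\} \cup \{z - y_{i-1} : z \in \calX_{n+k+i-1,k}\}$. Throughout, I parametrise $X_{n+i} = X_n + \tfrac{i}{2}h + \eps_i$ and $y_i = X_{n+k} + \tfrac{i}{2}\hat X_{n+k} + w_i$, with $\|\eps_i\|,\|w_i\|\leq \delta$. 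The starting observation is $X_n \cdot h \geq 0$ (both $0$ and $X_n$ belong to the hull supported by the tangent hyperplane at $X_n$), whence $X_{n+k} \cdot h \geq k/2 - \delta > 0$; this gives both $X_{n+k} \neq 0$ (as required by $\calG$) and $\hat X_{n+k} \cdot h \geq 0$. My principal tool will be to bound the $\hat X_{n+k}$-component of cone generators from above and of the increment $y_i - y_{i-1}$ from below.

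For $i \geq 2$ the argument is clean. Every generator $g = z - y_{i-1}$ of $C_{i-1}$ carries a uniform $-\tfrac{i-1}{2}\hat X_{n+k}$ offset coming from $y_{i-1} - X_{n+k}$, and I will verify case by case --- $z = 0$, $z = X_{n+l}$ (where $\hat X_{n+k} \cdot h \geq 0$ makes the $h$-contribution non-positive since $l \leq k$), or $z = y_l$ with $l < i-1$ (where the inner $l/2$ is killed by the outer $-\tfrac{i-1}{2}$) --- that $g \cdot \hat X_{n+k} \leq -\tfrac{1}{2} + 3\delta$. Hence $C_{i-1} \subseteq \{v : v \cdot \hat X_{n+k} \leq 0\}$, while $(y_i - y_{i-1}) \cdot \hat X_{n+k} = \tfrac{1}{2} + (w_i - w_{i-1}) \cdot \hat X_{n+k} \geq \tfrac{1}{2} - 2\delta > 0$, so $y_i - y_{i-1} \notin C_{i-1}$.

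The case $i=1$ is the main obstacle because the offset $-\tfrac{i-1}{2}\hat X_{n+k}$ vanishes: when $\|X_n\|$ is large and $h$ is approximately perpendicular to $\hat X_n$, the generators $v_j := X_{n+j} - X_{n+k}$ can have $v_j \cdot \hat X_{n+k}$ slightly positive (up to $2\delta$), so the simple half-space argument fails. I will argue by contradiction: supposing $y_1 - X_{n+k} = -\alpha \hat X_{n+k} + \sum_{j=0}^{k-1} \beta_j v_j$ with $\alpha, \beta_j \geq 0$, a direct calculation gives $v_j \cdot h \leq -\tfrac{1}{2} + 2\delta$ and $v_j \cdot \hat X_{n+k} \leq 2\delta$ for each $j$. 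Taking the inner product with $h$ (and using $(y_1 - X_{n+k}) \cdot h \geq -\delta$ together with $-\alpha (\hat X_{n+k} \cdot h) \leq 0$) will yield the upper bound $\sum_j \beta_j \leq 2\delta/(1 - 4\delta)$; taking the inner product with $\hat X_{n+k}$ (and using $(y_1 - X_{n+k}) \cdot \hat X_{n+k} \geq \tfrac{1}{2} - \delta$) will yield the lower bound $\sum_j \beta_j \geq (\tfrac{1}{2}-\delta)/(2\delta)$. These two bounds are incompatible as soon as $\delta < 1/6$, so the standing restriction $\delta < 1/8$ from the definition of $\Pi^u$ at \eqref{eq:pi-def} closes the argument. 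This delicate interplay between the $h$-direction, which forces strong negativity on the cone generators, and the $\hat X_{n+k}$-direction, which forces strong positivity on $y_1 - X_{n+k}$, is the crux of the lemma.
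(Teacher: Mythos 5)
Your proof is correct, and for the steps $i \geq 2$ it is essentially the paper's argument: the half-space through $y_{i-1}$ with normal $\hat X_{n+k}$ is the paper's separating hyperplane, and your bound $g \cdot \hat X_{n+k} \leq -\tfrac12 + 3\delta < 0$ on the cone generators is the same computation, resting on $h \cdot \hat X_{n+k} \geq 0$ and $\delta < 1/8$. The genuine point of departure is the first step, $y_1 \in \calA(X_n,\ldots,X_{n+k-1};X_{n+k})$, which the paper dismisses with ``it is not hard to see'': you correctly observe that a single half-space with normal $\hat X_{n+k}$ does not suffice there (the generators $X_{n+j}-X_{n+k}$ can have $\hat X_{n+k}$-component up to $2\delta>0$ when $\|X_n\|$ is large and $h$ is nearly orthogonal to $\hat X_{n+k}$), and a half-space with normal $h$ fails too, since $(y_1-X_{n+k})\cdot h$ can be as small as $-\delta$. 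Your two-inner-product contradiction --- bounding $\sum_j \beta_j$ above through the $h$-direction by $2\delta/(1-4\delta)$ and below through the $\hat X_{n+k}$-direction by $(\tfrac12-\delta)/(2\delta)$, incompatible for $\delta<1/6$ --- closes this gap correctly. A slightly slicker alternative in the paper's style is to separate with the single hyperplane through $X_{n+k}$ of normal $h+\hat X_{n+k}$: every generator $g$ of your cone $C_0$ satisfies $g\cdot(h+\hat X_{n+k}) \leq -\tfrac12+4\delta<0$, while $(y_1-X_{n+k})\cdot(h+\hat X_{n+k}) \geq \tfrac12-2\delta>0$. Either way, your argument is complete; the only cosmetic omission is the (trivial) check $\|y_i-y_{i-1}\|\leq\tfrac12+2\delta<1$ needed to invoke \eqref{eq:admissible2}.
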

\begin{proof}
Suppose that $(X_{n+1} , \ldots, X_{n+k} ) \in \Pi^h (X_n)$. Let $x = X_{n+k}$.
Let $(y_1, \ldots, y_k) \in \Pi (x)$. We must show that $(y_1, \ldots, y_k ) \in \calP ( X_n ,\ldots, X_{n+k} )$.
For convenience, set 
$x_i = x + \frac{i}{2} \hat x$ for $0 \leq i \leq k$, and set
$z_i = X_{n+k-i}$ for $1 \leq i \leq k$.

It is not hard to see that  $y_1 \in \calA ( z_k, \ldots, z_1 ; x )$.
We have $\| y_i - x_i \| \leq \delta$ for $1 \leq i \leq k$,
and $\| z_i - z_i'\| \leq 2\delta$ where $z'_i = x - \frac{i}{2} h$.
For $1 \leq i \leq k-1$, consider the hyperplane $H_i = \{ y \in \R^d : (y-y_i) \cdot \hat x = 0 \}$.
Then $(x-x_i ) \cdot \hat x = - \frac{i}{2}$ so $(x-y_i) \cdot \hat x < 0$.
For all $j$ we have $(z'_j - x_i ) \cdot \hat x \leq - \frac{i}{2} \leq - 4\delta$,
so $(z_j - y_i ) \cdot \hat x < 0$.
Also, for $j < i$ we have $(x_j - y_i ) \cdot \hat x < 0$,
while $(x_{i+1} - y_i) \cdot \hat x > 0$.
Thus $H_i$ contains $y_i$ and separates
$x , z_1, \ldots, z_k$ and any $y_j$, $j < i$, from $y_{i+1}$.
In particular,   for $i=1$, this
 shows that $y_2 \in \calA ( z_{k-1}, \ldots, z_1 , x ; y_1 )$, and so on.
\end{proof}

Now we can state our result on positive radial drift over a number of steps.

\begin{proposition}
\label{prop:speed}
Suppose that $d \geq 2$ and $k \geq d-1$.
Then there exists a constant $c_{d,k}$ with $0 < c_{d,k} \leq 2k+d+1$ such that, for all $n \in \ZP$,
\begin{align}
\label{eq:norm-drift}
 \Exp [ \| X_{n+2k+d+1}\| - \| X_n \| \mid \calF_n ] & \geq c_{d,k}, \as 
\end{align}
\end{proposition}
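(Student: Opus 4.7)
The upper bound $c_{d,k}\le 2k+d+1$ is immediate from $\|X_{n+j+1}-X_{n+j}\|\le 1$ and the triangle inequality. For the strict positivity, note that $\|X_{n+j+1}\|\ge X_{n+j+1}\cdot\hat X_{n+j}=\|X_{n+j}\|+(X_{n+j+1}-X_{n+j})\cdot\hat X_{n+j}$, so telescoping, the tower property and Proposition~\ref{prop:centrifugal} give
\[
\Exp[\|X_{n+2k+d+1}\|-\|X_n\|\mid\calF_n]\ge \sum_{j=0}^{2k+d}\Exp\Bigl[\Exp[(X_{n+j+1}-X_{n+j})\cdot\hat X_{n+j}\mid\calF_{n+j}]\Bigm|\calF_n\Bigr],
\]
every summand being non-negative. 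It thus suffices to identify a single step $j_0$ and an event $E\in\calF_{n+j_0}$, with $\Pr(E\mid\calF_n)\ge q>0$ uniformly over the past, on which the conditional one-step drift at $j_0$ is at least $\eta>0$; then $c_{d,k}\ge \eta q$.

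I would take $j_0=2k+d$ (the very last step) and build $E$ as the intersection of three phases. \emph{Phase~1} (steps $n\to n+k$): by Lemma~\ref{lem:fact0}, with conditional probability $\ge \delta^{dk}$ we have $(X_{n+1},\ldots,X_{n+k})\in\Pi^h(X_n)$, which by Lemma~\ref{lem:sufficient-for-gg} yields $G_{n+k}$. \emph{Phase~2} (steps $n+k\to n+2k$): on $G_{n+k}$, Lemma~\ref{lem:fact1} gives conditional probability $\ge\delta^{dk}$ that the walker follows $\Pi(X_{n+k})=\Pi^{\hat X_{n+k}}(X_{n+k})$, and a variant of Lemma~\ref{lem:sufficient-for-gg} with $\hat X_{n+k}$ replacing $h$ (valid because the separating hyperplanes work as long as $\hat X_{n+k}\cdot\hat X_{n+2k}$ is uniformly positive, which holds since the walker has moved outward in Phase~1) yields $G_{n+2k}$. \emph{Phase~3} (steps $n+2k\to n+2k+d$): exploit the positive density afforded by good geometry to prescribe $d$ further admissible positions along a chain in direction $\hat X_{n+2k}$ with mutually orthogonal transverse perturbations of size $O(\delta)$, chosen so that (i) each step is admissible and good geometry is preserved (so the conditional density is at least a constant times $\delta^{d}$ at each step), and (ii) the terminal configuration $\calX_{n+2k+d,k}\cup\{0,X_{n+2k+d}\}$ affinely spans $\R^d$ and the backward ball $B(X_{n+2k+d}-\tfrac{1}{2}\hat X_{n+2k+d};\delta)$ is contained in the obstruction cone $\cone(X_{n+2k+d};\calX_{n+2k+d,k}\cup\{0,X_{n+2k+d}\})$.

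On $E$, the forward ball $B(X_{n+2k+d}+\tfrac{1}{2}\hat X_{n+2k+d};\delta)\subseteq \calA^+$ has its reflection through the perpendicular hyperplane at $X_{n+2k+d}$ outside $\calA$ by condition~(ii); the reflection identity~\eqref{eq:radial-mean} from the proof of Proposition~\ref{prop:centrifugal} then yields that the conditional drift at step $j_0$ is bounded below by $(\tfrac{1}{2}-\delta)\vol{d}B(0;\delta)/\vol{d}B(0;1)=:\eta>0$. Combined with $\Pr(E\mid\calF_n)\ge q$ from the three-phase probability bound, this gives $c_{d,k}\ge \eta q>0$. The main obstacle is the explicit geometric construction and verification of Phase~3: in dimension $d\ge 3$ the hull of $\{0\}$ together with the Phase~1--2 trajectory has affine dimension at most~$2$, so the plain reflection argument gives only a non-negative (possibly vanishing) drift, and Phase~3 must supply the missing independent directions while preserving admissibility at each step --- this is why $d+1$ extra steps beyond the $2k$ used to set up good geometry appear in the statement.
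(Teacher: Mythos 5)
Your reduction (telescoping $\|X_{n+j+1}\|-\|X_{n+j}\|$, non-negativity from Proposition~\ref{prop:centrifugal}, and a single final step with drift $\geq\eta$ on an event of probability $\geq q$) is exactly the paper's skeleton, and Phases~1--2 match the paper's events $A_1,A_2$ (Lemmas~\ref{lem:fact0}, \ref{lem:sufficient-for-gg}, \ref{lem:fact1}). The gap is in Phase~3, and it is fatal as stated: your condition~(ii) asks that the backward ball $B(X_{n+2k+d}-\tfrac12\hat X_{n+2k+d};\delta)$ be contained in $\cone(X_{n+2k+d};\calX_{n+2k+d,k}\cup\{0,X_{n+2k+d}\})$. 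That ball is centred on the ray from $X_{n+2k+d}$ through the origin, i.e.\ on the generator of the cone in the direction $-\hat X_{n+2k+d}$, so containment forces $-\hat X_{n+2k+d}$ to lie in the \emph{interior} of the cone; this in turn requires the transverse components (orthogonal to $\hat X_{n+2k+d}$) of the directions to the other generators to positively span the $(d-1)$-dimensional transverse space, which needs at least $d$ generators with nonzero transverse part. But when $k=d-1$ (in particular $d=2$, $k=1$, the case of Theorem~\ref{thm:memory-one}) the memory at time $n+2k+d$ consists of only $d-1$ previous points besides the origin and the current point, and the origin contributes zero transverse component; so the cone has at most $d$ extreme rays, one of which is $-\hat X_{n+2k+d}$ itself, and no choice of "mutually orthogonal transverse perturbations" (indeed no configuration at all) can place the backward ball inside it. In $d=2$, $k=1$ the cone is a wedge with the origin-ray as an edge, and the ball always sticks halfway out. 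Hence your claimed bound $\eta=(\tfrac12-\delta)\vol{d}B(0;\delta)/\vol{d}B(0;1)$ is unobtainable and the argument collapses back to mere non-negativity.

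The paper avoids this by not trying to block the reflection of the whole forward axial ball. Instead, Phase~3 steers the walk along $y_i=X_{n+2k}+\tfrac12(e_1+\cdots+e_i)$ so that the last $d+1$ positions form an approximate simplex of uniformly positive volume; a small ball around its barycentre lies inside the convex hull (not merely the cone) and, crucially, strictly behind the hyperplane through $X_{n+2k+d}$ perpendicular to $\hat X_{n+2k+d}$ --- see \eqref{eq:ball-in-simplex} --- but it is \emph{off-axis}, so no surrounding of the direction $-\hat X_{n+2k+d}$ is needed. Its reflection is then an admissible set of measure $\asymp\delta^d$ inside $\calA^+\setminus S_x(\calA^-)$, and \eqref{eq:radial-mean} gives the drift bound $a=\delta^{d+1}/\nu_d$. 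Two secondary points: your assertion that Phase~2 yields $G_{n+2k}$ via a "variant" of Lemma~\ref{lem:sufficient-for-gg} is plausible but unproved and is not what the paper uses; and good geometry only guarantees positive density for the specific balls of $\Pi$, so prescribing transversely perturbed targets in Phase~3 requires its own separating-hyperplane estimates, which the paper carries out explicitly (the computations with the hyperplanes $H_i$ and the vectors $f_i$). To repair your proof you would have to replace condition~(ii) by the paper's "ball inside the full-dimensional hull, strictly behind the perpendicular hyperplane" condition, at which point you have reproduced the paper's argument.
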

\begin{proof}
We will show that there exist
constants $a, p >0$ (depending on $d$ and $k$) and an event $A \in \calF_{n+2k+d}$, 
such that  
\begin{align}
\label{eq:proj-drift-prob}
& {} \Pr ( A \mid \calF_n ) \geq p, \as, \text{ and } \\
\label{eq:proj-drift}
& {} \Exp [ ( X_{n+2k+d+1} - X_{n+2k+d} ) \cdot \hat X_{n+2k+d} \mid \calF_{n+2k+d} ]  \geq a, \text{ on } A .\end{align}
Note that for all $x , \Delta \in \R^d$,
  $\| x + \Delta \| \geq ( x + \Delta ) \cdot \hat x$,
so  $\| x + \Delta \| - \| x \| \geq \Delta \cdot \hat x$.
Hence, by Proposition~\ref{prop:centrifugal}, 
\begin{align*}
& {} \qquad {} \Exp [\| X_{n+2k+d+1} \| -  \| X_n \| \mid \calF_n ] \\
& = \Exp \bigg[ \sum_{i=0}^{2k+d} \Exp [\| X_{n+i+1} \| -  \| X_{n+i} \| \mid \calF_{n+i} ] \biggmid \calF_n \bigg]\\
& \geq 
  \Exp \bigg[ \sum_{i=0}^{2k+d}  \Exp [ (  X_{n+i+1} -  X_{n+i} ) \cdot \hat X_{n+i} \mid \calF_{n+i} ] \biggmid \calF_n \bigg] \\
&  \geq  \Exp \bigl[  \Exp [ ( X_{n+2k+d+1} - X_{n+2k+d} ) \cdot \hat X_{n+2k+d} \mid \calF_{n+2k+d} ] \1 (A) \bigmid \calF_n \bigr] ,
\end{align*}
which is bounded below by $ ap$, by the claims~\eqref{eq:proj-drift-prob} and~\eqref{eq:proj-drift}. This gives~\eqref{eq:norm-drift} with $c_{d,k} = ap$.
The rest of the proof establishes~\eqref{eq:proj-drift-prob} and~\eqref{eq:proj-drift}.

We describe the event $A$, which will comprise three successive events. 
Given $\calF_n$, let $h$ be the perpendicular unit vector to a tangent hyperplane at $X_n$ to   $\conv(\calX_{n,k} \cup \{ 0, X_n \})$,
pointing opposite to 
the convex hull.
Define the events
$A_1 = \{ (X_{n+1} , \ldots, X_{n+k} ) \in \Pi^h (X_n)\}$
and $A_2 = \{ (X_{n+k+ 1},\ldots, X_{n+2k}) \in \Pi (X_{n+k} ) \}$.
Then by Lemmas~\ref{lem:fact0}, \ref{lem:fact1}, and~\ref{lem:sufficient-for-gg},
we have that $\Pr (A_1 \cap A_2 \mid \calF_n ) \geq \delta^{2kd}$, a.s.

Suppose that $A_1 \cap A_2$ occurs and consider the situation at time $n+2k$.
Let $x_i = X_{n+k+i}$ for $0 \leq i \leq k$,
and let $x'_i = x_0 + \frac{i}{2} \hat x_0$.
On $A_2$, we have $\| x_i - x'_i \| \leq \delta$. 
Set $e_1 = \hat x_0$, 
 and let $\{ e_1, e_2, \ldots, e_d \}$
be an orthonormal basis for $\R^d$ containing $e_1$.

Next set 
$f_i = \sum_{j=1}^i e_j$ and let 
$y_i = x_k + \frac{1}{2} f_i$ for $0 \leq i \leq d$.
The idea is that, with positive probability, the process
will follow close to the path $y_0, y_1, \ldots, y_{d}$,
at which point it will have strictly positive drift after producing a convex hull which
contains, approximately, a simplex.
Set $z_i = X_{n+2k+i}$ for $0 \leq i \leq d$.
Define the events
\begin{align*} 
E_i := \{ X_{n+2k+i} \in B ( y_i ; \delta ) \} ~(1 \leq i \leq d), \text{ and }
A_3 := \cap_{i=1}^{d} E_i
 .\end{align*}
Then, on $E_i$, $\| z_i - y_i \| \leq \delta$. Suppose that $\delta>0$
is small enough so that $8 \delta \sqrt{d} < 1$.

Define hyperplanes $H_i = \{ y \in \R^d : ( y - z_i ) \cdot \hat f_{i+1} = 0 \}$.
First note that $z_0 = x_k$ and, for $0 \leq j < k$,
$(x_j - x_k ) \cdot e_1 \leq \frac{j-k}{2} + 2 \delta < - \delta$,
while $(y_1 - x_k) \cdot e_1 = \frac{1}{2} > \delta$,
so the hyperplane $H_0$ contains $x_k$ and separates
$0, x_0, x_1, \ldots, x_{k-1}$ from $B ( y_1 ; \delta)$. So, on $A_1 \cap A_2$,
we have $\Pr ( E_1 \mid \calF_n ) \geq \delta^d$. Now suppose that $1 \leq i \leq d-1$
and that $\cap_{j=1}^i E_j$ occurs.
For $0 \leq j \leq k$,
given $A_2 \cap E_i$, we have, noting that $f_i \cdot f_j = i \wedge j$ and $\| f_i \| \leq \sqrt{d}$,
\begin{align*}
 ( x_j - z_i ) \cdot \hat f_{i+1} & \leq (x_j' - y_i ) \cdot \hat f_{i+1} + 2 \delta \\
& \leq (x'_j - x'_k ) \cdot \hat f_{i+1} - \frac{1}{2} f_i \cdot \hat f_{i+1} + 3 \delta \\
& \leq \frac{j-k}{2} e_i \cdot \hat f_{i+1} - \frac{i}{2\sqrt{d}} + 3\delta \\
& \leq - \delta,
\end{align*}
provided that $8 \delta \sqrt{d} < 1$.
Similarly, for $1 \leq j < i$, given $E_1 \cap \cdots \cap E_i$,
\[ (z_j - z_i) \cdot \hat f_{i+1} \leq ( y_j - y_i) \cdot \hat f_{i+1} + 2\delta \leq -\frac{1}{2\sqrt{d}} + 2 \delta \leq - \delta .\]
On the other hand,
\[ (y_{i+1} - z_i ) \cdot \hat f_{i+1} \geq \frac{1}{2} e_{i+1} \cdot \hat f_{i+1} - \delta > \delta .\]
Thus the hyperplane $H_i$ contains $z_i$ and separates
$0, x_0, x_1, \ldots, x_{k}$ and $z_1, \ldots, z_{i-1}$ from $B ( y_{i+1} ; \delta)$.
Hence, on $A_1 \cap A_2 \cap ( \cap_{j=1}^i E_i )$, we have that $\Pr ( E_{i+1} \mid \calF_{n+2k+i} ) \geq \delta^d$.
Setting $A :=  A_1 \cap A_2 \cap A_3$, it follows that 
$\Pr (A \mid \calF_n ) \geq \delta^{2kd+d^2} =: p$ as required for~\eqref{eq:proj-drift-prob}.

It remains to prove~\eqref{eq:proj-drift}, i.e., to show that on $A$ there is a uniformly positive radial drift.
As above, let $x'_i = x_0 + \frac{i}{2} \hat x_0$ where $x_0 = X_{n+k}$.
Also let $y'_i = x'_k + \frac{1}{2} f_i$.
Define the simplex $\Delta'$ to be the convex polytope with
vertices $x'_k, y'_1, \ldots, y'_{d}$.
Define the barycentre of the vertices $w := \frac{1}{d+1}  ( x'_k + \sum_{i=1}^d y'_i ) 
= x'_k + \sum_{i=1}^d \frac{d-i+1}{2d+2} e_i$. 
Note that $y'_d = ( \| x_0 \| + \frac{k}{2} ) e_1  + \frac{1}{2} \sum_{j=1}^d e_j$, and so
\[ (w - y'_{d} ) \cdot y'_{d} 
= -\sum_{i=1}^d \frac{i}{2d+2} e_i \cdot y'_d 
= - \frac{1}{2d+2} \left( \| x_0 \| + \frac{k}{2} + \frac{d(d+1)}{4} \right).\]
Since $\| y'_d \| \leq \| x_0 \| + \frac{k}{2} + \frac{d}{2} \leq \| x_0 \| + \frac{k}{2} + \frac{d(d+1)}{4}$, it follows that
\[ (w - y'_d ) \cdot \hat y'_d \leq - \frac{1}{2d+2} .\] 
Let $\Delta$ denote the `approximate simplex' with vertices $X_{n+2k}, \ldots, X_{n+2k+d}$.
Then on $A$ we have that $\| X_{n+2k} - x'_k \| \leq \delta$ while $\| X_{n+2k+i} - y'_i\| \leq 2\delta$ for $1 \leq i \leq d$.
Now $(w - y'_d) \cdot \hat y'_d$ is continuous as a function of $x'_k$ and $y'_1, \ldots, y'_d$ away from $y'_d =0$,
so in particular we can choose $\delta >0$ small enough so that
\begin{equation}
\label{eq:ball-in-simplex}
 ( z - X_{n+2k+d} ) \cdot \hat X_{n+2k+d} \leq - \delta \text{ for all } z \in B ( w' ; \delta ) ,\end{equation}
where $w'$ is the barycentre of the vertices of $\Delta$.

 We claim that for $\delta$ small enough, $B (w'; \delta )$ is in the interior of $\Delta$.
Indeed, $w'$ is in the interior of $\Delta$ unless it degenerates to
a polytope of lower dimension. But $\vol{d} \Delta$ is a continuous function of
its vertices, and the volume is strictly positive when $\delta =0$ (since then $\Delta$ is a genuine simplex), so we can find $\delta >0$
small enough so that the claim holds. 
Hence, for small enough $\delta$,
$B (w'; \delta ) \subseteq \conv ( X_{n+2k}, \ldots, X_{n+2k+d} )$
and hence $B (w'; \delta ) \subseteq \conv ( \calX_{n+2k+d,k} \cup \{ 0 , X_{n+2k+d} \} )$.

Setting $x = X_{n+2k+d}$ and using analogous notation to the proof of
 Proposition~\ref{prop:centrifugal}, we have 
that $S_x ( B (w' ; \delta ) ) \subseteq \calA^+ \setminus S_x (\calA^- )$. Hence from~\eqref{eq:radial-mean}, on $A$,
\[ \Exp [ ( X_{n+2k+d+1} - X_{n+2k+d} ) \cdot \hat x \mid \calF_{n+2k+d} ]
\geq \frac{1}{\nu_d} \int_{S_x ( B ( w' ; \delta ) )} ( y-x) \cdot \hat x \ud y \geq \frac{1}{\nu_d} \delta^{d+1} ,\]
by~\eqref{eq:ball-in-simplex}. This gives~\eqref{eq:proj-drift} with $a=\delta^{d+1}/\nu_d$,
 and completes the proof.
\end{proof}

Having established a strictly positive radial drift, we can deduce that the
process has a positive `$\liminf$' speed. This is the next result.

\begin{corollary}
\label{cor:exponential-bound}
Suppose that $d \geq 2$ and $k \geq d-1$.
There exist constants $\rho := \rho_{d,k} >0$ and $n_{d,k} \in \N$, depending only on $d$ and $k$, such that
\begin{equation}
\label{eq:exponential-bound} \Pr ( \| X_n \| \leq  \rho n  ) \leq \re^{-\rho n} , \text{ for all } n \geq n_{d,k} .\end{equation}
Moreover, 
$\liminf_{n \to \infty} n^{-1} \| X_n \| \geq \rho$, a.s. 
\end{corollary}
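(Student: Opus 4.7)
The plan is to apply Azuma--Hoeffding concentration to the subsampled process $Z_m := \|X_{mM}\|$ with $M := 2k+d+1$, exploiting the uniform positive drift supplied by Proposition~\ref{prop:speed}. First I observe that $(Z_m)$ has bounded increments $|Z_m - Z_{m-1}| \leq M$ (by the reverse triangle inequality, since $\|X_{j+1} - X_j\| \leq 1$ for each $j$) and satisfies $\Exp[Z_m - Z_{m-1} \mid \calF_{(m-1)M}] \geq c_{d,k}$ by Proposition~\ref{prop:speed} applied at time $(m-1)M$. Decomposing
\[
Z_m = N_m + D_m, \qquad N_m := \sum_{i=1}^m \bigl( Z_i - Z_{i-1} - \Exp[Z_i - Z_{i-1} \mid \calF_{(i-1)M}] \bigr),
\]
makes $(N_m)_{m \geq 0}$ a martingale (with respect to $(\calF_{mM})$) whose increments are bounded in absolute value by $2M$, while $D_m \geq m c_{d,k}$.

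Next, the standard Azuma--Hoeffding inequality gives $\Pr(N_m \leq -t) \leq \exp(-t^2/(8 m M^2))$ for every $t > 0$, so taking $t = m c_{d,k}/2$ yields
\[
\Pr\bigl( Z_m \leq m c_{d,k}/2 \bigr) \leq \exp\!\bigl( - m c_{d,k}^2 / (32 M^2) \bigr).
\]
This is the main concentration estimate at the subsampled scale.

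Finally, to transfer to arbitrary $n$, I would write $n = mM + r$ with $0 \leq r < M$, and use $\|X_n\| \geq \|X_{mM}\| - M = Z_m - M$. Choosing $\rho > 0$ sufficiently small relative to $c_{d,k}/M$ ensures that, for all $n$ larger than some $n_{d,k}$, the event $\{\|X_n\| \leq \rho n\}$ is contained in $\{Z_m \leq m c_{d,k}/2\}$, and the exponential decay in $m$ translates (after shrinking $\rho$ if necessary to absorb the factor $M$) to the exponential decay in $n$ asserted in~\eqref{eq:exponential-bound}. The almost sure bound $\liminf_{n \to \infty} n^{-1} \|X_n\| \geq \rho$ then follows from the first Borel--Cantelli lemma applied along the sequence $\rho' < \rho$, since the exponential tail bounds are summable.

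The argument is essentially standard; there is no serious obstacle. The only care needed is bookkeeping the two scales, namely matching the constants in the subsampled-scale concentration ($c_{d,k}/2$, rate $c_{d,k}^2/(32 M^2)$) with a single positive $\rho$ that works for the full process.
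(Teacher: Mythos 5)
Your proposal is correct and follows essentially the same route as the paper: subsample at scale $2k+d+1$, use the uniform drift from Proposition~\ref{prop:speed}, apply a one-sided Azuma--Hoeffding bound (the paper phrases this via the submartingale $\|X_{m(2k+d+1)}\| - c_{d,k}m$, which is equivalent to your martingale-plus-drift decomposition), and then transfer to all $n$ and conclude by Borel--Cantelli. The only cosmetic difference is that the direct application of the summable tail bound already yields $\liminf_{n\to\infty} n^{-1}\|X_n\| \geq \rho$, so the detour through $\rho' < \rho$ is unnecessary.
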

\begin{proof}
Define the process $Z_m =\| X_{m(2k+d+1)} \| - c_{d,k} m$. Then, since $d \leq k+1$,
\[ | Z_{m+1} - Z_m | \leq | \| X_{(m+1)(2k+d+1)} \| - \| X_{m(2k+d+1)} \|  | +   c_{d,k} \leq 4k+2d +2 \leq 10 k, \as \]
Also, writing $\calF'_m = \calF_{m(2k+d+1)}$, we have
\[ \Exp [ Z_{m+1} - Z_m \mid \calF'_{m} ] = \Exp [  \| X_{(m+1)(2k+d+1)} \| - \|  X_{m(2k+d+1)} \|  \mid \calF_{m(2k+d+1)} ] -   c_{d,k} \geq 0, \as ,\]
by~\eqref{eq:norm-drift}. Hence $Z_m$ is a submartingale
with uniformly bounded increments, and we can apply the one-sided
Azuma--Hoeffding inequality (see Theorem~2.4.14 in~\cite{bluebook})
to obtain
\[ \Pr \Bigl( Z_{m} - Z_0 \leq -\frac{c_{d,k}}{2} m \Bigmid \calF'_0 \Bigr) \leq \exp \left( - \frac{c_{d,k}^2 m^2}{800 m k^2} \right) .\]
Hence, since $Z_0 = \| X_0\| = 0$, for some $\rho >0$ depending on $d$ and $k$, for all $m \in \ZP$,
\begin{equation}
\label{eq:exponential-bound-subseq}
 \Pr \Bigl( \| X_{m(2k+d+1)} \| \leq  \frac{c_{d,k}}{2}  m  \Bigr) \leq \re^{-\rho  m} . \end{equation}
Let $m = \left\lfloor \frac{n}{2k+d+1} \right\rfloor$.  
Since $n-2k-d-1 \leq m (2k+d+1) \leq n$, it follows that  
\begin{align*} \Pr \left( \| X_n \| \leq \frac{c_{d,k}}{11k} n \right) & = 
\Pr \left( \| X_{m (2k+d+1)} \| \leq \| X_n \| + 2k + d +1, \,  \| X_n \| \leq \frac{c_{d,k}}{11k} n \right) \\
& \leq \Pr \left( \| X_{m (2k+d+1)} \| \leq \frac{c_{d,k}}{2} m \right) ,
\end{align*}
for all $n$ sufficiently large. Then~\eqref{eq:exponential-bound-subseq} yields~\eqref{eq:exponential-bound}.
Finally, it follows from~\eqref{eq:exponential-bound} and the Borel--Cantelli lemma
that $\liminf_{n \to \infty} n^{-1} \| X_n \| \geq \rho$, a.s.
\end{proof}

\section{Renewal structure}
\label{sec:renewal}

Our strategy for establishing ballisticity is to show that, up to smaller order terms, 
there is a limiting positive radial drift and  the transverse fluctuations are not too big (cf.~Lemma~\ref{lem:ballistic} below).
As in Proposition~\ref{prop:speed},
it is clear that this property cannot be the case at every step of the walk. Our strategy is to find an embedded process
which has these properties at random times. We call these random times `renewals'. They are such that process executes a chain
of approximately radial jumps (cf.~Lemma~\ref{lem:fact1}). Such times  exhibit a symmetry which entails a positive radial drift,
and these times occur rather frequently, as we show in Lemma~\ref{lem:good-geometry} below. With Corollary~\ref{cor:exponential-bound}, 
this is already essentially enough to
establish a limiting direction. To establish a limiting speed, it is required in addition that the radial drift at these renewal
times, and the expected time between renewals, have limits; these quantities are not constant because the special r\^ole played by the origin means the process lacks homogeneity.
We address this with a coupling to a homogeneous modification of the process, which, roughly speaking, sends the origin away to infinity,
 as described in Section~\ref{sec:homogeneous}.

From this point on, we fix the constant $\delta \in (0,1/8)$.
Recall the definition of $\Pi (x) = \Pi^{\hat x} (x)$ from~\eqref{eq:pi-def}
and of $f$ from~\eqref{eq:f-def}. First we state
a consequence of Lemma~\ref{lem:fact1}.

\begin{corollary}
\label{cor:f-bound}
Let $n \geq k$. Set $\alpha := \delta^{dk}$.
For all $(y_1, \ldots, y_k )$, on the event $G_n$,
\[
  f ( y_1, \ldots, y_k \mid X_{n-k}, \ldots, X_n ) \geq   \frac{ \alpha \1_{\Pi ( X_n)} (y_1,\ldots,y_k ) }{ ( \vol{d} B(0;\delta) )^k}  . 
\]
\end{corollary}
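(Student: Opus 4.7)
The plan is to read the corollary as the density version of the measure-theoretic lower bound from Lemma~\ref{lem:fact1}. By Lemma~\ref{lem:Y-Markov}, the conditional distribution of $(X_{n+1},\ldots,X_{n+k})$ given $\calF_n$ has density $y \mapsto f(y \mid X_{n-k},\ldots,X_n)$ with respect to Lebesgue measure on $(\R^d)^k$, so for every Borel $\fB \subseteq (\R^d)^k$,
\[
\Pr((X_{n+1},\ldots,X_{n+k}) \in \fB \mid \calF_n)
= \int_{\fB} f(y_1,\ldots,y_k \mid X_{n-k},\ldots,X_n)\, \mathrm{d}y_1 \cdots \mathrm{d}y_k, \quad \text{a.s.}
\]

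Next, I would apply Lemma~\ref{lem:fact1} to an arbitrary Borel $\fB \subseteq \Pi(X_n)$. Combining the two displays, I obtain, on $G_n$,
\[
\int_{\fB} f(y_1,\ldots,y_k \mid X_{n-k},\ldots,X_n)\, \mathrm{d}y_1 \cdots \mathrm{d}y_k
\geq \frac{\alpha}{(\vol{d} B(0;\delta))^k} \, \vol{dk}(\fB),
\]
for every Borel $\fB \subseteq \Pi(X_n)$, where $\alpha = \delta^{dk}$. Since this holds for all such $\fB$, the Lebesgue differentiation theorem applied to the integrable function $f(\,\cdot\,\mid X_{n-k},\ldots,X_n)$ gives that for Lebesgue-a.e. $(y_1,\ldots,y_k) \in \Pi(X_n)$, the density satisfies the claimed pointwise bound.

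To upgrade the inequality from a.e.\ to everywhere on $\Pi(X_n)$, as stated in the corollary, I would note that densities are only determined up to Lebesgue-null sets, so we may take the version of $f$ defined by the explicit product in~\eqref{eq:f-def}. For this version, on $G_n$ and for $(y_1,\ldots,y_k) \in \Pi(X_n) \subseteq \calP(X_{n-k},\ldots,X_n)$ (the inclusion is the definition of $G_n$ at~\eqref{eq:G}), each factor $p$ in~\eqref{eq:f-def} is of the form $1/\vol{d}\calA(\cdot\, ;\,\cdot)$ with denominator at most $\nu_d$ by Lemma~\ref{lem:well-defined}; hence each factor is at least $1/\nu_d$ and the product is at least $1/\nu_d^k$, which matches $\alpha/(\vol{d} B(0;\delta))^k$. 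I don't anticipate any serious obstacle here: the only subtlety is the pointwise-versus-a.e.\ issue, and that is resolved as just indicated by fixing the canonical product version of $f$.
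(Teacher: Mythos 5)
Your proposal is correct. Its first half is the paper's own route: combine the integral identity from Lemma~\ref{lem:Y-Markov} with the measure lower bound of Lemma~\ref{lem:fact1} over Borel $\fB \subseteq \Pi(X_n)$, and pass from the integral inequality to the density; the paper simply asserts this passage, implicitly accepting the a.e./canonical-version reading that you make explicit via Lebesgue differentiation. The more interesting point is that your closing ``upgrade'' is in fact a complete standalone proof, and a more elementary one: on $G_n$ the definition~\eqref{eq:G} gives $\Pi(X_n) \subseteq \calP(X_{n-k},\ldots,X_n)$, so for every $(y_1,\ldots,y_k) \in \Pi(X_n)$ each indicator in the product~\eqref{eq:f-def} equals $1$ and each factor is $1/\vol{d}\calA(\,\cdot\,;\,\cdot\,) \geq 1/\nu_d$, whence $f \geq \nu_d^{-k}$ on $\Pi(X_n)$; since $\vol{d}B(0;\delta) = \delta^d \nu_d$ and $\alpha = \delta^{dk}$, one has $\alpha/(\vol{d}B(0;\delta))^k = \nu_d^{-k}$ exactly, which is the claimed bound. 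Thus neither Lemma~\ref{lem:fact1} nor the differentiation argument is actually needed, and the pointwise (not merely a.e.) statement comes for free. One cosmetic correction: the upper bound on the volumes of the admissible sets appearing in~\eqref{eq:f-def} should not be attributed to Lemma~\ref{lem:well-defined}, which concerns the sets $\calA(\calX_{m,k};X_m)$ along the actual trajectory; for the generic configurations arising along an admissible path the bound $\vol{d}\calA(\calX;x) \leq \nu_d$ is immediate from $\calA(\calX;x) \subseteq B(x;1)$ in~\eqref{eq:admissible1}, which is all you need.
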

\begin{proof}
Similarly to Lemma~\ref{lem:Y-Markov}, on the event $G_n \cap \{ \fB \subseteq \Pi (X_n ) \}$,
\[ \Pr ( (X_{n+1},\ldots, X_{n+k} ) \in \fB \mid X_{n-k}, \ldots, X_n ) = \int_\fB f (y_1 , \ldots, y_k \mid X_{n-k}, \ldots, X_n ) \ud y_1 \cdots \ud y_k .\]
Combined with Lemma~\ref{lem:fact1}, this means that, on $G_n$,
\[\int_\fB f (y_1 , \ldots, y_k \mid X_{n-k}, \ldots, X_n ) \ud y_1 \cdots \ud y_k \geq \alpha 
\frac{  \vol{dk} \fB}{ ( \vol{d} B(0;\delta) )^k} \1 \{ \fB \subseteq \Pi ( X_n) \} ,\]
which gives the result.
\end{proof}

Now we construct a version of $Y$ which exhibits the required renewal structure,
by introducing an additional source of randomness
via a sequence 
$V_1, V_2, \ldots$ of i.i.d.~Bernoulli random variables
with $\Pr ( V_i = 1 ) = \alpha = 1 -\Pr (V_i = 0)$, where
$\alpha \in (0,1)$ is the constant in Corollary~\ref{cor:f-bound}.
For now we call this new process $Y' = ( Y'_0, Y'_1, \ldots)$
with $Y'_m \in (\R^d)^{k+1}$;
we will soon show that $Y'$ has the same law as $Y$.
The process $Y'$ will be adapted to the filtration $\calF'_0, \calF'_1,\ldots$
defined by $\calF'_m := \sigma ( Y'_0, V_1, Y'_1, \ldots, V_m, Y'_m )$.
At the same time as constructing the process, we generate a sequence of \emph{renewal times}
as we shall describe. 
Roughly speaking, $m$ is a renewal time if $X$ has good geometry at time $mk$ and $V_{m+1} = 1$; it allows a construction of the process such that its future evolution after a renewal  
depends only on the current location $X_{mk}$, and not on the past.
Define the event $G'_{m} := \{ Y'_{m} \in  \calG \}$.

 To start with, we take $Y'_0$ to be distributed exactly as $Y_0$, as described in Lemma~\ref{lem:Y-Markov}.
  Given $Y'_0, Y'_1, \ldots, Y'_m$,
	suppose also that we have generated renewal times $\tau_1  < \tau_2 <\ldots < \tau_{J(m)}$.
Then we generate $Y'_{m+1}$ as follows.
\begin{enumerate}[leftmargin=*]
\item If $G'_{m}$ does not occur, 
then generate $Y'_{m+1}$ from $Y'_m$ using the transition function described in Lemma~\ref{lem:Y-Markov}.
\item If $G'_{m}$ does occur, then do the following.
 \begin{enumerate}[leftmargin=*]
\item If $V_{m+1}=1$, then  declare that $\tau_{J(m)+1} = m$ is the next renewal time, and 
set $Y'_{m+1} = (Y'_{m,k+1}, Z_{m+1} )$, where $Y'_{m,i}$ is the $i$th component of $Y'_m$
and the vector $Z_{m+1} \in (\R^d)^k$ is uniformly distributed on $\Pi ( Y'_{m,k+1} )$.
\item If $V_{m+1}=0$, then 
set $Y'_{m+1} = (Y'_{m,k+1}, Z_{m+1} )$, where now $Z_{m+1}$ is generated according to the density $\hat f ( y_1 , \ldots, y_k \mid Y'_m )$ on  $\calP(Y'_m)$
 given by
\begin{equation}
\label{eq:f-hat}  
\hat f ( y_1 , \ldots, y_k \mid Y'_m ) :=  \frac{1}{1 - \alpha} \! \left[ 
f ( y_1 ,\ldots, y_k \mid Y'_m ) 
 - \frac{\alpha \1_{\Pi ( Y'_{m,k+1} )}( y_1, \ldots, y_k )}{( \vol{d} B(0;\delta) )^k}  
\right] ,
\end{equation}
where $f ( \, \cdot \mid   \cdot\, )$ is defined at~\eqref{eq:f-def} and
$\alpha \in (0,1)$ is the constant in Corollary~\ref{cor:f-bound}.
\end{enumerate}
\end{enumerate}
By Corollary~\ref{cor:f-bound},  $\hat f$
as defined at~\eqref{eq:f-hat} 
is non-negative on $G'_{m}$, and
since, by Lemma~\ref{lem:Y-Markov},
\[ \int_{\calP(Y_m)} f (y_1, \ldots, y_k \mid Y_m ) \ud y_1 \cdots \ud y_k = \Pr ( Y_{m+1} \in \{ Y_{m,k+1} \} \times \calP(Y_m) ) = 1 ,\]
where $Y_{m,i}$ is the $i$th component of $Y_m$,
we have that, on $G'_{m}$,
\[ \int_{\calP(Y'_m)}
\hat f ( y_1 , \ldots, y_k \mid Y'_m ) \ud y_1 \cdots \ud y_k
= 1 ,\]
so $\hat f$ is indeed a probability kernel.

\begin{lemma}
\label{lem:YY}
The process $Y'$ has the same law as the process $Y$ described at Lemma~\ref{lem:Y-Markov}.
\end{lemma}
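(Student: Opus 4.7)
The plan is to verify that $Y'$ has the same initial distribution and the same one-step transition kernel as $Y$, and then conclude by induction that their finite-dimensional distributions agree. The initial distribution matches by construction: by design, $Y'_0$ is sampled from exactly the measure on $\{0\} \times \calP_0$ with density $f_0$ described in Lemma~\ref{lem:Y-Markov}. So the work is in verifying the one-step transition.

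For the transition step, I would condition on $Y'_m$ and analyse the two cases in the construction. If $G'_m$ does not occur, the construction explicitly uses the transition function of $Y$ given in Lemma~\ref{lem:Y-Markov}, so agreement is immediate. If $G'_m$ occurs, then, using the independence of $V_{m+1}$ from $\calF'_m$ (more precisely, from everything generated up to time $m$), the conditional density of the new $k$-tuple $Z_{m+1}$ given $Y'_m$ is the mixture
\begin{equation*}
 \alpha \cdot \frac{\1_{\Pi ( Y'_{m,k+1})} ( y_1, \ldots, y_k )}{ (\vol{d} B(0;\delta))^k } + (1-\alpha)\, \hat f ( y_1, \ldots, y_k \mid Y'_m) ,
\end{equation*}
which, by the definition of $\hat f$ in~\eqref{eq:f-hat}, equals $f ( y_1, \ldots, y_k \mid Y'_m)$ on the admissible set $\calP (Y'_m)$. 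Thus in either case the conditional density of $Y'_{m+1}$ given $Y'_m$ is exactly the kernel identified in Lemma~\ref{lem:Y-Markov}.

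There are two small things to check along the way, and these are the only places where something could go wrong. The first is that $\hat f$ as defined by~\eqref{eq:f-hat} is indeed a probability density: non-negativity on $\calP (Y'_m)$ follows from Corollary~\ref{cor:f-bound} (which guarantees $f \geq \alpha (\vol{d} B(0;\delta))^{-k}$ on $\Pi (Y'_{m,k+1})$ when $G'_m$ holds), and the integral equals one because $\Pi (Y'_{m,k+1}) \subseteq \calP(Y'_m)$ on $G'_m$ so the two contributions to the integral are $1$ and $\alpha$ respectively and divide by $1-\alpha$ after the subtraction; this last bit is already verified in the text just after the definition of $\hat f$. The second is measurability of the decision rule based on $G'_m$, which is immediate since $G'_m \in \sigma(Y'_m) \subseteq \calF'_m$.

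With the one-step kernels matched, I would conclude by induction on $m$: assuming $(Y'_0, \ldots, Y'_m) \eqd (Y_0, \ldots, Y_m)$, the Markov property of $Y$ combined with the matching of the transition kernels gives $(Y'_0, \ldots, Y'_{m+1}) \eqd (Y_0, \ldots, Y_{m+1})$. The main (very mild) obstacle is to keep track that the extra randomness $V_{m+1}$ is independent of $\calF'_m$, so that the mixture really does produce the stated conditional density; the argument is essentially bookkeeping, not anything substantive.
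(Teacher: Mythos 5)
Your proposal is correct and follows essentially the same route as the paper: match the initial law by construction, note the kernel agrees off $G'_m$ by definition, and on $G'_m$ observe that the $\alpha$/$(1-\alpha)$ mixture of the uniform law on $\Pi(Y'_{m,k+1})$ and $\hat f$ reconstitutes $f$ via~\eqref{eq:f-hat} (using Corollary~\ref{cor:f-bound} for non-negativity of $\hat f$). The only cosmetic difference is that you phrase the uniform component via its density $\1_{\Pi}/(\vol{d} B(0;\delta))^k$ while the paper writes it as a ratio of volumes, which is the same thing since $\vol{dk}\Pi(x) = (\vol{d} B(0;\delta))^k$.
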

\begin{proof}
By construction, $Y'_0$ has the same law as $Y_0$. Also by construction, we have that for Borel $\fB \subseteq (\R^d)^k$,
\begin{align}
\label{eq:YY}
  & \Pr ( Y'_{m+1} \in \{ x_k \} \times \fB \mid Y'_m = (x_0, \ldots, x_k) ) \nn\\
	& {} \qquad\qquad\qquad {} = 
\int_{\fB \cap \calP ( x_0, \ldots, x_k)} f ( y_1,  \ldots, y_k \mid x_0, \ldots, x_k ) \ud y_1 \cdots \ud y_k ,
\end{align}
on the complement of $G'_{m}$. It remains to show that~\eqref{eq:YY} also holds on $G'_{m}$.
For Borel $\fB \subseteq (\R^d)^k$, we have
\begin{align*}
& {} \qquad {} \Pr ( Y'_{m+1} \in \{ Y'_{m,k+1} \} \times \fB   \mid \calF'_m )\\
& = \Pr ( Y'_{m+1} \in \{ Y'_{m,k+1} \} \times \fB , \, V_{m+1} = 1 \mid \calF'_m ) +  \Pr ( Y'_{m+1} \in \{ Y'_{m,k+1} \} \times \fB , \, V_{m+1} = 0 \mid \calF'_m )\\
& = \alpha \frac{\vol{dk} ( \fB \cap \Pi ( Y'_{m,k+1} ) )}{\vol{dk}  \Pi ( Y'_{m,k+1} )}   + (1-\alpha) \int_{\fB \cap \calP ( Y'_m )} \hat f ( y_1,\ldots, y_k \mid Y'_m ) \ud y_1 \cdots \ud y_k \\
& = \int_{\fB \cap \calP ( Y'_m )}  f ( y_1,\ldots, y_k \mid Y'_m ) \ud y_1 \cdots \ud y_k ,
\end{align*}
by equation~\eqref{eq:f-hat}. This completes the proof.
\end{proof}

Since from $Y$ we can recover $X$, in view of Lemma~\ref{lem:YY}, we will from now on work on an enlarged probability space
and assume that the process  $Y$ (and hence $X$) is constructed as $Y'$, along with its renewal times.
We finish this section by showing that the renewal times must occur rather frequently.

\begin{lemma} 
\label{lem:good-geometry}
With $\alpha \in (0,1)$ the constant appearing in Corollary~\ref{cor:f-bound}, we have 
\begin{equation}
\label{eq:good-geometry-bound}
 \Pr ( G_{n+k} \mid \calF_n ) \geq \alpha, \as, \text{ for all } n \in \ZP. \end{equation}
Moreover, with $c>0$  given by $\re^{-c}=1-\alpha^2$, we have
\begin{equation}
\label{eq:tau-exponential}
\Pr ( \tau_{n+1} - \tau_n \geq 2 r \mid \calF'_{\tau_n+1} ) \leq  \re^{-c r} , \as, \text{ for all } r \geq 0 \text{ and all } n \in \N.\end{equation}
\end{lemma}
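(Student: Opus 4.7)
The first claim, \eqref{eq:good-geometry-bound}, follows directly by combining two results already established. Lemma~\ref{lem:fact0} gives $\Pr((X_{n+1},\ldots,X_{n+k}) \in \Pi^h(X_n) \mid \calF_n) \geq \delta^{dk} = \alpha$, and Lemma~\ref{lem:sufficient-for-gg} says that on this event $G_{n+k}$ must occur. Hence $\Pr(G_{n+k}\mid\calF_n) \geq \alpha$ almost surely.

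For the second claim \eqref{eq:tau-exponential}, I introduce the event $R_m := G'_m \cap \{V_{m+1}=1\}$ marking a renewal at time $m$, so that $\{\tau_{n+1}-\tau_n \geq 2r\} = \bigcap_{j=1}^{2r-1} R^c_{\tau_n+j}$. Since $V_{m+1}$ is Bernoulli$(\alpha)$ independent of $\calF'_m$, while $G'_m\in\calF'_m$, we have $\Pr(R_m\mid\calF'_m) = \alpha \1_{G'_m}$. Using Lemma~\ref{lem:YY} (so that, conditional on $\calF'_{m-1}$, the next coordinate $Y'_m$ has the same kernel $f$ as in Lemma~\ref{lem:Y-Markov}) together with part \eqref{eq:good-geometry-bound} applied at the $X$-timescale $n = mk$, I obtain $\Pr(G'_m\mid\calF'_{m-1})\geq\alpha$. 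The tower property then yields the single-step bound $\Pr(R_m\mid\calF'_{m-1}) = \alpha \Pr(G'_m\mid\calF'_{m-1})\geq \alpha^2$, from which the pair bound $\Pr(R^c_m\cap R^c_{m+1}\mid\calF'_{m-1}) \leq \Pr(R^c_{m+1}\mid\calF'_{m-1}) \leq 1-\alpha^2$ follows by one further level of conditioning.

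The sharp exponent requires one additional observation: immediately after a renewal at $\tau_n$, the construction places $(X_{(\tau_n+1)k+1},\ldots,X_{(\tau_n+2)k})$ uniformly inside $\Pi(X_{(\tau_n+1)k})$, and so by (the proof of) Lemma~\ref{lem:sufficient-for-gg} applied with $h = \hat X_{(\tau_n+1)k}$ (the only geometric input is $h\cdot\hat x \geq 0$, which holds trivially here), the event $G'_{\tau_n+1}$ holds a.s., giving $\Pr(R^c_{\tau_n+1}\mid\calF'_{\tau_n+1})=1-\alpha$. I then decompose $\bigcap_{j=1}^{2r-1}R^c_{\tau_n+j}$ as the singleton $R^c_{\tau_n+1}$ together with the disjoint pairs $(R^c_{\tau_n+2j},R^c_{\tau_n+2j+1})$ for $j=1,\ldots,r-1$, and iterate the tower outwards from the innermost pair (each of which is measurable with respect to the $\sigma$-algebra required for the pair bound) to conclude
\[
 \Pr(\tau_{n+1}-\tau_n \geq 2r \mid \calF'_{\tau_n+1}) \leq (1-\alpha)(1-\alpha^2)^{r-1} \leq (1-\alpha^2)^r = \re^{-cr}.
\]
The main obstacle is the careful bookkeeping of the $\sigma$-algebras, since $R_m$ is $\calF'_{m+1}$- but not $\calF'_m$-measurable; this forces the pair argument rather than a naive chain, and the post-renewal good-geometry observation is what replaces the leading factor $1$ by $1-\alpha$ and thereby secures the exponent $r$ instead of $r-1$.
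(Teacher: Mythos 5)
Your proof is correct and follows essentially the paper's route: \eqref{eq:good-geometry-bound} is obtained exactly as in the paper by combining Lemmas~\ref{lem:fact0} and~\ref{lem:sufficient-for-gg}, and \eqref{eq:tau-exponential} via the events $R_m = G'_m \cap \{V_{m+1}=1\}$ (the paper's $A_m$), the bound $\Pr(R_{m}\mid\calF'_{m-1})\geq\alpha^2$, and conditioning two indices at a time precisely because $R_m$ is $\calF'_{m+1}$- but not $\calF'_m$-measurable. Your additional observation that $G'_{\tau_n+1}$ holds automatically after a renewal is correct (the post-renewal block lies in $\Pi(X_{(\tau_n+1)k})$, and the argument of Lemma~\ref{lem:sufficient-for-gg} only uses the chain structure of the new points together with $h\cdot\hat x\geq 0$, which holds for $h=\hat X_{(\tau_n+1)k}$ since $X_{(\tau_n+2)k}\cdot\hat X_{(\tau_n+1)k}\geq \|X_{(\tau_n+1)k}\|+\tfrac{k}{2}-\delta>0$); this actually delivers the stated constant $\re^{-cr}$ exactly, whereas the paper's iteration, taken literally, yields $\re^{-c(r-1)}$, an immaterial difference. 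One small repair to your bookkeeping: when peeling the pair indexed $(\tau_n+2j,\tau_n+2j+1)$, the earlier events (maximal index $\tau_n+2j-1$) are measurable with respect to $\calF'_{\tau_n+2j}$ but not $\calF'_{\tau_n+2j-1}$, so you should condition on $\calF'_{\tau_n+2j}$ and use only $\Pr(R^\rc_{\tau_n+2j+1}\mid\calF'_{\tau_n+2j})\leq 1-\alpha^2$ (discarding the even-indexed factor), which gives the same product bound.
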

\begin{proof}
 The statement~\eqref{eq:good-geometry-bound} follows from Lemmas~\ref{lem:fact0} and~\ref{lem:sufficient-for-gg}.
To prove~\eqref{eq:tau-exponential}, first note that $\tau_{n}+j$ is a stopping time for $\calF'_0, \calF'_1,\ldots$
for all $j \geq 1$. 
Also, constructing $X$ and $Y$ as described above, we have $G'_m = G_{(m+1)k}$. 
Let $A_m = G'_m \cap \{ V_{m+1} = 1\} \in \calF'_{m+1}$. Then by~\eqref{eq:good-geometry-bound}
we have that, for all $m \in\ZP$,
\begin{equation}
\label{eq:alpha-sq}
 \Pr ( A_{m+1} \mid \calF'_m ) \geq \alpha \Pr ( G_{(m+2)k} \mid \calF_{(m+1)k} ) \geq \alpha^2 , \as \end{equation}
Hence for $r \geq 1$,
\begin{align*}
 \Pr ( \tau_{n+1} - \tau_n \geq r+ 2 \mid \calF'_{\tau_n +1} ) 
& \leq \Exp \bigl[ \Pr ( A^\rc_{\tau_n+2+r} \mid \calF'_{\tau_n +1+r} ) \1 \{ \tau_{n+1} - \tau_n \geq r \} \bigmid \calF'_{\tau_n+1} \bigr] .\\
& \leq (1-\alpha^2) \Pr ( \tau_{n+1} - \tau_n \geq r \mid \calF'_{\tau_n+1} ) ,
\end{align*}
by~\eqref{eq:alpha-sq}. Then~\eqref{eq:tau-exponential} follows.
\end{proof}

Thus Lemma~\ref{lem:good-geometry} shows that the sequence
$\tau_1, \tau_2, \ldots$ does not terminate,
and its increments have exponentially bounded tails.
Consider the sequence $Y_{\tau_n}$. This is a Markov chain,
but its law is not translation invariant, due to the r\^ole of the origin.
The next section introduces a related process, whose increment law \emph{is} translation invariant,
and which therefore has i.i.d.~increments. In particular, it has a well-defined
\emph{radial drift} which entails ballisticity, and, crucially, it is close enough in behaviour to $Y_{\tau_n}$ to be able to deduce our theorems.

\section{A homogeneous process}
\label{sec:homogeneous}

For any fixed vector $\ell \in \Sp{d-1}$ we construct a homogeneous process in the direction $\ell$. Loosely speaking, it amounts to replacing the origin by a point at infinity in the direction $-\ell$. Let us give a precise definition.

For $\calX \subseteq \R^d$, we consider a semi-infinite cylinder with direction $-\ell$,
\begin{equation}\label{def:convell}
 {\conv}_\ell  (\calX) := \left\{  z -r \ell :  z \in \conv  \calX, \, r\geq 0 \right\}  .
\end{equation}
The set of $\ell$-\emph{admissible states} from $x \in \R^d$
with history $\calX \subseteq \R^d$ is 
\begin{align}  
\label{eq:admissiblell1}
\calA^\ell ( \calX ; x ) & := \cl \big\{ y \in B ( x; 1 ) : (x,y] \cap {\conv}_\ell ( \calX \cup \{ x \} ) = \emptyset \big\} \\
& = \cl \big( B(x;1) \setminus \cone ( x ; {\conv}_\ell ( \calX \cup \{ x\})  ) \big)  . 
\label{eq:admissiblell2}
\end{align}
We start with an observation relating the admissible states.

\begin{lemma}\label{lem:ell=hatx}
Suppose that $x \neq 0$ and $\calX \subset \R^d$. Then it holds that
\begin{equation} \label{eq:ObsA}
\calA^\ell(\calX;x)= \calA (\calX; x), \text{ for } \ell=\hat x.
\end{equation}
\end{lemma}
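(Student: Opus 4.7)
The plan is to exploit the equivalent formulations~\eqref{eq:admissible2} and~\eqref{eq:admissiblell2}, which reduce~\eqref{eq:ObsA} to establishing coincidence of the cones
\[
\cone(x; \calX \cup \{0, x\}) = \cone(x; \conv_\ell(\calX \cup \{x\}))
\]
when $\ell = \hat x$; indeed both formulations involve the same ball $B(x;1)$ and the same outer closure, so equality of the cones propagates to equality of the admissible sets.

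The inclusion "$\subseteq$" is immediate: $\calX \cup \{x\} \subseteq \conv(\calX \cup \{x\}) \subseteq \conv_\ell(\calX \cup \{x\})$ by taking $r = 0$ in~\eqref{def:convell}, while $\ell = \hat x$ and $x \neq 0$ yield the identity $0 = x - \|x\|\ell$, placing $0 \in \conv_\ell(\calX \cup \{x\})$ via $z = x$, $r = \|x\|$. Hence $\calX \cup \{0, x\} \subseteq \conv_\ell(\calX \cup \{x\})$, and applying $\cone(x; \cdot)$ delivers the inclusion.

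For "$\supseteq$" the plan is to establish the stronger statement $\conv_\ell(\calX \cup \{x\}) \subseteq \cone(x; \calX \cup \{0, x\})$, from which the cone inclusion follows by idempotence of $\cone(x; \cdot)$. Take $w = z - r\ell$ with $z \in \conv(\calX \cup \{x\})$ and $r \geq 0$, and write $z = \beta_0 x + \sum_i \beta_i y_i$ as a convex combination with $y_i \in \calX$. Setting $\gamma := r/\|x\|$, so that $-r\ell = \gamma(0 - x)$, a direct computation gives
\[
w - x = \sum_i \beta_i (y_i - x) + \gamma (0 - x).
\]
Let $\Sigma := \gamma + \sum_i \beta_i$. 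If $\Sigma = 0$ then $w = x$, which lies trivially in the cone. Otherwise $u := \Sigma^{-1}\bigl( \gamma \cdot 0 + \sum_i \beta_i y_i \bigr)$ is a convex combination of elements of $\calX \cup \{0\}$, so $u \in \conv(\calX \cup \{0, x\}) \subseteq \cone(x; \calX \cup \{0, x\})$, and $w = x + \Sigma(u - x)$ belongs to the cone by its closure under the dilation $v \mapsto x + \alpha(v - x)$, $\alpha \geq 0$.

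The only substantive step is the algebraic rewriting in "$\supseteq$", whose key observation is that $\ell = \hat x$ makes $-\ell$ a positive multiple of $0 - x$; the translation $-r\ell$ can therefore be absorbed into a convex combination involving the origin, at the cost of rescaling from $x$. Beyond setting up this identification, the argument is routine and no serious obstacle is anticipated.
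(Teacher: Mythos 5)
Your proof is correct and takes essentially the same route as the paper's: one inclusion rests on the observation that $0 \in \conv_\ell(\calX \cup \{x\})$ when $\ell = \hat x$, and the other on showing that the cylinder $\conv_\ell(\calX \cup \{x\})$ is contained in the cone $\cone(x; \calX \cup \{0,x\})$. The only cosmetic difference is that you verify this last containment by an explicit convex-combination computation, whereas the paper deduces it from the fact that a convex cone with vertex $x$ containing $0$ is invariant under the translations $z \mapsto z - \lambda x$, $\lambda \geq 0$.
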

\begin{proof} 
When  $\ell = \hat x$, the origin  belongs to $ \conv_\ell (\calX \cup \{x\})$, which is a convex set.  Then $\conv (\calX \cup \{0,x\}) \subseteq
\conv_\ell (\calX \cup \{x\})$ and so comparison of~\eqref{eq:admissible1} with~\eqref{eq:admissiblell1} shows that $\calA^\ell(\calX;x) \subseteq \calA (\calX; x)$. 

Conversely,
consider the convex cone $C = \cone ( x ; \calX \cup \{0,x\}  )$;
the cone $C$ has vertex $x$ and contains $0$, so that the translate $C -\lambda x$ ($\lambda \geq 0$)
is contained in $C$. That is,  for any $z \in \conv(  \calX \cup \{x\})$ we have $z - \lambda x \in C$.
In other words,
$C$ is a convex cone that contains the cylinder ${\conv}_\ell ( \calX \cup \{ x\})$,
and hence $\cone ( x ; {\conv}_\ell ( \calX \cup \{ x\})  ) \subseteq C$.
Comparison of~\eqref{eq:admissible2} and~\eqref{eq:admissiblell2} shows that 
$\calA (\calX; x) \subseteq \calA^\ell(\calX;x)$, and the lemma is proved.
\end{proof}

We define the process $X^\ell := (X_0^\ell , X_1^\ell, \ldots )$
analogously to $X$.
Specifically, we set $\calX^\ell_{n,k} := \{ X^\ell_j : \max(1,n-k) \leq j \leq n-1 \}$,
 take $X^\ell_0 =0$, and suppose that, for $n \in \ZP$,
\[
 \Pr ( X^\ell_{n+1} \in A \mid X^\ell_0, X^\ell_1, \ldots, X^\ell_n ) = \int_A p^\ell ( y \mid \calX^\ell_{n,k} ; X^\ell_n ) \ud y,
\]
for all Borel sets $A \subseteq \R^d$, where  
\begin{equation} 
\label{def:walk8}
p^\ell (y \mid \calX ; x ) = \frac{1}{\vol{d} \calA^\ell ( \calX  ; x)} \1_{\calA^\ell ( \calX  ; x)}(y)
\end{equation} 
if $\vol{d} \calA^\ell ( \calX  ; x) > 0$.
This process is well defined, as shown by the following analogue 
of Lemma~\ref{lem:well-defined}; the proof is similar.

\begin{lemma}
\label{lem:well-defined-ell}
The process $X^\ell_0, X^\ell_1, X^\ell_2, \ldots$ is well defined, and for all $n \in \ZP$,
\[   \frac{\nu_d}{2} \leq 
	\vol{d} \calA^\ell ( \calX^\ell_{n,k} ; X^\ell_n)  \leq \nu_d . \]
\end{lemma}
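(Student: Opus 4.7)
The plan is to argue by induction on $n$, mirroring the proof of Lemma~\ref{lem:well-defined} with the semi-infinite cylinder ${\conv}_\ell$ from~\eqref{def:convell} playing the role of the convex hull $\conv$, and with the cone form~\eqref{eq:admissiblell2} replacing~\eqref{eq:admissible2}. The upper bound $\vol{d} \calA^\ell(\calX^\ell_{n,k}; X^\ell_n) \leq \nu_d$ is immediate from $\calA^\ell(\calX;x) \subseteq B(x;1)$, so the substance is the lower bound $\nu_d/2$, which simultaneously guarantees that the transition density~\eqref{def:walk8} is well-defined and permits the inductive construction of $X^\ell_{m+1}$.

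For the base case $n=0$ we have $\calX^\ell_{0,k} = \emptyset$ and $X^\ell_0 = 0$, so ${\conv}_\ell(\{0\})$ is the single ray $\{-r\ell : r \geq 0\}$, which has $d$-dimensional Lebesgue measure zero; hence $\vol{d} \calA^\ell(\emptyset; 0) = \nu_d$. For the inductive step, assume the conclusion through time $m$, so that $X^\ell_{m+1}$ is well-defined. Admissibility together with~\eqref{eq:admissiblell2} forces $X^\ell_{m+1}$ to lie outside the interior of $\cone(X^\ell_m; {\conv}_\ell(\calX^\ell_{m,k} \cup \{X^\ell_m\}))$, and a fortiori outside the interior of ${\conv}_\ell(\calX^\ell_{m,k} \cup \{X^\ell_m\})$. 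Since the updated history satisfies $\calX^\ell_{m+1,k} \subseteq \calX^\ell_{m,k} \cup \{X^\ell_m\}$, we obtain ${\conv}_\ell(\calX^\ell_{m+1,k}) \subseteq {\conv}_\ell(\calX^\ell_{m,k} \cup \{X^\ell_m\})$, and hence $X^\ell_{m+1}$ is a boundary point of the closed convex set ${\conv}_\ell(\calX^\ell_{m+1,k} \cup \{X^\ell_{m+1}\})$.

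The supporting hyperplane theorem then furnishes a hyperplane $H$ through $X^\ell_{m+1}$ with the entire cylinder lying in one closed half-space, so that the opposite closed half of $B(X^\ell_{m+1}; 1)$ is contained in $\calA^\ell(\calX^\ell_{m+1,k}; X^\ell_{m+1})$, giving the lower bound $\nu_d/2$. The main subtlety, compared with the bounded-polytope argument of Lemma~\ref{lem:well-defined}, is that the set being avoided is unbounded in the $-\ell$ direction, so one must verify that the supporting hyperplane can be chosen compatibly with this unboundedness. This is automatic: for the cylinder to lie on one side of $H$ its outward normal must have non-negative inner product with $\ell$, a constraint satisfied by any separating hyperplane produced by Hahn--Banach, since the cylinder is a closed convex set and $X^\ell_{m+1}$ lies on its boundary.
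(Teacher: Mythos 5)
Your proof is correct and follows essentially the same route as the paper, which simply adapts the induction of Lemma~\ref{lem:well-defined}: the new point lies outside the interior of the previously avoided set, a supporting hyperplane at $X^\ell_{m+1}$ then bounds the updated cylinder, and the opposite half of $B(X^\ell_{m+1};1)$ is $\ell$-admissible, giving the $\nu_d/2$ lower bound. You also correctly flag the one genuinely new point relative to the bounded-hull case, namely that any closed half-space containing a nonempty $-\ell$-cylinder automatically has outward normal $h$ with $h\cdot\ell\geq 0$, which is what guarantees that the hyperplane supporting the old avoided set also bounds the new cylinder ${\conv}_\ell(\calX^\ell_{m+1,k}\cup\{X^\ell_{m+1}\})$.
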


A sequence $y_1,\ldots ,y_k \in \R^d$ is called an $\ell$-\emph{admissible path} from history 
$x_0, x_1, \ldots, x_k$ if 
$y_1  \in \calA^\ell ( x_0, \ldots, x_{k-1} ; x_k )$, 
$y_2 \in \calA^\ell( x_1, \ldots, x_{k-1} , x_k ; y_1 )$,
$y_3 \in \calA^\ell (x_2,\ldots, x_k, y_1; y_2)$,
and so on, up to
$y_k \in \calA^\ell (x_{k}, y_1 \ldots, y_{k-2} ; y_{k-1} )$.
Let $\calP^\ell ( x_0, x_1, \ldots, x_k )$ denote the set
of all $\ell$-admissible paths from history $x_0, x_1, \ldots, x_k$.

Let $\calF^\ell_n := \sigma (X_0^\ell, \ldots, X_n^\ell)$.
For $\delta \in (0,1/8)$ and $x \in \R^d$,
recall from~\eqref{eq:pi-def} that $\Pi^\ell (x)  =  \prod_{i=1}^k B  ( x +  \tfrac {i}{2} \ell ; \delta  )$. Also, define
\[
 \calG^\ell := \left\{ (x_0,\ldots,x_k) \in (\R^d)^{k+1} : x_k \neq 0, \text{ and } \Pi^\ell (x_k) \subseteq \calP^\ell ( x_0, \ldots, x_k) \right\}.
\]
For $n \geq k$, let $G^\ell_n \in \calF^\ell_n$ denote the event $G_n^\ell := \{ (X^\ell_{n-k},\ldots, X^\ell_n) \in  \calG^\ell \}$;
if $G^\ell_n$ occurs, we say that $X^\ell$ has \emph{good geometry} at time $n$.

The following analogue of Lemma~\ref{lem:fact1} is proved in the same way.

\begin{lemma}
\label{lem:fact1-ell} 
Let  $\alpha \in (0,1)$ be the constant appearing in Corollary~\ref{cor:f-bound}.
Then for all $n \geq k$ and all Borel $\fB \subseteq (\R^d)^k$, on the event $G^\ell_n$,
\[
\Pr (  (X^\ell_{n + 1},\ldots, X^\ell_{n+k}) \in \fB  \mid \calF^\ell_n ) \geq \alpha   \frac{  \vol{dk} \fB}{ ( \vol{d} B(0;\delta) )^k} \1 \{ \fB \subseteq \Pi^\ell ( X_n) \} .
\]
\end{lemma}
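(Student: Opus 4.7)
The plan is to follow the proof of Lemma~\ref{lem:fact1} verbatim, substituting the $\ell$-versions of the objects ($\calA^\ell$, $\calP^\ell$, $\Pi^\ell$, $\calG^\ell$, $\calF^\ell_n$) for their counterparts and using Lemma~\ref{lem:well-defined-ell} in place of Lemma~\ref{lem:well-defined}. Since the claim is linear in $\fB$, a standard $\pi$-system argument reduces to the case where $\fB = \prod_{i=1}^k \fB_i$ is a product of Borel sets, and since the indicator kills the right-hand side otherwise, I may further assume $\fB \subseteq \Pi^\ell (X^\ell_n)$ (strictly speaking $\Pi^{\hat X^\ell_n}$ has been replaced by $\Pi^\ell$ in the homogeneous setting, which is the right choice since the cone at~\eqref{def:convell} points in direction $-\ell$).

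On $G^\ell_n$, the defining property of $\calG^\ell$ is $\Pi^\ell(X^\ell_n) \subseteq \calP^\ell(X^\ell_{n-k}, \ldots, X^\ell_n)$, i.e., every sequence $(y_1,\ldots,y_k) \in \Pi^\ell(X^\ell_n)$ is $\ell$-admissible from the history $(X^\ell_{n-k},\ldots,X^\ell_n)$. In particular $\fB_1 \subseteq B(X^\ell_n + \tfrac{1}{2}\ell;\delta) \subseteq \calA^\ell(\calX^\ell_{n,k}; X^\ell_n)$, so by the upper bound in Lemma~\ref{lem:well-defined-ell},
\[
\Pr ( X^\ell_{n+1} \in \fB_1 \mid \calF^\ell_n ) = \frac{\vol{d} \fB_1}{\vol{d} \calA^\ell(\calX^\ell_{n,k}; X^\ell_n)} \geq \frac{\vol{d} \fB_1}{\nu_d} \geq \delta^d \frac{\vol{d} \fB_1}{\vol{d} B(0;\delta)},
\]
using $\nu_d = \delta^{-d} \vol{d} B(0;\delta)$. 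If $k = 1$ this already matches the bound since $\alpha = \delta^{dk}$ by Corollary~\ref{cor:f-bound}. For $k \geq 2$, I iterate: conditionally on $G^\ell_n \cap \bigcap_{j=1}^i \{X^\ell_{n+j} \in \fB_j\}$, the definition of $\calP^\ell$ together with membership of $(X^\ell_{n+1},\ldots,X^\ell_{n+i})$ in the first $i$ factors of $\Pi^\ell(X^\ell_n)$ extended by any $y \in \fB_{i+1} \subseteq B(X^\ell_n + \tfrac{i+1}{2}\ell;\delta)$ implies $\fB_{i+1} \subseteq \calA^\ell(\calX^\ell_{n+i,k}; X^\ell_{n+i})$. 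Another application of Lemma~\ref{lem:well-defined-ell} gives
\[
\Pr ( X^\ell_{n+i+1} \in \fB_{i+1} \mid \calF^\ell_{n+i} ) \geq \delta^d \frac{\vol{d} \fB_{i+1}}{\vol{d} B(0;\delta)},
\]
and chaining by the tower property through $i=0,\ldots,k-1$ produces the factor $\delta^{dk} = \alpha$ together with $\vol{dk} \fB / (\vol{d} B(0;\delta))^k$.

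The only conceptual point that is not entirely mechanical is verifying the inductive admissibility step: one must check that the $\ell$-admissibility of the realized partial path, as opposed to a purely geometric path through the centres of the $\Pi^\ell$-balls, follows from membership in $\calG^\ell$. This is immediate from the pointwise nature of the product $\Pi^\ell(x) = \prod_{i=1}^k B(x + \tfrac{i}{2}\ell;\delta)$ in~\eqref{eq:pi-def}: any choice of $y_j \in B(x + \tfrac{j}{2}\ell;\delta)$ gives a path in $\Pi^\ell(x)$, and by definition of $\calG^\ell$ this path is $\ell$-admissible regardless of which representative $y_j$ is chosen. Hence the argument is robust to the actual realizations, and the proof goes through essentially word for word.
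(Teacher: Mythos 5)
Your proposal is correct and is exactly the paper's intended argument: the paper proves Lemma~\ref{lem:fact1-ell} simply by noting it is ``proved in the same way'' as Lemma~\ref{lem:fact1}, and you have carried out that same iteration (conditioning step by step, using the $\ell$-admissibility of every path through the balls of $\Pi^\ell$ guaranteed on $G^\ell_n$, and the volume bound of Lemma~\ref{lem:well-defined-ell}) with the correct substitutions, including reading $\Pi^\ell(X_n)$ as $\Pi^\ell(X^\ell_n)$.
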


For $m \in \ZP$ define
\[ Y^\ell_{m} := (X^\ell_{m k},X^\ell_{m k +1},\ldots,X^\ell_{(m+1)k} ). \]
Now $Y^\ell = (Y_0^\ell, Y_1^\ell, \ldots)$ is a Markov chain and satisfies a version of Lemma~\ref{lem:Y-Markov}.
Moreover, we may assume that $Y^\ell$ is constructed along with its renewal
times $\tau^\ell_1, \tau^\ell_2, \ldots$, analogously to the construction of $Y'$ described
in Section~\ref{sec:renewal},
with $\Pi^\ell$ replacing $\Pi$, $f^\ell$ replacing $f$,
 and $\hat f^\ell$ replacing $\hat f$, where $f^\ell$ is defined by the analogue of~\eqref{eq:f-def}
with $p^\ell$ instead of $p$, and 
\begin{align*}
\hat f^\ell ( y_1 ,\ldots , y_k \mid x_0, \ldots, x_k ) 
= \frac{1}{1-\alpha} \left[ f^\ell (y_1,\ldots,y_k \mid x_0, \ldots, x_k) - \frac{\alpha \1_{\Pi^\ell (x_k)} (y_1,\ldots,y_k)}{(\vol{d} B (0;\delta))^k} \right] .
\end{align*}
Let $Y^\ell_{m,i}$ denote the $i$th component of $Y^\ell_m$,
and set $W^\ell_n := Y^\ell_{\tau^\ell_n,k+1} = X^\ell_{k \tau^\ell_n +k}$.

\begin{proposition}
\label{prop:iid-ell}
The sequence $(W^\ell_{n}; n \geq 1)$ is a homogeneous random walk,
that is, $(W^\ell_{n+1}-W^\ell_{n}; n \geq 1)$ 
is an i.i.d.~sequence. Moreover,
$\Exp \| W^\ell_{n+1}-W^\ell_{n} \| < \infty$ and 
\[ \Exp \left[ W^\ell_{n+1}-W^\ell_{n} \right] = u_{d,k} \ell ,\]
for a constant $u_{d,k}$ which does not depend on $\ell$. Finally,
the inter-renewal times $(\tau^\ell_{n+1} - \tau^\ell_n; n \geq 1)$
are i.i.d.~with $\Exp [ \tau^\ell_{n+1} - \tau^\ell_n ] = \lambda_{d,k}$
for a constant $\lambda_{d,k} \in (0,\infty)$ depending only on $d$ and $k$,
and 
such that, with $c >0$ the constant from Lemma \ref{lem:good-geometry},
\begin{equation}
\label{eq:tau-exponential-ell}
\Pr ( \tau^\ell_{n+1} - \tau^\ell_n \geq 2 r ) \leq  \re^{-c r} , \text{ for all } r \geq 0 .\end{equation}
\end{proposition}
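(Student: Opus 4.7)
The proof rests on two basic symmetries of the homogeneous dynamics together with the explicit structure of the renewal construction.

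The first symmetry is translation invariance: the set $\conv_\ell$ at~\eqref{def:convell} satisfies $\conv_\ell(\calX + v) = v + \conv_\ell(\calX)$, so by~\eqref{eq:admissiblell2} we have $\calA^\ell(\calX + v; x + v) = v + \calA^\ell(\calX; x)$, and hence the transition density $p^\ell$ at~\eqref{def:walk8} depends only on differences; the same invariance propagates to $f^\ell$ and, since $\Pi^\ell(x+v) = v + \Pi^\ell(x)$, also to $\hat f^\ell$. The second symmetry is rotation equivariance: for any orthogonal $R$ on $\R^d$, the whole construction of $X^\ell$ (including the residual kernel $\hat f^\ell$ and the uniform draws on $\Pi^\ell$) is carried out verbatim with $R\ell$ in place of $\ell$, yielding $X^{R\ell} \stackrel{d}{=} R X^\ell$.

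I would then identify the ``fresh-start'' state. At a renewal time $\tau^\ell_n = m$, the renewal rule of Section~\ref{sec:renewal}, transferred to the homogeneous setting, gives $Y^\ell_{m+1} = (W^\ell_n, Z)$ with $Z$ uniformly distributed on $\Pi^\ell(W^\ell_n)$ and independent of $\calF^\ell_{km}$. Subtracting $W^\ell_n$, the shifted state $(0, Z - W^\ell_n)$ is uniform on $\{0\} \times \Pi^\ell(0)$ independently of the past. Combined with translation invariance of $Y^\ell$, this means that the shifted post-renewal Markov chain $(Y^\ell_{m+j} - W^\ell_n)_{j \geq 1}$ has a law that depends neither on $W^\ell_n$ nor on $n$. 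Since the event defining the next renewal involves only the $G'$-events (which are translation invariant) and the i.i.d.\ Bernoullis $V_j$, the pair $(\tau^\ell_{n+1} - \tau^\ell_n,\, W^\ell_{n+1} - W^\ell_n)$ is a measurable functional of this shifted chain, proving i.i.d.

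For the exponential tail~\eqref{eq:tau-exponential-ell}, Lemma~\ref{lem:fact1-ell} is the exact analogue of the minorization underlying Lemma~\ref{lem:good-geometry}, so its proof carries over verbatim, giving $\lambda_{d,k} := \Exp[\tau^\ell_{n+1} - \tau^\ell_n] < \infty$; it is trivially bounded below by $1$. The bound $\|W^\ell_{n+1} - W^\ell_n\| \leq k(\tau^\ell_{n+1} - \tau^\ell_n)$ (since each $X^\ell$-increment has norm at most $1$ and a cycle contains $k(\tau^\ell_{n+1} - \tau^\ell_n)$ such steps) then yields $\Exp \| W^\ell_{n+1} - W^\ell_n \| \leq k \lambda_{d,k} < \infty$.

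Finally, I would use rotation equivariance to pin down the form of the mean. For any orthogonal $R$, the equivariance $X^{R\ell} \stackrel{d}{=} R X^\ell$ yields $\tau^{R\ell}_{n+1} - \tau^{R\ell}_n \stackrel{d}{=} \tau^\ell_{n+1} - \tau^\ell_n$, so $\lambda_{d,k}$ is independent of $\ell$, and $\Exp[W^{R\ell}_{n+1} - W^{R\ell}_n] = R\, \Exp[W^\ell_{n+1} - W^\ell_n]$. Specializing to $R$ in the stabilizer of $\ell$ forces the mean to lie on the line spanned by $\ell$, so it has the form $u(\ell)\ell$; applying then a general $R$ that sends $\ell$ to $\ell'$ shows $u(\ell) = u(\ell')$, so $u_{d,k} := u(\ell)$ is a constant depending only on $d$ and $k$. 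The main (and only mildly delicate) obstacle is to verify cleanly that the renewal minorization step preserves translation invariance of the kernel, so that the shifted post-renewal chain really is distributionally identical across $n$; this is essentially a repetition of the bookkeeping already carried out in Lemma~\ref{lem:YY}.
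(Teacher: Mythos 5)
Your proposal is correct and follows essentially the same route as the paper: the renewal construction plus translation invariance of the $\ell$-dynamics gives i.i.d.\ cycles, the bound $\|W^\ell_{n+1}-W^\ell_n\|\leq k(\tau^\ell_{n+1}-\tau^\ell_n)$ together with the Lemma~\ref{lem:good-geometry}-style exponential tail gives integrability, and rotational symmetry (invariance under the stabilizer of $\ell$, plus equivariance to remove the $\ell$-dependence of the constant) identifies the mean as $u_{d,k}\ell$. Your extra bookkeeping (explicit fresh-start state uniform on $\Pi^\ell(0)$, the equivariance $X^{R\ell}\eqd RX^\ell$) only spells out what the paper asserts in one line, so no further comment is needed.
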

\begin{proof}
By the renewal construction and the 
fact that for the $\ell$-process the transition function is translation invariant,
 \[ (Y^\ell_{\tau^\ell_j+1}- W^\ell_j, Y^\ell_{\tau^\ell_j +2}- W^\ell_j, \ldots, Y^\ell_{\tau^\ell_{j+1}}- W^\ell_j )
\text{ is an i.i.d.~sequence over $j \geq 1$}, \] 
where e.g.~$Y^\ell_{\tau^\ell_{j+1}}- W^\ell_j$ is the vector
with components $Y^\ell_{\tau^\ell_{j+1},i}- W^\ell_j$. Thus $W^\ell_{j+1}-W^\ell_j$ is also i.i.d.
Similarly, $\tau^{\ell}_{n+1} - \tau^\ell_n$ are i.i.d., so that 
$\Exp [ \tau^{\ell}_{n+1} - \tau^\ell_n ] =\lambda_{d,k}$
does not depend on $n$, and essentially the same argument as Lemma~\ref{lem:good-geometry}
gives the exponential bound~\eqref{eq:tau-exponential-ell}.

Next observe that
\[  \Exp  \left\|  W^\ell_{2}-W^\ell_{1} \right\| \leq k \Exp [ \tau^\ell_{2} - \tau^\ell_1 ] < \infty.  \]
The distribution of $ W^\ell_{2}-W^\ell_{1}$
is symmetric with respect to $\ell$, i.e., invariant
under any orthogonal transformation of $\R^d$ that leaves $\ell$ fixed.
Hence $\Exp [  W^\ell_{2}-W^\ell_{1} ] = u_{d,k} \ell$
for some $u_{d,k} \in \R$, which does not depend on $\ell$.
\end{proof}

\section{Coupling the processes}
\label{sec:coupling}

In this section we describe a coupling construction used to approximate the process $Y_m$ between times $\tau_n$ and $\tau_{n+1}$
by the process $Y^\ell_m$, where $\ell$ is fixed as $\ell = \hat Y_{\tau_n,k+1}$. 
We simultaneously construct the processes $Y$ and $Y^\ell$,
and their subsequent renewal times,  
essentially via the constructions described in Sections~\ref{sec:renewal} and~\ref{sec:homogeneous},
but with `maximal' exploitation of common randomness.

Our primary process we again denote by $Y$,
where $Y_n \in (\R^d)^{k+1}$, and we denote $Y_m = (X_{mk}, \ldots, X_{(m+1)k})$
in components, so the process $Y$ yields the process $X$.
	Let $Y_{m,i}$ denote the $i$th component of $Y_m$, so $Y_{m,i} = X_{mk+i-1}$.
	Given $\calF'_{\tau_n+1}$ (recall that $\tau_n+1$ is a stopping time), we will
 generate $Y_{\tau_n+2}, \ldots, Y_{\tau_{n+1}+1}$,
and, at the same time,
 generate $Y^\ell_{\tau_n+2}, \ldots, Y^\ell_{\tau_{n+1}+1}$,
where we couple the two processes and their renewal times in a maximal way (see below for formalities) 
starting at $Y^\ell_{\tau_n} = Y_{\tau_n}$
 and using the same underlying sequence $V_1, V_2, \ldots$. We stress that $\ell = \hat Y_{\tau_n,k+1}$ is kept fixed.

Before describing the coupling formally, we recall the following fact (see e.g.~\cite[p.~19]{lindvall}): 
If $X$ and $Y$ are random variables on $\R^p$ then there exists a \emph{maximal coupling}, i.e., a law on $(X,Y)$
such that $2 \Pr ( X \neq Y ) = \| \Pr ( X \in \, \cdot \, ) - \Pr ( Y \in \, \cdot \, ) \|_{\rm TV}$,
where $\| \,\cdot\, \|_{\rm TV}$ denotes total variation distance, which for measures $\mu_1$ and $\mu_2$ on $\R^p$ is 
defined by $\| \mu_1 - \mu_2 \|_{\rm TV} := \sup_B | \mu_1 (B) - \mu_2 (B) |$ where the supremum is over
Borel sets $B \subseteq \R^p$.

Here is the coupling construction.
As before, let
$V_1, V_2, \ldots$ be a sequence of i.i.d.~Bernoulli random variables
with $\Pr ( V_i = 1 ) = \alpha = 1 -\Pr (V_i = 0)$, where
$\alpha \in (0,1)$ is the constant in Corollary~\ref{cor:f-bound}.
The joint construction of $(Y_m, Y^\ell_m)$ will be adapted to the filtration $\calF'_{\tau_n+1}, \calF'_{\tau_n+2},\ldots$ (thus we
enlarge the previous filtration as necessary).
Recall that $G_{m} = \{ Y_{m} \in  \calG \}$ and $G_m^\ell = \{ Y_m^\ell \in \calG^\ell \}$. 

We begin by taking $Y^\ell_{\tau_n} = Y_{\tau_n}$.
Let $m \geq \tau_n$. If $Y_m \neq Y^\ell_m$, we generate
$Y_{m+1}, Y_{m+2}, \ldots$ and $Y^\ell_{m+1}, Y^\ell_{m+2}, \ldots$, and any associated renewal times, independently using
the constructions described previously in Sections~\ref{sec:renewal} and~\ref{sec:homogeneous}.
If $Y_m = Y^\ell_m$, then we generate $Y_{m+1}$ and $Y^\ell_{m+1}$ as follows.
\begin{enumerate}[leftmargin=*]
\item On the event $G_m^\rc \cap (G_m^\ell )^\rc$,
when neither process exhibits good geometry,
generate $Y_{m+1}$ and $Y^\ell_{m+1}$ via the maximal coupling of the corresponding marginal transition laws from $Y_m = Y^\ell_m$.
\item On the event $G_m \cap G_m^\ell$, when both processes exhibit good geometry, then:
 \begin{enumerate}[leftmargin=*]
\item If $V_{m+1} = 1$, 
declare that a renewal occurs for both processes ($\tau_{n+1}=\tau^\ell_{n+1} = m$) and 
set $Y_{m+1} = (Y_{m,k+1},Z_{k+1})$ and  $Y^\ell_{m+1} = (Y_{m,k+1},Z^\ell_{k+1})$ where
$Z_{k+1}$ and $Z^\ell_{k+1}$ are generated via a maximal coupling of 
the uniform laws on $\Pi ( Y_{m,k+1} )$ and $\Pi^\ell ( Y_{m,k+1} )$, respectively. Generate the subsequent trajectories 
of $Y$ and $Y^\ell$ independently.
\item If $V_{m+1} = 0$, 
set $Y_{m+1} = (Y_{m,k+1},Z_{k+1})$ and  $Y^\ell_{m+1} = (Y_{m,k+1},Z^\ell_{k+1})$ where now 
$Z_{k+1}$ and $Z^\ell_{k+1}$ are generated via a maximal coupling of 
the laws
corresponding to the densities $\hat f ( \, \cdot \mid Y_m)$ and $\hat f^\ell (\, \cdot \mid Y_m )$.
\end{enumerate}
\item On the event $G_m \sd G_m^\ell$ (where `$\sd$' denotes the symmetric difference),
generate  
$Y_{m+1}, Y_{m+2}, \ldots$ and $Y^\ell_{m+1}, Y^\ell_{m+2}, \ldots$, and any associated renewal times, independently.
\end{enumerate}

This construction gives $(Y_m, Y^\ell_m)$ for $m \geq \tau_n$
with the correct marginal distributions.
Let $E_n$ denote the event that the coupling `succeeds' between times $\tau_n$ and $\tau_{n+1}$, i.e.,
\begin{equation}
\label{def:En}
 E_n := \left\{ Y_m^\ell = Y_m \text{ for all } m \in \{\tau_n+1, \ldots, \tau_{n+1} \}, \text{ and } \tau^\ell_{n+1} = \tau_{n+1} \right\} .
\end{equation}
The effectiveness of the coupling is based on the following result,
whose proof we defer to the end of this section. Write $\log^2 n := (\log n)^2$.

\begin{proposition}
\label{prop:coupling_succeeds}
Let $E_n$ be as defined at~\eqref{def:En}.
There exists a constant $C \in \RP$ such that a.s., for all but finitely many $n \in \N$,
\[ \Pr (E_n \mid \calF'_{\tau_{n}+1} ) \geq 1 - \frac{C \log^2 n}{n} . \]
\end{proposition}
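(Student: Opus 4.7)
The plan is to show that between renewals $\tau_n$ and $\tau_{n+1}$ the per-step total variation distance between the transition laws of $Y$ and $Y^\ell$ (with $\ell = \hat Y_{\tau_n, k+1}$) is of order $O(\log n / n)$, and that at most $O(\log n)$ steps separate the two renewals; since maximal couplings are used at each step on the event $Y_m = Y^\ell_m$, the coupling fails at each step with probability equal to the per-step TV distance, and a union bound then yields the claimed bound.

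To control the relevant scales, I would combine Corollary~\ref{cor:exponential-bound} with Borel--Cantelli to get $\|X_m\| \geq \rho m$ for all but finitely many $m$, a.s.; since $\tau_n \geq n-1$, this gives $\|X_{k\tau_n+k}\| \geq c_0 n$ for all large $n$. Lemma~\ref{lem:good-geometry} then yields $\Pr(\tau_{n+1} - \tau_n \geq K \log n \mid \calF'_{\tau_n + 1}) \leq n^{-cK/2}$, negligible compared with $\log^2 n / n$ for $K$ large. On the intersection of these good events, for every $m \in [k\tau_n+1, k\tau_{n+1}]$ the walk stays within $O(\log n)$ of $X_{k\tau_n+k}$, so $\|X_m\| \geq c_0 n/2$ and the angle $\theta_m$ between $\hat X_m$ and $\ell$ is $O(\log n / n)$.

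The crux is the per-step TV bound. By Lemma~\ref{lem:ell=hatx}, $\calA(\calX;x) = \calA^\ell(\calX;x)$ exactly when $\ell = \hat x$. More generally, $\calA(\calX;x) \sd \calA^\ell(\calX;x) = B(x;1) \cap (C \sd C^\ell)$ where $C = \cone(x; \calX \cup \{0,x\})$ and $C^\ell = \cone(x; {\conv}_\ell(\calX \cup \{x\}))$; these two cones agree on all contributions from $\calX$ and differ only by replacing the ray from $x$ toward $0$ (direction $-\hat x$) with the ray from $x$ in direction $-\ell$. When these two rays differ by angle $\theta_m$, a direct convex-geometric estimate yields an angular sliver of $d$-volume $O(\theta_m)$. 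Combined with the lower bound on $\vol{d} \calA$ from Lemmas~\ref{lem:well-defined} and~\ref{lem:well-defined-ell}, this gives per-step TV distance $O(\theta_m)$ between the single-step laws of $X$ and $X^\ell$, hence $O(k \log n / n) = O(\log n / n)$ for each $k$-step transition of $Y$ versus $Y^\ell$. An analogous estimate shows that the good-geometry events $G_m$ and $G^\ell_m$, and the reference uniform laws on $\Pi(X_{mk+k})$ vs.\ $\Pi^\ell(X_{mk+k})$ used at renewal candidates, differ at the same TV order.

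Summing per-step failure probabilities over the $O(\log n)$ $Y$-steps between $\tau_n$ and $\tau_{n+1}$ then bounds the total coupling failure probability by $O(\log^2 n / n)$. The main obstacle I anticipate is the geometric estimate on $C \sd C^\ell$: one must argue, uniformly in the history $\calX$, that replacing the ray toward the origin by the ray in direction $-\ell$ perturbs the cone by a volume at most a constant multiple of $\theta_m$, without any pathological amplification coming from the positions of the $x_i$'s; and one has to verify that the symmetric difference of the good-geometry events $G_m \sd G^\ell_m$ can be absorbed into an error of the same order. Once the geometric step is in place, the rest is a straightforward union bound exploiting the defining property of maximal couplings (failure probability equal to the TV distance between marginals).
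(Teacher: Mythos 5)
Your proposal follows essentially the same route as the paper's proof: truncate $\tau_{n+1}-\tau_n$ at $O(\log n)$ using the exponential tail~\eqref{eq:tau-exponential}, and on that event bound the per-step coupling failure by total-variation estimates of order $\|\ell-\hat x\|+\|x\|^{-1}=O(\log n/n)$ for the $k$-step kernels, the uniform laws on $\Pi$ versus $\Pi^\ell$, and the good-geometry events, then sum over the $O(\log n)$ steps between renewals. The geometric ``sliver'' estimate you flag as the main obstacle is exactly the content of the paper's Lemma~\ref{lem:approximation} (in particular~\eqref{eq:couplage} and~\eqref{eq:ggdiff}), proved there by parametrizing the segment from $\hat x$ to $\ell$ and bounding the derivative of the symmetric-difference volume by the uniformly bounded surface measure of the boundary of the admissible set.
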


The fact that the coupling succeeds with high probability leads to the following
key result, which quantifies how well the homogeneous process approximates the
real process between renewal times.

\begin{corollary}
\label{cor:renewal-drift}
Let $W_n:= Y_{\tau_n +1,k+1}$. Then the following hold.
\begin{itemize}
\item[(i)]
Let $\lambda_{d,k}$ be the constant appearing in Proposition~\ref{prop:iid-ell}.
Then, for all $\gamma \in (0,1)$, 
\[ \lim_{n \to \infty} n^\gamma \left| \Exp [ \tau_{n+1} - \tau_n \mid \calF'_{\tau_n+1} ] - \lambda_{d,k}  \right|= 0, \as \]
\item[(ii)]
For all $p >0$, there is a constant $B \in \RP$ (depending on $p$, $d$, and $k$) such that
\[ \Exp [ \| W_{n+1} - W_n \|^p \mid \calF'_{\tau_n+1} ] \leq B , \as, \text{ for all } n \in \N. \]
\item[(iii)]
Let $u_{d,k}$ be the constant appearing in Proposition~\ref{prop:iid-ell}.
Then, for all $\gamma \in (0,1)$, 
\[ \lim_{n \to \infty} n^\gamma \left\| \Exp [ W_{n+1} - W_n \mid \calF'_{\tau_n+1} ] -u_{d,k} \hat W_n \right\|= 0, \as\]
\end{itemize}
\end{corollary}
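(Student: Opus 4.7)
The plan is to use Proposition~\ref{prop:coupling_succeeds} to decompose each of the three conditional expectations according to the coupling event $E_n$, then to leverage the i.i.d.\ structure of the coupled homogeneous process (Proposition~\ref{prop:iid-ell}) on $E_n$ while controlling the contribution from $E_n^\rc$ via integrability. Part~(ii) is handled first, since it does not require the coupling: because $X$ takes unit steps, $\|W_{n+1}-W_n\| \leq k(\tau_{n+1}-\tau_n)$, and the uniform exponential tail~\eqref{eq:tau-exponential} from Lemma~\ref{lem:good-geometry} yields uniform bounds on $\Exp[(\tau_{n+1}-\tau_n)^p \mid \calF'_{\tau_n+1}]$ for every $p\ge 1$.

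For (i), I would write
\[
\Exp[\tau_{n+1}-\tau_n \mid \calF'_{\tau_n+1}] = \Exp[(\tau_{n+1}-\tau_n)\1_{E_n} \mid \calF'_{\tau_n+1}] + \Exp[(\tau_{n+1}-\tau_n)\1_{E_n^\rc} \mid \calF'_{\tau_n+1}].
\]
On $E_n$ the real inter-renewal time coincides with $\tau^\ell_{n+1}-\tau^\ell_n$, whose expectation is $\lambda_{d,k}$ by Proposition~\ref{prop:iid-ell}, so the first term equals $\lambda_{d,k} - \Exp[(\tau^\ell_{n+1}-\tau^\ell_n)\1_{E_n^\rc} \mid \calF'_{\tau_n+1}]$. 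H\"older's inequality with conjugate exponents $p,q$, combined with the uniform exponential tails \eqref{eq:tau-exponential} and \eqref{eq:tau-exponential-ell} and the bound $\Pr(E_n^\rc \mid \calF'_{\tau_n+1}) \le C\log^2 n/n$ from Proposition~\ref{prop:coupling_succeeds}, controls the residual error by $C'(\log^2 n/n)^{1/q}$; taking $q$ close to $1$ beats any $n^{-\gamma}$ with $\gamma<1$.

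For (iii), the same decomposition applied to $W_{n+1}-W_n$, using the moment bound from (ii) in the H\"older step, yields $\Exp[W_{n+1}-W_n \mid \calF'_{\tau_n+1}] = u_{d,k}\ell + o(n^{-\gamma})$ with $\ell := \hat Y_{\tau_n,k+1}$, provided one first identifies the correct $\ell$-homogeneous quantity that matches $W_{n+1}-W_n$ on $E_n$ and invokes translation invariance of $Y^\ell$ together with Proposition~\ref{prop:iid-ell}; the boundary "radial push" of expected size $\tfrac{k}{2}\ell$ at each renewal must be tracked so that $W_n$ and $W_{n+1}$, which sit one macro-step past their respective renewals, align with the $W^\ell_n$ convention of Proposition~\ref{prop:iid-ell}. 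Finally, to replace $\ell$ by $\hat W_n$: since $\tau_n$ is a renewal, by the construction of Section~\ref{sec:renewal} the point $W_n$ lies in $B(\tilde W_n + \tfrac{k}{2}\ell ; \delta)$ where $\tilde W_n := Y_{\tau_n,k+1}$, and Corollary~\ref{cor:exponential-bound} combined with $\tau_n \ge n$ gives $\|\tilde W_n\| \ge \rho k n$ for all $n$ sufficiently large a.s.; an elementary computation then yields $\|\hat W_n - \ell\| = O(1/n)$ a.s., which is absorbed into the $o(n^{-\gamma})$ error.

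The main obstacle is the convention mismatch between Sections~\ref{sec:renewal} and~\ref{sec:homogeneous}: $W_n = Y_{\tau_n+1,k+1}$ sits one macro-step past the renewal while $W^\ell_n$ of Proposition~\ref{prop:iid-ell} sits exactly at the $\ell$-renewal, and the coupling on $E_n$ matches the two processes only on the interior block $[\tau_n+1, \tau_{n+1}]$ rather than through the subsequent renewal update. Identifying which $\ell$-homogeneous increment equals $W_{n+1}-W_n$ on $E_n$, and verifying that its conditional expectation given $\calF'_{\tau_n+1}$ agrees with $u_{d,k}\ell$ up to $O(1/n)$ boundary corrections coming from the maximal coupling at the renewal step and from the shape of the configuration at $\tau_n+1$, requires careful bookkeeping.
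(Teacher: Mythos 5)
Your proposal follows essentially the same route as the paper's proof: part (ii) via $\|W_{n+1}-W_n\|\leq k(\tau_{n+1}-\tau_n)$ and the exponential tail~\eqref{eq:tau-exponential}, and parts (i) and (iii) via the decomposition over $E_n$ and $E_n^\rc$, H\"older with Proposition~\ref{prop:coupling_succeeds}, identification with the homogeneous process on $E_n$ through Proposition~\ref{prop:iid-ell}, and the replacement of $\ell=\hat Y_{\tau_n,k+1}$ by $\hat W_n$ using~\eqref{eq:xy3} and the $\liminf$ speed bound. The indexing mismatch you flag (with $W_n$ one macro-step past the renewal) is real but harmless: the paper simply asserts the identity $W_{n+1}-W_n=W^\ell_{n+1}-W^\ell_n$ on $E_n$, and the resulting boundary correction of order $O(1/n)$ is absorbed into the $o(n^{-\gamma})$ error exactly as you propose.
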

\begin{proof}
For part~(i),  with $E_n$ as defined at~\eqref{def:En}, we have that
\begin{align*}
\Exp [ \tau_{n+1} - \tau_n \mid \calF'_{\tau_n+1} ] & = \Exp [ ( \tau_{n+1} - \tau_n ) \1 (E_n) \mid \calF'_{\tau_n+1} ]
+ \Exp [ ( \tau_{n+1} - \tau_n ) \1 (E^\rc_n) \mid \calF'_{\tau_n+1} ] .\end{align*}
Then, the (conditional) H\"older inequality implies that for all $p, q >1$
with $p^{-1} + q^{-1} = 1$,
\begin{align}
\label{eq:holder}
\left| \Exp [ ( \tau_{n+1} - \tau_n ) \1 (E^\rc_n) \mid \calF'_{\tau_n+1} ] \right|
& \leq  \left( \Exp [ (\tau_{n+1} - \tau_n)^p \mid  \calF'_{\tau_n+1} ] \right)^{1/p} \left( \Pr ( E^\rc_n \mid  \calF'_{\tau_n+1} )\right)^{1/q} \nonumber\\
& \leq  C_p    \left( \frac{\log^2 n}{n} \right)^{1/q} , \as,
\end{align}
for some  constant $C_p$ and all but finitely many~$n$,
by~\eqref{eq:tau-exponential} and Proposition~\ref{prop:coupling_succeeds}. In particular,
given $\gamma \in (0,1)$,
we may choose $q$ close enough to $1$ (and hence $p$ sufficiently large) so that this last bound is $o (n^{-\gamma})$.
On the other hand, on the event $E_n$ we have $\tau_{n+1} - \tau_n = \tau_{n+1}^\ell - \tau_n^\ell$ where $\ell = \hat Y_{\tau_n,k+1}$, so
\[  \Exp [ ( \tau_{n+1} - \tau_n ) \1 (E_n) \mid \calF'_{\tau_n+1} ]
=  \Exp [   \tau^\ell_{n+1} - \tau^\ell_n \mid \calF'_{\tau_n+1} ] - \Exp [ (\tau^\ell_{n+1} - \tau^\ell_n ) \1 (E_n^\rc) \mid \calF'_{\tau_n+1}] ,\]
where, similarly to above, $n^{\gamma} |  \Exp [ (\tau^\ell_{n+1} - \tau^\ell_n ) \1 (E_n^\rc) \mid \calF'_{\tau_n+1}] | \to 0$, a.s.
Moreover, by Proposition~\ref{prop:iid-ell}, $\Exp [   \tau^\ell_{n+1} - \tau^\ell_n \mid \calF'_{\tau_n+1} ] = \lambda_{d,k}$.
This establishes part~(i).

To prove part~(ii), observe first that, for all $n \in \N$,
\begin{equation}
\label{eq:renewal-bound}
 \| W_{n+1} - W_n \| \leq k ( \tau_{n+1} - \tau_n) , \as \end{equation}
Then the  statement in part~(ii) follows directly from~\eqref{eq:tau-exponential}.
For part~(iii), we have that 
\begin{align*}
\Exp [ W_{n+1} - W_n \mid \calF'_{\tau_n+1} ] & = \Exp [ ( W_{n+1} - W_n ) \1 (E_n) \mid \calF'_{\tau_n+1} ]
+ \Exp [ ( W_{n+1} - W_n ) \1 (E^\rc_n) \mid \calF'_{\tau_n+1} ] .\end{align*}
Then~\eqref{eq:renewal-bound} and an argument similar to~\eqref{eq:holder} shows that
\begin{align*}
n^{\gamma} \| \Exp [ ( W_{n+1} - W_n ) \1 (E^\rc_n) \mid \calF'_{\tau_n+1} ] \| \to 0, \as, 
\end{align*}
for all $\gamma \in (0,1)$.
On the other hand, on the event $E_n$ we have $W_{n+1} - W_n = W_{n+1}^\ell - W_n^\ell$ where $\ell = \hat Y_{\tau_n,k+1}$, so
\[  \Exp [ ( W_{n+1} - W_n ) \1 (E_n) \mid \calF'_{\tau_n+1} ]
=  \Exp [   W^\ell_{n+1} - W^\ell_n \mid \calF'_{\tau_n+1} ] - \Exp [ (W^\ell_{n+1} - W^\ell_n ) \1 (E_n^\rc) \mid \calF'_{\tau_n+1}] ,\]
where, again similarly to~\eqref{eq:holder}, we have that $n^{\gamma} \|  \Exp [ (W^\ell_{n+1} - W^\ell_n ) \1 (E_n^\rc) \mid \calF'_{\tau_n+1}] \| \to 0$, a.s.
Moreover, by Proposition~\ref{prop:iid-ell},
$\Exp [   W^\ell_{n+1} - W^\ell_n \mid \calF'_{\tau_n+1} ] = u_{d,k} \hat Y_{\tau_n,k+1}$.
To compare $\hat Y_{\tau_n,k+1} = \hat X_{\tau_n k + k}$
 to $\hat W_n = \hat Y_{\tau_n+1,k+1} = \hat X_{\tau_n k + 2k}$,
note that $\| X_{\tau_n k + k} - X_{\tau_n k + 2k} \| \leq k$.
For $x, y \in \R^d$ with $x \neq 0$ and $x+y \neq 0$, we have
\begin{align}
\label{eq:xy3} 
 \frac{ x + y}{\| x + y \|} - \frac{x}{\| x \|} & = \frac{y}{\| x + y\| } + \frac{x ( \| x \| - \| x + y \| )}{ \| x \| \| x + y\| }  .\end{align}
Applying~\eqref{eq:xy3},  
we see that 
$\| \hat Y_{\tau_n,k+1} - \hat W_n \| \leq 2 k/ \| W_n \|$. Since $\tau_n \geq n$ a.s., we have
from the final statement in Corollary~\ref{cor:exponential-bound} that $\liminf_{n \to \infty} n^{-1} \| W_n \| >0$.
Part~(iii) now follows.
\end{proof}

We can now complete the proof of  Theorem~\ref{thm:speed}. We use two results from the Appendix: Lemma~\ref{lem:norm-speed} 
which gives a law of large numbers for a process on $\RP$ under a drift and variance condition, and Lemma~\ref{lem:ballistic}
which implies ballisticity for a process on $\R^d$ given suitable radial drift asymptotics and a $\liminf$ speed bound.

\begin{proof}[Proof of Theorem~\ref{thm:speed}.]
By Corollary~\ref{cor:renewal-drift}(i) and~\eqref{eq:tau-exponential}, we may apply Lemma~\ref{lem:norm-speed} 
with $\zeta_n = \tau_n+1$ to obtain $\lim_{n \to \infty} n^{-1} \tau_n =\lambda_{d,k}$, a.s.
Moreover, by Corollary~\ref{cor:renewal-drift}(ii) and (iii), we may apply Lemma~\ref{lem:ballistic} to the process $W_n$ to obtain
$n^{-1} W_n \to u_{d,k} \ell$ for some random $\ell \in \Sp{d-1}$.
 
Let $J_n = \max \{ j \in \N : k \tau_{j}+ 2k \leq n \}$,
so that $k \tau_{J_n} + 2k \leq n < k \tau_{J_n +1} + 2k$.
Then by an inversion of the fact that $\tau_n/n \to \lambda_{d,k}$, 
we obtain $n^{-1} J_n \to ( k \lambda_{d,k} )^{-1}$, a.s. 
In particular, $J_n \to \infty$, a.s.
Then, since $W_{J_n} = Y_{\tau_{J_n} +1, k+1} = X_{k \tau_{J_n} + 2k}$, we have
\begin{equation}
\label{eq:W-X}
 \left\| \frac{X_n}{n} - \frac{W_{J_n}}{J_n} \cdot \frac{J_n}{n} \right\| \leq 
\frac{1}{n} \max_{k\tau_{J_n} + 2k \leq m < k\tau_{J_n+1} + 2k} \| X_m - X_{k \tau_{J_n} + 2k} \|
\leq \frac{1}{n}  \left( \tau_{J_{n+1}} - \tau_{J_n} \right) .\end{equation}
From~\eqref{eq:tau-exponential} and the Borel--Cantelli lemma we have that there exists $b < \infty$ such that
$\tau_{n+1} - \tau_n \leq b \log n$,
for all but finitely many $n$, a.s., and since $J_n = O(n)$ this implies that
$\tau_{J_{n+1}} - \tau_{J_n} \leq 2 b \log n$ for all but finitely many $n$, a.s.
Thus~\eqref{eq:W-X} yields 
\begin{equation}
\label{eq:v}
 \lim_{n \to \infty} n^{-1} X_n = \frac{u_{d,k}}{k \lambda_{d,k}} \ell, \as,
\end{equation}
which is the required a.s.~convergence result when we set $v_{d,k} := \frac{u_{d,k}}{k \lambda_{d,k}}$.
The bounded convergence theorem yields $n^{-1} \Exp \| X_n \| \to v_{d,k}$,
and the fact that $v_{d,k} > 0$ follows from Corollary~\ref{cor:exponential-bound}.
Finally, note that the law of $X$ is invariant
under orthogonal transformations of $\R^d$: for any orthogonal matrix $U$, the sequence $U X_0, UX_1, UX_2, \ldots$ has the same law
as the original $X_0, X_1, X_2, \ldots$, and so the $\ell$ in~\eqref{eq:v} satisfies
$U \ell \eqd \ell$, and the fact that $\ell$ is uniform on the sphere follows
from uniqueness of Haar measure.
\end{proof}

It remains to prove Proposition~\ref{prop:coupling_succeeds}.
To establish this result we need the following observations.
We denote by $\unif_d ( A )$ the uniform law on measurable $A \subseteq \R^d$ with $\vol{d} A \in (0,\infty)$.

\begin{lemma}
\label{lem:approximation}
There exists a constant $C \in \RP$ such that, for any $x \neq 0$,
\begin{align}
\label{eq:couple-uniforms}
\| \unif_{dk} ( \Pi ( x ) )  - \unif_{dk} ( \Pi^\ell ( x ) ) \|_{\rm TV} & \leq C \| \ell - \hat x \| .\end{align}
There exists a constant $C \in \RP$ such that for all $\ell \in \Sp{d-1}$ and all
 $x_0, \ldots, x_k$,
\begin{align}\label{eq:couple-both-bad}
 \| f ( \, \cdot \mid x_0, \ldots, x_k ) 
- f^\ell ( \, \cdot \mid x_0, \ldots, x_k ) \|_{\rm TV} \leq C \| \ell - \hat x_k \| + C \| x_k \|^{-1} ; \\
\label{eq:couple-hats}
 \| \hat f ( \, \cdot \mid x_0, \ldots, x_k ) 
- \hat f^\ell ( \, \cdot \mid x_0, \ldots, x_k ) \|_{\rm TV} \leq C \| \ell - \hat x_k \| + C \| x_k \|^{-1}  .\end{align}
Moreover, there exists $C \in \RP$ such that for all $\ell \in \Sp{d-1}$ and all
 $x_0, \ldots, x_k$,
\begin{align} 
\label{eq:ggdiff}
& {} | \Pr ( Y_{m+1} \in \calG \mid Y_m = (x_0, \ldots , x_k ) ) - \Pr ( Y^\ell_{m+1} \in \calG^\ell \mid Y_m^\ell = (x_0, \ldots, x_k ) ) | 
\nonumber\\
& {} \qquad\qquad\qquad\qquad\qquad\qquad\qquad\qquad\qquad {} \leq C \left( \| \ell - \hat x_k \| + \| x_k \|^{-1} \right).
\end{align}
\end{lemma}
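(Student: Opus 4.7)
\textbf{Overview and~\eqref{eq:couple-uniforms}.} All four bounds are Lipschitz-type continuity estimates anchored by Lemma~\ref{lem:ell=hatx}: at $\ell = \hat x$ the $\ell$-process has identical transitions to the original one, so each error term quantifies the departure as $\ell$ varies away from $\hat x$. For~\eqref{eq:couple-uniforms}, the sets $\Pi(x)$ and $\Pi^\ell(x)$ are products of $k$ balls of common radius $\delta$, with $i$th centres $x + \tfrac{i}{2}\hat x$ and $x + \tfrac{i}{2}\ell$ at distance at most $\tfrac{k}{2}\|\hat x - \ell\|$. Using the standard estimate that two equal-radius balls in $\R^d$ at centre-distance $\eta$ have symmetric difference of $d$-volume at most $C\delta^{d-1}\eta$, together with the identity $\|\unif_{dk}(A) - \unif_{dk}(B)\|_{\mathrm{TV}} = \vol{dk}(A \triangle B)/(2\vol{dk}(A))$ for equal-volume sets and the fact $\vol{dk}\Pi(x) = \vol{dk}\Pi^\ell(x) = (\nu_d\delta^d)^k$, a product-subadditivity telescoping over the $k$ factors yields~\eqref{eq:couple-uniforms}.

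\textbf{\eqref{eq:couple-both-bad}.} The key geometric input is the bound
\[\vol{d}\bigl(\calA(\calX;x) \triangle \calA^\ell(\calX;x)\bigr) \leq C\|\ell - \hat x\|,\]
holding uniformly in $x \neq 0$ and $\calX$. Via~\eqref{eq:admissible2} and~\eqref{eq:admissiblell2}, this reduces to comparing two polyhedral cones with apex $x$: both are generated by the same rays from $x$ through the points of $\calX$, together with a single extra direction --- $-\hat x$ for $\calA$ and $-\ell$ for $\calA^\ell$. A perturbation of this extra generator shifts only the facets incident to it; each such facet's trace in $B(x;1)$ has $(d-1)$-area at most $\nu_{d-1}$ and rotates by angle $O(\|\ell - \hat x\|)$, contributing $O(\|\ell - \hat x\|)$ to the $d$-volume of the symmetric difference, and at most $d-1$ facets are involved. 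The lower bound $\vol{d}\calA \geq \nu_d/2$ from Lemma~\ref{lem:well-defined} then yields $\|p(\cdot|\calX;x) - p^\ell(\cdot|\calX;x)\|_{\mathrm{TV}} \leq C'\|\ell - \hat x\|$. To transfer this to the $k$-fold product in~\eqref{eq:f-def}, I would telescope the product of conditional densities, integrating out the trailing factors to obtain
\[\|f(\cdot | x_0,\ldots,x_k) - f^\ell(\cdot | x_0,\ldots,x_k)\|_{\mathrm{TV}} \leq \sum_{i=1}^k \sup \|p(\cdot;y_{i-1}) - p^\ell(\cdot;y_{i-1})\|_{\mathrm{TV}},\]
with $y_0 := x_k$ and the supremum over $y_{i-1}$ reachable in $i-1$ steps of the process; each term is at most $C'\|\ell - \hat y_{i-1}\|$. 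The elementary estimate $\|\hat u - \hat v\| \leq 2\|u-v\|/\|u\|$ for $\|u-v\| \leq \|u\|/2$, together with $\|y_{i-1} - x_k\| \leq k$, gives $\|\hat y_{i-1} - \hat x_k\| \leq 2k/\|x_k\|$ whenever $\|x_k\| > 2k$; for $\|x_k\| \leq 2k$ the right-hand side of~\eqref{eq:couple-both-bad} exceeds $1$ after enlarging $C$, rendering the bound trivial. Combining via the triangle inequality gives~\eqref{eq:couple-both-bad}.

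\textbf{\eqref{eq:couple-hats}, \eqref{eq:ggdiff}, and the main obstacle.} For~\eqref{eq:couple-hats}, solve~\eqref{eq:f-hat} for $\hat f$ to obtain $\hat f - \hat f^\ell = (1-\alpha)^{-1}[(f - f^\ell) - \alpha(\rho - \rho^\ell)]$, where $\rho$ and $\rho^\ell$ are the densities of $\unif_{dk}(\Pi(x_k))$ and $\unif_{dk}(\Pi^\ell(x_k))$; the triangle inequality combined with~\eqref{eq:couple-uniforms} and~\eqref{eq:couple-both-bad} delivers the conclusion. For~\eqref{eq:ggdiff}, I would split
\[\bigl|\Pr(G_{m+1}|\cdot) - \Pr(G^\ell_{m+1}|\cdot)\bigr| \leq 2\|f - f^\ell\|_{\mathrm{TV}} + \int f^\ell \, \1_{\calG \triangle \calG^\ell} \, dy_1 \cdots dy_k,\]
handling the first term by~\eqref{eq:couple-both-bad} and bounding the $f^\ell$-measure of $\calG \triangle \calG^\ell$ by $O(\|\ell - \hat x_k\| + \|x_k\|^{-1})$. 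Indeed, the defining admissibility inequalities for $\calG$ and $\calG^\ell$ in~\eqref{eq:G} and its $\ell$-analog coincide when $\ell = \hat y_k$ and depend Lipschitz-continuously on $\ell$ and $\hat y_k$; combining this with $\|\hat y_k - \hat x_k\| = O(\|x_k\|^{-1})$ and the same perturbation-of-cones analysis as in the previous paragraph, applied to the family of cones parameterised by $y_k$, gives the required measure bound. The main obstacle throughout is the uniform-in-$\calX$ volume bound $\vol{d}(\calA \triangle \calA^\ell) = O(\|\ell - \hat x\|)$: the cone $\cone(x;\calX \cup \{0,x\})$ can have complex facet structure depending on $\calX$, and making the facet-perturbation estimate uniform requires a careful spherical-polytope argument quantifying how the spherical convex hull of a finite set of unit vectors changes when a single vertex is replaced by a nearby one.
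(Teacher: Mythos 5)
Your route is essentially the paper's: the same reduction of \eqref{eq:couple-both-bad} to a one-step estimate $\|p(\cdot\mid\calX;x)-p^\ell(\cdot\mid\calX;x)\|_{\rm TV}\leq C\|\ell-\hat x\|$ via the volume of $\calA\triangle\calA^\ell$ and the lower bound $\nu_d/2$, iterated over the $k$ steps with $\|\hat y_{i}-\hat x_k\|=O(\|x_k\|^{-1})$; the same algebraic identity for $\hat f-\hat f^\ell$; and the same two-term split of \eqref{eq:ggdiff} into a change of law plus the measure of $\calG\triangle\calG^\ell$. Two small points: your claim that ``at most $d-1$ facets are involved'' is wrong for $d\geq 4$ (a single generator of a cone on $k+1$ rays can be incident to many facets); what is needed, and what the paper uses, is only that the number of hyperplanar faces is bounded by a constant depending on $d$ and $k$. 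The uniformity you flag as the main obstacle is exactly what the paper dispatches in \eqref{eq:surf}: parametrise $\ell(\lambda)=\hat x+\lambda(\ell-\hat x)$ and bound the derivative of $\vol{d}\bigl(\calA^{\ell(\lambda)}\triangle\calA^{\hat x}\bigr)$ by the surface measure of $\partial\calA^{\ell(\lambda)}$ times $\|\ell-\hat x\|$, the surface measure being uniformly bounded because there are boundedly many faces and the set has diameter at most a constant --- so your facet-rotation sketch, with the corrected face count, is the intended argument rather than a gap.
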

\begin{proof}
First it is straightforward to show that for measurable $A_1, A_2 \subseteq \R^d$,
\begin{equation}
\label{eq:TV-uniforms}
\| \unif_d ( A_1 ) - \unif_d ( A_2 ) \|_{\rm TV} \leq \frac{2 \vol{d} ( A_1 \sd A_2 )}{ \vol{d}(  A_1 ) \wedge \vol{d} ( A_2 )} .\end{equation}
In particular, the bound~\eqref{eq:couple-uniforms}
follows from~\eqref{eq:TV-uniforms}
and 
the fact that $\vol{dk} ( \Pi^{\hat x} ( x ) \triangle \Pi^\ell ( x ) ) \leq C \| \ell - \hat x \|$,
since the centres of $B ( x + \frac{i}{2} \hat x ; \delta)$ and 
$B(x + \frac{i}{2} \ell ; \delta)$ are at distance at most $\frac{k}{2} \| \ell - \hat x \|$.

Next we claim that there exists a constant $C \in \RP$ such that for all $\ell \in \Sp{d-1}$, all $x \in  \R^d \setminus B(0;k+1)$ and all $\calX \subset B(x;k)$ of cardinality $k$ such that $\calA (\calX; x)$ and $\calA^\ell (\calX;x)$ have volume not smaller than $\frac12 \nu_d$, 
\begin{equation}\label{eq:couplage}
\| p ( \, \cdot \, \mid \calX ; x) -  p^\ell ( \, \cdot \, \mid \calX ; x ) \|_{\rm TV} \leq C \| \ell - \hat x \| .
\end{equation}
By Lemma~\ref{lem:ell=hatx}, $ p ( \, \cdot \, \mid \calX ; x) =  p^\ell ( \, \cdot \, \mid \calX ; x ) $ for $\ell=\hat x$, so \eqref{eq:couplage} is equivalent to
\[
\| p^{\hat x} ( \, \cdot \, \mid \calX ; x ) -  p^\ell ( \, \cdot \, \mid \calX ; x ) \|_{\rm TV} \leq C \| \ell - \hat x \| .
\]
Moreover, it follows from~\eqref{eq:TV-uniforms} that there exists a constant $C$ such that for all $A_1, A_2 \subseteq B(0;1)$ with volume not smaller than $\frac12 \nu_d$, 
\begin{equation}
\| \unif_d ( A_1 ) - \unif_d ( A_2 ) \|_{\rm TV} \leq C \vol{d} ( A_1 \triangle  A_2) .
\label{eq:MPS}
\end{equation}
It remains to estimate the volume of 
$\calA^\ell(\calX;x) \triangle  \calA^{\hat x}(\calX;x)$. Taking a parametrization of the segment from $\hat x$ to $\ell$, say $\ell(\lambda)=\hat x + \lambda (\ell-\hat x)$, $\lambda \in [0,1]$,
we can control  the derivative of the volume by the surface measure of the boundary of  the admissible set,
\begin{equation} \label{eq:surf}
\left| \frac{\ud}{\ud\lambda} {\rm Vol}_d\big( \calA^{\ell(\lambda)}(\calX;x) \triangle  \calA^{\hat x}(\calX;x)\big)\right|  \leq 
{\rm Surf}\left( \partial \calA^{\ell(\lambda)}(\calX;x)\right) \times \|\ell-\hat x\|.
\end{equation}
But the set $ \cone ( x ; {\conv}_\ell ( \calX \cup \{ x \} )  ) $ in \eqref{eq:admissiblell1} has a finite number of hyperplanar faces, uniformly bounded for a fixed $k$. Since 
$ \calA^{\ell(\lambda)}(\calX;x)$ has diameter less than $1$, we conclude that the  surface term is bounded, and further, that \eqref{eq:couplage} holds.

We claim that~\eqref{eq:couple-both-bad} follows from~\eqref{eq:couplage}.
Indeed, fix $x_0, \ldots, x_k$ and 
let $Z = (Z_1, \ldots, Z_k)$ be a random vector in $(\R^d)^{k}$
with 
\[ \Pr ( Z_1 , \ldots, Z_k \in \fB ) = \int_{\fB} f ( y_1, \ldots, y_k \mid x_0, \ldots, x_k) \ud y_1  \cdots \ud y_k ,\]
and let $Z^\ell = (Z^\ell_1, \ldots, Z^\ell_k)$ have the same distribution but with $f^\ell$ instead of $f$.
To estimate $\| Z - Z^\ell \|_{\rm TV}$ we couple $Z$ and $Z^\ell$ component by component.
Then~\eqref{eq:couplage} shows that we can couple $Z_1$ and $Z_1^\ell$ such that $\Pr ( Z_1 \neq Z_1^\ell ) \leq C \| \ell - \hat x_k \|$. Given $Z_1 = Z_1^\ell = y_1$, the conditional densities of $Z_2$ and $Z_2^\ell$
are $p ( \, \cdot \mid x_1, \ldots, x_k ; y_1)$ and $p^\ell ( \, \cdot \mid x_1, \ldots, x_k ; y_1)$,
respectively, so by~\eqref{eq:couplage} we may again couple so that $\Pr ( Z_2 \neq Z_2^\ell \mid Z_1 = Z_1^\ell ) \leq 
C \| \ell - \hat y_1 \| \leq C \| \ell - \hat x_k \| + C \| x_k \|^{-1}$. Iterating this argument yields
a coupling of $Z$ and $Z^\ell$ that fails with probability at most $C \| \ell - \hat x_k \| + C \| x_k \|^{-1}$,
which implies the total variation bound in~\eqref{eq:couple-both-bad}.

Next, we claim that~\eqref{eq:couple-hats} follows from~\eqref{eq:couple-uniforms}
and~\eqref{eq:couple-both-bad}. Indeed, by the definitions of $\hat f$ and $\hat f^\ell$,
\begin{align*}
& {} \qquad {} \left| \int_B \hat f( y_1, \ldots, y_k \mid x_0, \ldots, x_k) \ud y_1 \cdots \ud y_k
- \int_B \hat f^\ell( y_1, \ldots, y_k \mid x_0, \ldots, x_k)\ud y_1 \cdots \ud y_k \right| \\
& \leq \frac{1}{1-\alpha} \left| \int_B  f( y_1, \ldots, y_k \mid x_0, \ldots, x_k) \ud y_1 \cdots \ud y_k
- \int_B f^\ell( y_1, \ldots, y_k \mid x_0, \ldots, x_k)\ud y_1 \cdots \ud y_k \right| \\
& {} \qquad {} + \frac{1}{\alpha} \| \unif_{dk} ( \Pi(x_k) ) - \unif_{dk} (\Pi^\ell(x_k)) \|_{\rm TV} ,\end{align*}
which gives the result.

We now turn to the proof of~\eqref{eq:ggdiff}. It is sufficient to consider the case $\|x_k\| \geq 2k$. 
We decompose
\begin{align*}  
T & =  | \Pr ( Y_{m+1} \in \calG \mid Y_m = (x_0, \ldots , x_k ) ) - \Pr ( Y^\ell_{m+1} \in \calG^\ell \mid Y_m^\ell = (x_0, \ldots, x_k ) ) |  \\  
&\leq 
 | \Pr ( Y_{m+1} \in \calG \mid Y_m = (x_0, \ldots , x_k ) ) -  \Pr ( Y^\ell_{m+1} \in \calG \mid Y_m^\ell = 
 (x_0, \ldots, x_k ) ) |  \\ 
 & {} \qquad {} + | 
 \Pr ( Y^\ell_{m+1} \in \calG \mid Y_m^\ell =  (x_0, \ldots, x_k ) ) -
  \Pr ( Y^\ell_{m+1} \in \calG^\ell \mid Y_m^\ell = (x_0, \ldots, x_k ) ) | \\  
  & =:  T_1+T_2.
\end{align*}
Here $T_1 \leq C \| \ell - \hat x_k \| + C \| x_k \|^{-1}$ by~\eqref{eq:couple-both-bad}.
For the other term we see from~\eqref{eq:MPS} that
\[
T_2 \leq C \vol{dk} \big( ( \calG  \triangle  \calG^\ell )(x_k)\big),
\]
where we have used the notation
\[ 
\calH (x)=\{ (y_1,\ldots,y_k):  (x,y_1,\ldots,y_k)\in \calH \}, \text{ for } \calH \subset \R^{d(k+1)}, \, x \in \R^d.
\]
It remains to prove that
\begin{equation} 
\label{eq:surf2}
 \vol{dk} \big( ( \calG  \triangle \calG^\ell )(x_k)\big)     \leq C ( \|\ell - \hat x_k\|+  \|x_k\|^{-1})   ,
\end{equation}
for some constant $C$. 
For $k=1$ we simply use~\eqref{eq:surf} to conclude~\eqref{eq:surf2}. For general $k$, we observe that
the set $ \calG^\ell (x_k)$  has a smooth boundary with bounded surface measure in $\R^{kd}$. Then
\begin{align*}
 \vol{dk} \big( ( \calG  \triangle  \calG^\ell )(x_k)\big)   &  \leq C  \sup\{ \|\ell - \hat y_i\| : (y_1,\ldots, y_k) \in \calG(x_k) \} ,
\end{align*}
which yields~\eqref{eq:surf2}. This ends the proof.
\end{proof}

\begin{proof}[Proof of Proposition~\ref{prop:coupling_succeeds}.]
For $r \in \N$ define the event
\[ F_{n,r} := \bigcap_{m = \tau_n +1}^{\tau_n +r } \left( \{ Y_m^\ell = Y_m \} \cap ( G_m \triangle G_m^\ell )^\rc \right) .\]
Note that $F_{n,\tau_{n+1}-\tau_n} \subseteq E_n$.
Then, for $r_n := \lceil A \log n \rceil$ where $A >0$,
\begin{align*}
\Pr ( E_n^\rc \mid \calF'_{\tau_n+1} ) & \leq \Pr ( E_n^\rc, \, \tau_{n+1} - \tau_n \leq r_n \mid \calF'_{\tau_n+1} ) + \Pr (  \tau_{n+1} - \tau_n > r_n \mid \calF'_{\tau_n+1} ) \\
& \leq \Pr ( F_{n,r_n}^\rc \mid \calF'_{\tau_n+1} ) + \Pr (  \tau_{n+1} - \tau_n > r_n \mid \calF'_{\tau_n+1} ).\end{align*}
By~\eqref{eq:tau-exponential}, we may (and do) choose $A$ sufficiently large so that $\Pr (  \tau_{n+1} - \tau_n > r_n \mid \calF'_{\tau_n+1} ) \leq C/n$, a.s.
The result now follows from the claim that there is a constant $C \in \RP$ such that, for all $n \in \N$ and all $r \in \N$,
\begin{equation}
\label{eq:finite-coupling}
\Pr ( F_{n,r}^\rc \mid  \calF'_{\tau_n+1} ) \leq \frac{ C r^2}{\| Y_{\tau_n+1,k+1} \|} , \as ,
\end{equation}
and the fact that, by Corollary~\ref{cor:exponential-bound}, $\| Y_{\tau_n+1,k+1} \| = \| X_{(\tau_n +2)k} \| > c \tau_n$ a.s.~for some $c>0$ and all but finitely many~$n$,
with the simple bound $\tau_n \geq n$ a.s.
It thus remains to prove the claim~\eqref{eq:finite-coupling}. Since $F_{n,r+1} \subseteq F_{n,r}$ for $r \in \N$, we have
\begin{align*} \Pr ( F_{n,r+1}^\rc \mid  \calF'_{\tau_n+1} ) & \leq \Pr ( F_{n,r+1}^\rc \cap F_{n,r}  \mid  \calF'_{\tau_n+1} ) 
+ \Pr ( F_{n,r}^\rc   \mid  \calF'_{\tau_n+1} )  \\
& = \Exp \big[ \Pr ( F_{n,r+1}^\rc \mid \calF'_{\tau_n+r} ) \1 ( F_{n,r} ) \bigmid \calF'_{\tau_n+1} \big] 
+ \Pr ( F_{n,r}^\rc   \mid  \calF'_{\tau_n+1} ) ,\end{align*}
so to verify~\eqref{eq:finite-coupling} it suffices to prove that, for all $n \in \N$ and all $r \in \N$,
\begin{equation}
\label{eq:one-step}
\Pr ( F_{n,r+1}^\rc \mid \calF'_{\tau_n+r} ) \leq \frac{Cr}{\| Y_{\tau_n+1,k+1} \|} \text{ on } F_{n,r} .
\end{equation}
To this end, note that, on $F_{n,r}$,
\begin{align*}
\Pr ( F_{n,r+1}^\rc \mid  \calF'_{\tau_n+r} ) &
\leq \Pr ( Y_{\tau_n+r+1} \neq Y_{\tau_n+r+1}^\ell  \mid  \calF'_{\tau_n+r} )
+ \Pr ( G_{\tau_n + r+1} \triangle G_{\tau_n + r+1}^\ell \mid  \calF'_{\tau_n+r} ) .\end{align*}
On $F_{n,r}$ we have that either (i) $G_{\tau_n+r}^\rc \cap (G_{\tau_n+r}^\ell)^\rc$ occurs,
in which case 
\begin{align*}  \Pr ( Y_{\tau_n+r+1} \neq Y_{\tau_n+r+1}^\ell  \mid  \calF'_{\tau_n+r} ) 
& \leq \| f ( \, \cdot \mid Y_{\tau_n +r } ) 
- f^\ell ( \, \cdot \mid Y_{\tau_n +r} ) \|_{\rm TV} \\
& \leq C   \| \ell - \hat Y_{\tau_n +r, k+1} \| + C \|  Y_{\tau_n +r, k+1} \|^{-1}  ,\end{align*}
by~\eqref{eq:couple-both-bad}, or (ii) $G_{\tau_n+r}  \cap G_{\tau_n+r}^\ell$ occurs,
in which case
\begin{align*} 
\Pr ( Y_{\tau_n+r+1} \neq Y_{\tau_n+r+1}^\ell  \mid  \calF'_{\tau_n+r} )
& \leq \alpha \| \unif_{dk} ( \Pi ( Y_{\tau_n,k+1} ) )  - \unif_{dk} ( \Pi^\ell ( Y_{\tau_n,k+1} ) )  \|_{\rm TV} \\
& {} \qquad {} + (1-\alpha) \| \hat f ( \, \cdot \mid  Y_{\tau_n +r} ) - \hat f^\ell ( \, \cdot \mid Y_{\tau_n +r} ) \|_{\rm TV}
\\
& \leq C \| \ell - \hat Y_{\tau_n+r,k+1} \| + C \| Y_{\tau_n+r,k+1} \|^{-1} ,
\end{align*}
by~\eqref{eq:couple-uniforms} and~\eqref{eq:couple-hats}. Moreover, on $F_{n,r}$, by~\eqref{eq:ggdiff},
\[ \Pr ( G_{\tau_n + r+1} \triangle G_{\tau_n + r+1}^\ell \mid  \calF'_{\tau_n+r} ) 
\leq C \| \ell - \hat Y_{\tau_n+r,k+1} \| + C \| Y_{\tau_n+r,k+1} \|^{-1}   .
\]
Since $\ell = \hat Y_{\tau_n+1,k+1}$ and
$\|  Y_{\tau_n+r,k+1} - Y_{\tau_n+1,k+1} \| \leq r k$, we thus obtain~\eqref{eq:one-step}.
This completes the proof.
\end{proof}

\section{The planar case with unit memory}
\label{sec:memory-1}

This section is devoted to the proof of Theorem~\ref{thm:memory-one},
and so we take $d=2$ and $k=1$ throughout this section. 
For $n \in \N$, denote by $\theta_n \in [0,\pi]$ the magnitude of the interior angle of $\conv \{0,X_{n-1},X_n\}$ at $X_n$:
see Figure~\ref{fig2}.

\begin{figure}
\centering
\begin{tikzpicture}
\draw[fill=black] (0,4) circle (.5ex); 
\draw[line width=0.3mm] (0.0,4.0) -- (4.0,4.0); 
\node at (0.2,4.3) {$\0$};
\draw[line width=0.3mm] (4.0,4.0) -- (2.5,2.0);
\draw[fill=black] (4,4) circle (.5ex); 
\node at (4.5,3.5) {$X_n$};
\draw[fill=black] (2.5,2) circle (.5ex); 
\node at (3,1.5) {$X_{n-1}$};
\draw (3,4) arc (180:233:1);
\node at (2.8,3.4) {$\theta_n$};
\draw[->,dashed] (4.5,4) arc (0:233:0.5);
\node at (4,4.8) {$\phi$};
\draw[dotted,line width=0.3mm,->] (4.0,4.0) -- (5.5,4.0); 
\end{tikzpicture}
\qquad\qquad
\begin{tikzpicture}
\draw[fill=black] (0,4) circle (.5ex); 
\draw[line width=0.3mm] (0.0,4.0) -- (4.0,4.0);
\node at (0.2,4.3) {$\0$}; 
\draw (1,4) arc (0:19:1);
\node at (1.5,4.2) {$\alpha_{n+1}$};
\draw[line width=0.3mm] (4.0,4.0) -- (2.5,2.0);
\draw[fill=black] (4,4) circle (.5ex); 
\node at (4.5,3.5) {$X_n$};
\draw[fill=black] (2.5,2) circle (.5ex); 
\node at (3,1.5) {$X_{n-1}$};
\draw[fill=black] (4.5,5.5) circle (.5ex); 
\node at (5,5.8) {$X_{n+1}$};
\draw[line width=0.3mm] (4.0,4.0) -- (4.5,5.5);
\draw[line width=0.3mm] (0.0,4) -- (4.5,5.5); 
\draw (3,4) arc (180:233:1);
\node at (2.8,3.4) {$\theta_n$};
\draw (4,5.34) arc (210:263:0.5);
\node at (3.8,4.9) {$\theta_{n+1}$};
\draw[dotted,line width=0.3mm,->] (4.0,4.0) -- (5.5,4.0); 
\draw (3.6,4.0) arc (180:70:0.4);
\draw (3.65,4.0) arc (180:70:0.35);
\draw (3.75,3.65) arc (233:432:0.43);
\node at (4.8,4.5) {$\phi_{n+1}$};
\end{tikzpicture}
\caption{The definition of $\theta_n$ (\emph{left}) and
the construction of $\theta_{n+1}$ (\emph{right}).
In the right-hand diagram, the double-ruled angle is $2\pi - \theta_n - \phi_{n+1}$.}
\label{fig2}
\end{figure}
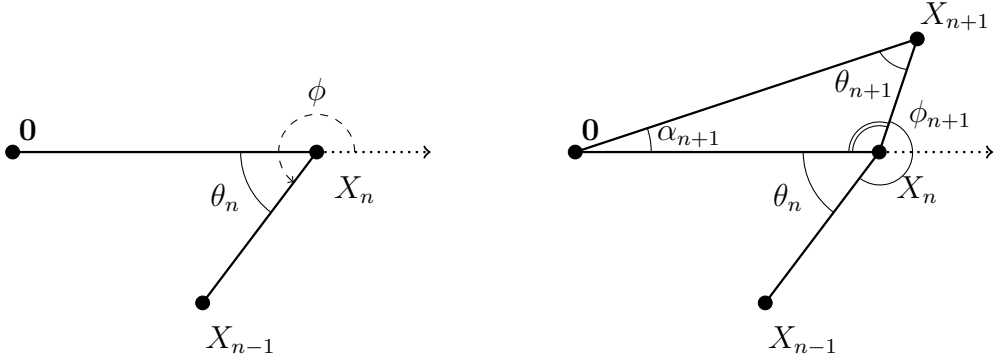

First we express the `local drift' in terms of $\theta_n$.

\begin{lemma}
\label{lem:local-drift}
Let $d=2$ and $k=1$. Then for $n \in \N$,
\[ \Exp [ ( X_{n+1} - X_n) \cdot \hat X_n ] = \Exp \left[ \frac{2\sin \theta_n}{6\pi - 3 \theta_n} \right] .\]
\end{lemma}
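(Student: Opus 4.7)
The plan is to compute the conditional expectation $\Exp[(X_{n+1} - X_n) \cdot \hat X_n \mid \calF_n]$ by direct polar-coordinate integration over the admissible set $\calA(\{X_{n-1}\}; X_n)$, and then take the total expectation.

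First I work out the geometry. In the case $d=2$, $k=1$, formula~\eqref{eq:admissible2} gives $\calA(\{X_{n-1}\}; X_n) = \cl( B(X_n;1) \setminus \cone(X_n; \{0, X_{n-1}, X_n\}) )$. The excluded cone is the planar wedge at $X_n$ spanned by the ray toward $0$ and the ray toward $X_{n-1}$; by the very definition of $\theta_n$, its interior angle at $X_n$ is $\theta_n$. Hence $\calA(\{X_{n-1}\}; X_n)$ is a sector of the unit disc around $X_n$ with angular opening $2\pi - \theta_n$ and area $\vol{2} \calA = (2\pi - \theta_n)/2$, consistent with Lemma~\ref{lem:well-defined}.

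Next I introduce local polar coordinates $(r, \alpha)$ centred at $X_n$, with $\alpha = 0$ in the direction $\hat X_n$. The ray $X_n \to 0$ then lies at $\alpha = \pi$, and the ray $X_n \to X_{n-1}$ lies at $\alpha = \pi \pm \theta_n$, the sign depending on the side of the line through $0$ and $X_n$ on which $X_{n-1}$ sits. Either way the admissible angular set $J$ is an arc of length $2\pi - \theta_n$ with one endpoint at $\alpha = \pi$. Since $(X_{n+1} - X_n) \cdot \hat X_n = r\cos\alpha$ and, by~\eqref{def:walkk}, the conditional law of $X_{n+1}$ is uniform on $\calA$ with density $2/(2\pi - \theta_n)$, I obtain
\[
\Exp[(X_{n+1} - X_n) \cdot \hat X_n \mid \calF_n] = \frac{2}{2\pi - \theta_n} \Bigl( \int_0^1 r^2 \, dr \Bigr) \Bigl( \int_J \cos\alpha \, d\alpha \Bigr) = \frac{2\sin\theta_n}{3(2\pi - \theta_n)},
\]
since the radial integral equals $1/3$ and, using $\sin(\pi - \theta_n) = \sin\theta_n$ together with $\sin(\pm\pi) = 0$, the angular integral equals $\sin\theta_n$ irrespective of the sign choice. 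Taking total expectation then yields the stated identity.

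The computation is essentially routine; the only subtlety worth verifying is that the angular integral gives the same value for both possible positions of $X_{n-1}$ relative to the line through $0$ and $X_n$, which follows immediately from the evenness of $\cos$ about $\alpha = 0$ (equivalently, by reflecting the whole configuration across that line). I therefore do not anticipate any genuine obstacle.
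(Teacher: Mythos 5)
Your proposal is correct and follows essentially the same route as the paper: condition on $\calF_n$, note that the admissible set is a disc sector of angular opening $2\pi-\theta_n$ bounded by the rays toward $0$ and $X_{n-1}$, integrate $r\cos\alpha$ in polar coordinates centred at $X_n$ to get $\frac{2\sin\theta_n}{6\pi-3\theta_n}$, and take total expectation. The only cosmetic difference is that the paper fixes the orientation so that $X_{n-1}$ sits at angle $\theta_n-\pi$, whereas you check both sign choices by the reflection symmetry, which is the same observation.
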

\begin{proof}
Suppose that $n \in \N$; note that $X_n \neq 0$ a.s.
Given $X_n$ and $X_{n-1}$, let $(r,\phi)$ be polar coordinates
with origin $(r=0)$ at $X_n$, $\phi =0$ in the direction $\hat X_n$,
and oriented so that $X_{n-1}$ is at angle $\phi = \theta_n - \pi$ relative to $X_n$.
See the left-hand part of Figure~\ref{fig2}.

The area of the disk sector on which $X_{n+1}$ is uniformly distributed is
$\pi - (\theta_n/2)$, so 
\begin{align*}
\Exp [ ( X_{n+1} - X_n) \cdot \hat X_n  \mid X_n, X_{n-1} ] 
& = \frac{2}{2\pi - \theta_n} \int_0^1   \int_{\theta_n-\pi}^{\pi} ( r \cos \phi ) \ud \phi \, r  \ud r \\
& = \frac{2 \sin \theta_n}{6\pi - 3\theta_n} ,\end{align*}
which gives the result.
\end{proof}

\begin{lemma} \label{lem:ato0}
Let $d=2$ and $k=1$. Then for $n \in \N$,
\[ \theta_{n+1} = | ( 2\pi - \theta_n) U_{n+1} - \pi | - \alpha_{n+1} ,\]
where $U_1, U_2, \ldots$ are i.i.d.~$U[0,1]$ random variables,
and $\alpha_n \to 0$ a.s.~as $n \to \infty$.
\end{lemma}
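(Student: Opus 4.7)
My plan is to set up polar coordinates at $X_n$ with reference direction $\hat X_n$, read off the angular distribution of $X_{n+1}-X_n$ from the transition rule (as was already implicit in Lemma~\ref{lem:local-drift}), and then use elementary triangle geometry on $\{0,X_n,X_{n+1}\}$ to identify $\theta_{n+1}$, with the residual $\alpha_{n+1}$ being the interior angle at the origin, which will vanish because $\|X_n\|\to\infty$.

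First I would describe $X_{n+1}-X_n$ in polar coordinates $(r,\phi)$ centred at $X_n$, with $\phi=0$ along $\hat X_n$ and $\phi=\theta_n-\pi$ the direction from $X_n$ towards $X_{n-1}$ (so that $\phi=\pi$ is towards $0$). The allowed sector is $\phi\in[\theta_n-\pi,\pi]$ of total angular size $2\pi-\theta_n$, and by the product structure of the area element $r\,\ud r\,\ud\phi$ the angle $\omega_{n+1}$ of $X_{n+1}-X_n$ is, conditional on $\calF_n$, uniform on that arc (independent of the radial part). I then set
\[
U_{n+1}:=\frac{\pi-\omega_{n+1}}{2\pi-\theta_n},
\]
which is a measurable function of $(\omega_{n+1},\theta_n)$; its conditional law given $\calF_n$ is $U[0,1]$ regardless of $\calF_n$, hence $U_{n+1}$ is independent of $\calF_n$, which iterated yields that $(U_n)_{n\ge 1}$ is i.i.d.~$U[0,1]$. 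Rearranging gives $\omega_{n+1}=\pi-(2\pi-\theta_n)U_{n+1}$.

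Next I would read off $\theta_{n+1}$ from the triangle $\{0,X_n,X_{n+1}\}$. The interior angle at $X_n$ is the angle between $-\hat X_n$ (direction towards $0$, at angular position $\pi$) and $X_{n+1}-X_n$ (at angular position $\omega_{n+1}\in(-\pi,\pi)$); treating the cases $\omega_{n+1}\ge 0$ and $\omega_{n+1}<0$ separately gives the angle $\pi-|\omega_{n+1}|$. Writing $\alpha_{n+1}:=\angle(X_n,0,X_{n+1})$ for the angle at the origin and using that the three interior angles sum to $\pi$,
\[
\theta_{n+1}=\pi-\alpha_{n+1}-\bigl(\pi-|\omega_{n+1}|\bigr)=|\omega_{n+1}|-\alpha_{n+1}
=\bigl|(2\pi-\theta_n)U_{n+1}-\pi\bigr|-\alpha_{n+1},
\]
which is the desired identity.

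Finally, for $\alpha_{n+1}\to 0$ a.s., the law of sines applied to $\{0,X_n,X_{n+1}\}$ (with side $X_{n+1}-X_n$ opposite $0$) gives $\sin\alpha_{n+1}=\|X_{n+1}-X_n\|\sin\theta_{n+1}/\|X_n\|\le 1/\|X_n\|$. By Corollary~\ref{cor:exponential-bound}, $\|X_n\|\ge\rho n$ a.s.~for all but finitely many $n$, so $\alpha_{n+1}=O(1/n)\to 0$ a.s. The main potential pitfall is just bookkeeping in Step~2 to confirm that the interior angle at $X_n$ is precisely $\pi-|\omega_{n+1}|$ across both sign regimes of $\omega_{n+1}$ (and that $\omega_{n+1}\in(-\pi,\pi)$ so the branch of $|\,\cdot\,|$ on the circle coincides with the absolute value on $\R$); the rest is elementary.
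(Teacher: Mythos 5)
Your argument is correct and is essentially the paper's own proof: polar coordinates at $X_n$ with the uniform angular component giving $U_{n+1}$, the angle sum in the triangle $\{0,X_n,X_{n+1}\}$ yielding $\theta_{n+1}=|\omega_{n+1}|-\alpha_{n+1}$ (the paper just splits the two sign cases explicitly rather than using $\pi-|\omega_{n+1}|$), and $\alpha_{n+1}\to 0$ from $\|X_n\|\to\infty$ via Corollary~\ref{cor:exponential-bound}. The only micro-point to add is that $\sin\alpha_{n+1}\le 1/\|X_n\|$ forces $\alpha_{n+1}\to 0$ (rather than $\alpha_{n+1}\to\pi$) because once $\|X_n\|>1$ the side opposite $\alpha_{n+1}$ has length at most $1<\|X_n\|$, so $\alpha_{n+1}<\pi/2$; this matches the bound the paper states without proof.
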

\begin{proof}
Let $n \in \N$.
Given $X_n$ and $X_{n-1}$, once more use 
the polar coordinates described in the proof of Lemma~\ref{lem:local-drift}.
The angle $\phi$ 
of $X_{n+1}$ in these coordinates
is $\theta_n + \phi_{n+1} -\pi$, where $\phi_{n+1}$,
the angle between vectors $X_nX_{n-1}$ and $X_n X_{n+1}$ measured outside the convex hull,
 is uniformly
 distributed on $[0, 2\pi -\theta_n]$; say $\phi_{n+1} = (1 - U_{n+1}) (2\pi -\theta_n)$
for $U_{n+1} \sim U[0,1]$ independent of $X_n, X_{n-1}$. See  the right-hand part of Figure~\ref{fig2}.
The angle at $X_{n+1}$ 
in the triangle $T$ with vertices $0, X_n, X_{n+1}$
 is $\theta_{n+1}$;
denote the angle at $0$ in $T$ by $\alpha_{n+1}$.
The angle at $X_n$ in $T$  is either
$\theta_n + \phi_{n+1}$ (if $\theta_n + \phi_{n+1} \leq \pi$)
or $2\pi - \theta_n - \phi_{n+1}$ (if $\theta_n + \phi_{n+1} > \pi$).
In the first case 
\[ \theta_{n+1} = \pi - \theta_n - \phi_{n+1} - \alpha_{n+1} ,\]
and in the second case 
\[ \theta_{n+1} = \pi - (2\pi - \theta_n - \phi_{n+1} ) - \alpha_{n+1} = \theta_n + \phi_{n+1} - \pi - \alpha_{n+1}.\]
Combining these we get
\[ \theta_{n+1} = | \theta_n +\phi_{n+1} - \pi | - \alpha_{n+1} 
= |    (2\pi - \theta_n ) U_{n+1} - \pi   | - \alpha_{n+1} 
.\]
Since $\| X_{n+1} - X_n \| \leq 1$, it is not hard to see that $0 \leq \alpha_{n+1} \leq C ( 1 + \| X_n \| )^{-1}$,
and this tends to $0$ since $\| X_n \| \to \infty$ by Corollary~\ref{cor:exponential-bound}.
\end{proof}

\begin{lemma}
\label{lem:theta-convergence}
We have that
$\theta_n \tod \theta$ as $n \to \infty$
where $\theta \in [0,\pi]$ has the distribution uniquely determined by the 
distributional fixed-point equation
\begin{equation}
\label{eq:fixed-point}
 \theta \eqd | ( 2\pi - \theta) U - \pi | , \, \theta \in \R ,\end{equation}
where $U \sim U[0,1]$ is independent of the $\theta$ on the right.
Moreover, the random variable $\theta$ has probability density
given by
\begin{equation}
\label{eq:theta-density}
 f(t) = \frac{2}{3\pi^2} (2 \pi -t ) , \text{ for } t \in [0,\pi] .
\end{equation}
\end{lemma}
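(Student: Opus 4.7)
The plan is to decouple from the vanishing perturbation $\alpha_n$ by first analysing the idealised recursion $\tilde\theta_{n+1} := T(\tilde\theta_n, U_{n+1})$ on $[0,\pi]$, where $T(\theta,u) := |(2\pi - \theta)u - \pi|$, started from $\tilde\theta_1 := \theta_1$. A short check shows that $T(\cdot, u)$ maps $[0,\pi]$ to itself, and the reverse triangle inequality yields the key Lipschitz bound
\[
|T(\theta,u) - T(\theta',u)| \leq u\,|\theta-\theta'|.
\]
Averaging over $U \sim \unif[0,1]$, the corresponding Markov kernel is a strict contraction of the Wasserstein-$1$ metric on probability measures supported on $[0,\pi]$, with ratio $\tfrac{1}{2}$. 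Since this metric space is complete, Banach's fixed point theorem supplies a unique stationary distribution $\mu_\star$, and $\tilde\theta_n$ converges in distribution to $\mu_\star$ from any initial state.

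Next I verify that $\mu_\star$ has the density~\eqref{eq:theta-density}. If $\theta$ has density $f(t) = \frac{2}{3\pi^2}(2\pi - t)$ on $[0,\pi]$ and $U \sim \unif[0,1]$ is independent, then, conditional on $\theta$, the variable $Z := (2\pi - \theta)U - \pi$ is uniform on $[-\pi, \pi - \theta]$, so $V := |Z|$ has conditional density
\[
f_{V\mid\theta}(v) = \frac{1}{2\pi - \theta}\bigl(\1\{\theta \leq \pi - v\} + 1\bigr), \quad v \in [0,\pi].
\]
Integrating against $f$, the factors of $2\pi - \theta$ cancel, yielding
\[
f_V(v) = \frac{2}{3\pi^2}\int_0^\pi \bigl(\1\{\theta \leq \pi - v\} + 1\bigr)\,d\theta = \frac{2}{3\pi^2}(2\pi - v) = f(v).
\]
Hence $V \eqd \theta$, so~\eqref{eq:fixed-point} is satisfied by this distribution, which by the uniqueness from step~1 must be $\mu_\star$.

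To conclude, I couple the two chains by running them with the same sequence $(U_n)$ and the same initial value; combining the Lipschitz bound with Lemma~\ref{lem:ato0} gives
\[
|\theta_{n+1} - \tilde\theta_{n+1}| \leq U_{n+1}|\theta_n - \tilde\theta_n| + \alpha_{n+1},
\]
so, taking expectations and using independence of $U_{n+1}$ from $\calF_n$,
\[
\Exp|\theta_{n+1} - \tilde\theta_{n+1}| \leq \tfrac12 \Exp|\theta_n - \tilde\theta_n| + \Exp\alpha_{n+1}.
\]
Since $\alpha_n \in [0,\pi]$ and $\alpha_n \to 0$ a.s., dominated convergence gives $\Exp\alpha_n \to 0$; a routine estimate (splitting the recursion at a large index $N$ and sending $n \to \infty$ before $N \to \infty$) then yields $\Exp|\theta_n - \tilde\theta_n| \to 0$, which, combined with $\tilde\theta_n \tod \mu_\star$, gives $\theta_n \tod \theta$. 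The main subtlety lies in this last step, where one must convert the almost-sure decay of $\alpha_n$ into an effective quantitative error estimate, but the deterministic bound $\alpha_n \leq \pi$ makes this manageable.
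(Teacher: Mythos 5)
Your proposal is correct, but it follows a genuinely different route from the paper's. For the idealised chain $\tilde\theta_{n+1}=T(\tilde\theta_n,U_{n+1})$ you exploit the bound $|T(x,u)-T(x',u)|\leq u|x-x'|$ to get a Wasserstein-$1$ contraction with ratio $\Exp U=\tfrac12$ and invoke Banach's fixed point theorem on $\mathcal{P}([0,\pi])$; the paper instead uses only the $1$-Lipschitz bound $|T(x,u)-T(x',u)|\leq|x-x'|$ and obtains uniqueness and convergence from a Doeblin minorisation $Q(x,\ud y)\geq\tfrac1{2\pi}\,\ud y$ together with uniform ergodicity (Meyn--Tweedie), in total variation. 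Your treatment of the perturbation is also different and arguably cleaner: the strict contraction lets you absorb the $\alpha_n$'s through the recursion $\Exp|\theta_{n+1}-\tilde\theta_{n+1}|\leq\tfrac12\Exp|\theta_n-\tilde\theta_n|+\Exp\alpha_{n+1}$ (note this needs $U_{n+1}$ independent of $\calF_n$ and $|\theta_n-\tilde\theta_n|$ being $\calF_n$-measurable, both of which hold here), whereas the paper, having only a non-expansive map, restarts the auxiliary chain at a late time $k$, bounds $|\psi_{k+m}-\theta_{k+m}|\leq\sum_{j=k}^{k+m}\alpha_j$, and performs a two-parameter limit in the L\'evy--Prokhorov metric. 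Your verification of the density via the conditional density of $|(2\pi-\theta)U-\pi|$ is an equivalent (and slightly slicker) form of the paper's CDF computation. The one point you gloss over is the clause ``$\theta\in\R$'' in the fixed-point equation: your Banach argument yields uniqueness only among laws supported on $[0,\pi]$, whereas the paper first shows that \emph{any} real-valued solution of \eqref{eq:fixed-point} is automatically concentrated on $[0,\pi]$ (via the tail identity $\Pr(\theta>r\pi)=\Pr(\theta>(r+1)\pi)$, using $T\geq 0$ and $T(\theta,u)\leq\max(\pi,\theta-\pi)$). This is a short additional step you should include if the uniqueness claim is to be read over all real-valued solutions; it does not affect your proof of the convergence $\theta_n\tod\theta$ or the identification of the limit law.
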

\begin{remark}
It is not hard to check that~\eqref{eq:theta-density} provides a solution to~\eqref{eq:fixed-point}:
see the proof below, which also establishes uniqueness.
To come up with~\eqref{eq:theta-density} in the first place, one can deduce 
that the density $f$ of $\theta$ solving~\eqref{eq:fixed-point} satisfies the
differential equation 
$(\pi + t)f'(t) = - f(\pi - t)$ for all $t \in (0 , \pi)$ (by differentiating \eqref{eq:equadiff} below), and we observed that a linear $f$ solves this.
\end{remark}
\begin{proof}[Proof of Lemma~\ref{lem:theta-convergence}.]
Define
\[
  T(x,u) :=  |(2\pi-x)u-\pi| .
\]
Then the fixed-point equation~\eqref{eq:fixed-point} reads $T ( \theta, U) \eqd \theta$, while
Lemma~\ref{lem:ato0} shows that $\theta_{n+1} = T(\theta_n, U_{n+1}) - \alpha_{n+1}$.

Let $\theta$ satisfy~\eqref{eq:fixed-point}. Then clearly $\theta \geq 0$, a.s. Moreover,
for any $t \geq 0$,
\begin{equation}
\label{eq:T-theta}
 \Pr ( \theta > t ) = \Pr ( T (\theta, U ) > t )
= \Pr ( (2\pi - \theta) U > \pi + t ) + \Pr ( (2\pi - \theta) U < \pi - t ) .\end{equation}
In particular, taking $t = r \pi$ for $r \in \N$, using the fact that $\theta \geq 0$ and $U \in [0,1]$,
\begin{align*} \Pr (\theta > r \pi ) & \leq \Pr ( 2 \pi U > (1+r) \pi ) + \Pr ( \theta \geq 2 \pi , \, ( \theta - 2\pi) U > (r-1) \pi ) \\
& \leq 0 + \Pr ( \theta > (r+1) \pi ) ,\end{align*}
so that $\Pr ( \theta > r\pi ) = \Pr ( \theta > (r+1) \pi)$ and hence
$\Pr ( \theta > \pi ) = \lim_{r \to \infty} \Pr ( \theta > r \pi ) = 0$. Thus any (finite)
solution $\theta$ to~\eqref{eq:fixed-point} has $\theta \in [0,\pi]$, a.s.

Define a Markov 
transition operator $Q$ on state-space $[0,\pi]$ by $Q(x,A) = \Pr ( T (x, U) \in A )$ where $U \sim U[0,1]$, $x \in [0,\pi]$ and measurable $A \subseteq [0,\pi]$.
Then~\eqref{eq:fixed-point} is equivalent to the statement that
$\Exp Q (\theta, A) = \Pr ( \theta \in A)$ for all measurable $A \subseteq [0,\pi]$, i.e.,
the distributional solutions to~\eqref{eq:fixed-point} are precisely the invariant measures of~$Q$.
Note also that
\[ Q (x, \ud y) \geq \Pr ( \pi - (2\pi-x) U \in \ud y ) \geq \frac{1}{2\pi} \ud y , \text{ for all } x,y \in (0,\pi), \]
so that for any measurable $A \subseteq [0,\pi]$, $\inf_x Q(x,A) \geq \frac{1}{2} g(A)$
where $g$ is uniform measure on $[0,\pi]$.
This is a version of the Doeblin condition, and it ensures (see e.g.~\cite[Theorem~16.0.2, p.~394]{mt}) that $Q$ is uniformly ergodic:
there is a unique invariant measure $\mu$ such that 
 $\sup_{x \in [0,\pi]} \| Q^m (x , \, \cdot \, ) - \mu ( \, \cdot \, ) \|_{\rm TV} \to 0$ as $m \to \infty$.
In particular~\eqref{eq:fixed-point} has a unique distributional solution.
Moreover, if $\calP[0,\pi]$ denotes the set of probability measures on $[0,\pi]$, then
\begin{equation}
\label{eq:DF} \lim_{m\to\infty} \sup_{\nu \in \calP[0,\pi]} \| \nu Q^m - \mu\|_{\rm TV} = 0.
 \end{equation}

 Let $\nu_n$ denote the law of $\theta_n$.
If $(\psi_k, \psi_{k+1},\ldots)$ is the Markov chain started from $\psi_k = \theta_k$
and with evolution $\psi_{k+m+1} = T ( \psi_{k+m}, U_{k+m+1} )$, then $(\psi_{k},\psi_{k+1},\ldots)$ lives on the same probability space as $(\theta_{k},\theta_{k+1},\ldots)$,
and $\psi_{k+m}$ has law $\nu_k Q^m$.
Let $\rho$ denote the L\'evy--Prokhorov metric on distributions.
Then
\begin{align}
\label{eq:rho}
\rho( \nu_{k+m} , \mu ) \leq \rho (\nu_{k+m} , \nu_{k} Q^m ) + \rho (  \nu_k Q^m , \mu ) .\end{align}
Here by~\eqref{eq:DF} we can choose $m$ sufficiently large so that $\rho (  \nu_k Q^m , \mu ) \leq \|  \nu_k Q^m - \mu \|_{\rm TV} \leq \eps$ for all $k$.
On the other hand,
we see that $| T(x,u) - T(y,u) | \leq | x - y|$,
so 
\[ | \psi_{k+\ell+1} - \theta_{k+\ell+1} | \leq | T ( \psi_{k+\ell}, U_{k+\ell+1} ) - T( \theta_{k+\ell} , U_{k+\ell+1} ) - \alpha_{k+\ell+1} |
\leq | \psi_{k+\ell} - \theta_{k+\ell} | + \alpha_{k+\ell+1} ,\]
which shows that $| \psi_{k + m} - \theta_{k+m} | \leq \sum_{j=k}^{k+m} \alpha_j$.
Thus, since $\alpha_j \to 0$, 
for any $m$ we have $\lim_{k \to \infty} | \psi_{k+m} - \theta_{k+m} | =0$, a.s.,
and so $\rho (\nu_{k+m} , \nu_{k} Q^m ) \to 0$ as $k \to \infty$.
Thus in~\eqref{eq:rho} we may take both $m$ and $k$ large to see that 
$\lim_{n \to \infty} \rho (\nu_n, \mu)=0$.
Thus $\theta_n$ converges in law to $\mu$, the unique  distributional  solution to~\eqref{eq:fixed-point}. 

It remains to identify the law $\mu$. To this end, we check that if $\theta$ has density $f$ as given by~\eqref{eq:theta-density}, then
\begin{align}\nn
\Pr ( T (\theta, U) \leq t ) &  = \Exp \left[ \frac{2t}{2\pi -\theta} \1 \{ \theta \leq \pi - t \} \right] + \Exp \left[ \frac{\pi + t -\theta}{2\pi -\theta} \1 \{ \theta > \pi - t \} \right] \\ \label{eq:equadiff}
& = 2t \int_0^{\pi-t} \frac{f(y)}{2\pi -y} \ud y + \int_{\pi-t}^\pi \frac{(\pi + t -y)f(y)}{2\pi -y} \ud y \\ \nn
& =  \frac{4t (\pi - t)}{3\pi^2} + \frac{2}{3\pi^2} \int_{\pi-t}^\pi ( \pi + t - y) \ud y \\ \nn
& =  \frac{4t (\pi - t)}{3\pi^2} + \frac{t^2}{\pi^2} = \frac{t (4 \pi - t)}{3\pi^2} ,
\end{align}
which is $\int_0^t f(s) \ud s$. Hence $f$ provides a solution to 
the distributional equation~\eqref{eq:fixed-point}.  
 \end{proof}

Finally, we can complete the proof of Theorem~\ref{thm:memory-one}.

\begin{proof}[Proof of Theorem~\ref{thm:memory-one}.]
By  Lemmas~\ref{lem:local-drift} and~\ref{lem:theta-convergence} and the bounded convergence theorem,
\[ \lim_{n \to \infty} \Exp [ ( X_{n+1} - X_n ) \cdot \hat X_n ] = \Exp \left[ \frac{2\sin \theta}{6\pi - 3 \theta} \right] ,\]
where $\theta$ has the density given by~\eqref{eq:theta-density}. Then we compute
\[ \Exp  \left[ \frac{2\sin \theta}{6\pi - 3 \theta} \right]
= \frac{4}{9\pi^2} \int_0^\pi   \sin t \ud t = \frac{8}{9\pi^2}  .\]
Comparison of Theorem~\ref{thm:speed} and Lemma~\ref{lem:norms} shows that this quantity is
indeed $v_{2,1}$, which ends the proof of Theorem~\ref{thm:memory-one}.
\end{proof}

\appendix

\section{Auxiliary results: speeds and directions}
\label{sec:appendix}

The next result, which will be our tool for establishing ballisticity,
is an important ingredient in the proof of Theorem~\ref{thm:speed}.

\begin{lemma}
\label{lem:ballistic}
Let $d \in \N$. Let $\xi_0, \xi_1, \xi_2, \ldots$ be a stochastic
process in $\R^d$ adapted to a filtration
$\calF_0, \calF_1, \calF_2, \ldots$. Let $\Delta_n := \xi_{n+1} - \xi_n$.
Suppose that $\liminf_{n \to \infty} n^{-1} \| \xi_n \| \geq c$ a.s., for some constant $c>0$,
  and that 
for some $B < \infty$, $\eps >0$, and $v \in (0,\infty)$ we have
\begin{align}
\label{eq:xi-moms}
 \Exp [ \| \Delta_n \|^2 \mid \calF_n ] & \leq B , \as, \text{ for all } n \in \ZP ; \\
\lim_{n \to \infty} n^\eps \left\| \Exp [ \Delta_n \mid \calF_n ] - v \hat \xi_n \right\| & = 0 , \as 
\label{eq:xi-drift}
\end{align}
Then $\lim_{n\to\infty} n^{-1} \xi_n = v \ell$ a.s.~for some random $\ell\in\Sp{d-1}$.
\end{lemma}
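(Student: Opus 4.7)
The plan is to establish separately that $\|\xi_n\|/n \to v$ a.s.\ and that $\hat\xi_n \to \ell$ a.s.\ for some random $\ell \in \Sp{d-1}$, from which $\xi_n/n = (\|\xi_n\|/n)\hat\xi_n \to v\ell$ will follow. Note that $\|\xi_n\| \to \infty$ a.s.\ by the $\liminf$ hypothesis, so $\hat\xi_n$ is well defined eventually. From the identity $(\|\xi_{n+1}\|+\|\xi_n\|)(\|\xi_{n+1}\|-\|\xi_n\|) = 2\xi_n\cdot\Delta_n + \|\Delta_n\|^2$ one extracts the Taylor-type expansion
\[ \|\xi_{n+1}\| - \|\xi_n\| = \Delta_n \cdot \hat\xi_n + R_n, \text{ with } |R_n| \leq 2\|\Delta_n\|^2/\|\xi_n\|, \]
while the identity~\eqref{eq:xy3} applied with $x=\xi_n$, $y=\Delta_n$ gives
\[ \hat\xi_{n+1} - \hat\xi_n = \|\xi_{n+1}\|^{-1}\bigl( \Delta_n - (\hat\xi_n\cdot\Delta_n)\hat\xi_n - R_n\hat\xi_n \bigr), \]
in particular $\|\hat\xi_{n+1}-\hat\xi_n\| \leq 2\|\Delta_n\|/\|\xi_n\|$.

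For the norm, take $\calF_n$-conditional expectations in the first expansion: by~\eqref{eq:xi-drift} one has $\Exp[\Delta_n\mid\calF_n] = v\hat\xi_n + \rho_n$ with $\|\rho_n\|=o(n^{-\eps})$ a.s., while~\eqref{eq:xi-moms} and the $\liminf$ bound give $\Exp[|R_n|\mid\calF_n] \leq 2B/\|\xi_n\| = O(1/n)$ eventually a.s. Thus $a_n := \Exp[\|\xi_{n+1}\|-\|\xi_n\|\mid\calF_n] \to v$ a.s. Decomposing $\|\xi_n\|-\|\xi_0\| = \sum_{k<n} a_k + \sum_{k<n} \eta_k$ with martingale differences $\eta_k := \|\xi_{k+1}\|-\|\xi_k\|-a_k$, the bound $|\|\xi_{k+1}\|-\|\xi_k\||\leq\|\Delta_k\|$ gives $\Exp[\eta_k^2\mid\calF_k]\leq 4B$ a.s. Hence $\sum_{k\geq 1}\eta_k/k$ is an $L^2$-bounded martingale, converges a.s., and Kronecker's lemma yields $n^{-1}\sum_{k<n}\eta_k \to 0$ a.s. Since $|a_k|\leq\sqrt{B}$ and $a_k\to v$ a.s., Cesaro gives $n^{-1}\sum_{k<n}a_k\to v$, and so $\|\xi_n\|/n\to v$ a.s.

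For the direction, I first use~\eqref{eq:xi-moms} and Borel--Cantelli (applied to $\Pr(\|\Delta_n\|\geq cn/4)\leq 16B/(c^2n^2)$) to get $\|\xi_{n+1}\|\geq cn/4$ eventually a.s.\ as well. A short computation, writing $\|\xi_{n+1}\|^{-1}=\|\xi_n\|^{-1}+O(\|\Delta_n\|/n^2)$ and noting that $\Exp[\Delta_n-(\hat\xi_n\cdot\Delta_n)\hat\xi_n\mid\calF_n] = \rho_n - (\hat\xi_n\cdot\rho_n)\hat\xi_n$ has norm $o(n^{-\eps})$, then yields $\|\Exp[\hat\xi_{n+1}-\hat\xi_n\mid\calF_n]\| = o(n^{-1-\eps}) + O(n^{-2})$ eventually a.s., which is summable, so the drift series converges absolutely a.s. The martingale part has conditional second moment at most $\Exp[\|\hat\xi_{n+1}-\hat\xi_n\|^2\mid\calF_n]\leq 4B/\|\xi_n\|^2 = O(n^{-2})$ eventually a.s., hence forms an $L^2$-bounded martingale that converges a.s. Therefore $\hat\xi_n$ converges a.s.\ to some $\ell$, and $\|\hat\xi_n\|=1$ forces $\ell\in\Sp{d-1}$. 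Combining with $\|\xi_n\|/n\to v$ completes the proof. The main obstacle is that $\|\Delta_n\|$ is only $L^2$-controlled, not almost surely bounded; this is handled via two standard martingale tools---Kronecker's lemma for the norm argument and direct $L^2$-summability for the directional argument---both of which just barely exploit the second-moment assumption through the ambient scaling $\|\xi_n\|\asymp n$.
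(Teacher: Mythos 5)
Your overall architecture is the same as the paper's (Appendix Lemmas~\ref{lem:norm-speed} and~\ref{lem:angles}): expand $\|\xi_{n+1}\|-\|\xi_n\|$ and $\hat\xi_{n+1}-\hat\xi_n$ to first order, treat the radial part by a Doob decomposition plus a martingale strong law (your Kronecker argument replaces the paper's appeal to a martingale growth estimate, and is sound there because the conditional variance bound $B$ is deterministic), and treat the direction by summability of the conditional drift of $\hat\xi_n$ plus a.s.\ convergence of its martingale part. The radial half is correct as written, since in $|R_n|\leq 2\|\Delta_n\|^2/\|\xi_n\|$ the denominator $\|\xi_n\|$ is $\calF_n$-measurable.

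The gap is in the directional half, at the step where you write $\|\xi_{n+1}\|^{-1}=\|\xi_n\|^{-1}+O(\|\Delta_n\|/n^2)$ and then take $\Exp[\,\cdot\mid\calF_n]$. The error in that expansion is of size $\|\Delta_n\|/(\|\xi_n\|\,\|\xi_{n+1}\|)$, and $\|\xi_{n+1}\|$ is not $\calF_n$-measurable; your Borel--Cantelli bound $\|\xi_{n+1}\|\geq cn/4$ is an eventually-a.s.\ statement about the realized path and cannot be inserted inside the conditional expectation, because the conditional law of $\Delta_n$ (of which only the second moment is controlled) may put mass on jumps of order $\|\xi_n\|$, on which $\|\xi_{n+1}\|$ is arbitrarily small and the error blows up. The repair is exactly the truncation used in the paper's Lemma~\ref{lem:angles}: split on $\{\|\Delta_n\|\leq\tfrac12\|\xi_n\|\}$, where $\|\xi_{n+1}\|\geq\tfrac12\|\xi_n\|$ and the expansion error has conditional expectation $O(B\|\xi_n\|^{-2})$, while on the complement one bounds $\|\hat\xi_{n+1}-\hat\xi_n\|\leq 2$ and uses conditional Chebyshev, $\Pr(\|\Delta_n\|>\tfrac12\|\xi_n\|\mid\calF_n)\leq 4B\|\xi_n\|^{-2}$, together with $\Exp[\|\Delta_n^\perp\|\1\{\|\Delta_n\|>\tfrac12\|\xi_n\|\}\mid\calF_n]\leq 2B/\|\xi_n\|$; this recovers your claimed rate $o(n^{-1-\eps})+O(n^{-2})$. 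A second, smaller slip: the martingale part of $\hat\xi_n$ is not shown to be $L^2$-bounded, since the bound $\Exp[\|\hat\xi_{n+1}-\hat\xi_n\|^2\mid\calF_n]\leq 4B/\|\xi_n\|^2=O(n^{-2})$ holds only after an a.s.\ finite but random time, so the unconditional increment variances need not be summable; what you do have is $\sum_n\Exp[\|\hat\xi_{n+1}-\hat\xi_n\|^2\mid\calF_n]<\infty$ a.s., and you should conclude by the convergence theorem for martingales with a.s.\ finite sum of conditional variances (as the paper does, citing Durrett), after which your argument goes through. (Your pointwise inequality $\|\hat\xi_{n+1}-\hat\xi_n\|\leq 2\|\Delta_n\|/\|\xi_n\|$ is correct, via $\max(\|\xi_n\|,\|\xi_{n+1}\|)\geq\|\xi_n\|$, so the conditional variance estimate itself is fine.)
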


The proof of this result will go by establishing in turn a limiting speed (Lemma~\ref{lem:norm-speed})
and a limiting direction (Lemma~\ref{lem:angles}). First we need a couple of elementary bounds.

\begin{lemma}
For all $x, y \in \R^d$,
\begin{align}
\label{eq:norm-increment}
 \left| \| x +y \| - \| x \| - \hat x \cdot y \right| & \leq \frac{2 \| y \|^2}{\| x \|} .
\end{align}
Moreover, for all $x, y \in \R^d$ with $x \neq 0$ and $x +y \neq 0$,
\begin{align}
\label{eq:angle-increment}
 \left\| \frac{x+y}{\| x +y \|} - \frac{x}{\| x \|} - \frac{y - \hat x ( \hat x  \cdot y)}{\| x \|} \right\| & \leq \frac{3 \| y \|^2}{\| x \| \| x + y \|} .
\end{align}
\end{lemma}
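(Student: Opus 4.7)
For the first inequality (A.1), my approach is to rationalise $\|x+y\|-\|x\|$:
\[
\|x+y\|-\|x\| \;=\; \frac{\|x+y\|^2-\|x\|^2}{\|x+y\|+\|x\|} \;=\; \frac{2\,x\cdot y + \|y\|^2}{\|x+y\|+\|x\|} ,
\]
then subtract $\hat x\cdot y = (x\cdot y)/\|x\|$ and combine over the common denominator $\|x\|(\|x+y\|+\|x\|)$. Elementary estimates ($|x\cdot y|\leq\|x\|\|y\|$ and $|\|x+y\|-\|x\||\leq\|y\|$) bound the resulting numerator by $2\|x\|\|y\|^2$, while the denominator is at least $\|x\|^2$, yielding (A.1).

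For (A.2), my plan is to decompose the error vector relative to $\hat x$. Let $y_\perp := y - \hat x(\hat x\cdot y)$ and $z := \hat x\cdot y$, so that $x+y = (\|x\|+z)\hat x + y_\perp$. A direct computation then gives
\[
\frac{x+y}{\|x+y\|} \;=\; \frac{\|x\|+z}{\|x+y\|}\,\hat x + \frac{y_\perp}{\|x+y\|} ,
\]
so that the error vector $A := \frac{x+y}{\|x+y\|} - \hat x - \frac{y_\perp}{\|x\|}$ has component along $\hat x$ equal to $-\alpha/\|x+y\|$, where $\alpha := \|x+y\|-\|x\|-z$ is the quantity bounded in (A.1), and perpendicular component equal to $-(\|x+y\|-\|x\|)\,y_\perp/(\|x\|\,\|x+y\|)$. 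Applying (A.1) bounds the parallel part by $2\|y\|^2/(\|x\|\|x+y\|)$; the crude estimates $\|y_\perp\|\leq\|y\|$ and $|\|x+y\|-\|x\||\leq\|y\|$ bound the perpendicular part by $\|y\|^2/(\|x\|\|x+y\|)$; and the triangle inequality yields (A.2) with constant $3$.

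Neither inequality presents a real obstacle. The only non-obvious point is the choice to decompose the error in (A.2) relative to $\hat x$ rather than $(x+y)/\|x+y\|$: once this is done, the perpendicular part of $(x+y)/\|x+y\|$ is exactly $y_\perp/\|x+y\|$, exposing the cancellation that turns the $y_\perp$ coefficient into the small quantity $\|x+y\|-\|x\|$, while the parallel component naturally brings in the error $\alpha$ from (A.1). Thus (A.1) feeds directly into (A.2), as indicated by the order in which the two inequalities are stated.
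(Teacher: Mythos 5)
Your proof is correct and follows essentially the same route as the paper: the rationalisation $\|x+y\|-\|x\|=\bigl(2\,x\cdot y+\|y\|^2\bigr)/\bigl(\|x+y\|+\|x\|\bigr)$ for the first bound, and then feeding that bound into the second by splitting the error into a radial part of size $\bigl|\|x+y\|-\|x\|-\hat x\cdot y\bigr|/\|x+y\|$ and a transverse part controlled by $\|y_\perp\|\,\bigl|\|x+y\|-\|x\|\bigr|/(\|x\|\,\|x+y\|)$, giving the same $2+1=3$. Your orthogonal decomposition along $\hat x$ is just a slightly different bookkeeping of the paper's identity~\eqref{eq:xy3} followed by the denominator swap $\|x+y\|\to\|x\|$, so no substantive difference.
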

\begin{proof}
First we prove~\eqref{eq:norm-increment}. It suffices to suppose that $x \neq 0$.
We have
\begin{align*}
 \| x + y \| - \| x \| & = \frac{ \| x + y \|^2 - \| x \|^2 }{\| x + y \| + \| x \|}  = \frac{2 x \cdot y + \| y \|^2}{\| x + y \| + \| x \|} .\end{align*}
Hence
\begin{equation}
\label{eq:xy1}
 \left|  \| x + y \| - \| x \| - \frac{2 x \cdot y}{\| x + y \| + \| x \|} \right| \leq \frac{\| y\|^2}{\| x \|} .\end{equation}
Moreover, 
\begin{equation}
\label{eq:xy2}
\left| \frac{1}{\| x + y \| + \| x \|}  -\frac{1}{2 \| x \|} \right| \leq \frac{| \| x+y \| - \| x \| |}{2 \| x \|^2} \leq \frac{\|y\|}{2 \|x\|^2} .\end{equation}
By the triangle inequality, $| \| x + y \| - \| x \| - \hat x \cdot y |$ is bounded above by
\begin{align*}
 \left| \| x + y \| - \| x \| - \frac{2 x \cdot y}{ \| x+y\| + \|x \|} \right| + 2 \| x \cdot y \|
\left| \frac{1}{ \| x+y\| + \|x \|} - \frac{1}{2 \| x \|} \right|, \end{align*}
and then combining~\eqref{eq:xy1} and~\eqref{eq:xy2}, we obtain~\eqref{eq:norm-increment}.

For~\eqref{eq:angle-increment}, suppose that $x \neq 0$ and $x+y \neq 0$. Then, by~\eqref{eq:xy3} and~\eqref{eq:norm-increment},
\begin{align*}
\left\| \frac{ x + y}{\| x + y \|} - \frac{x}{\| x \|} - \frac{y - \hat x ( \hat x \cdot y)}{\| x + y\| } \right\|   = 
 \frac{\left| \| x +y \|- \| x \| - \hat x \cdot y \right|}{\| x + y \|} 
  \leq \frac{2 \| y \|^2}{\|x \| \| x + y \|} .\end{align*}
Now we use the fact that $\| y - \hat x ( \hat x \cdot y) \| \leq \| y\|$ and
\[ \left| \frac{1}{\| x + y\|} - \frac{1}{\|x\|} \right| = \frac{| \| x+y\| - \|x \| |}{ \| x \| \| x + y\|} \leq \frac{\| y\|}{\| x \| \| x +y \|} \]
to get~\eqref{eq:angle-increment}.
\end{proof}

\begin{lemma}
\label{lem:norm-speed}
Let $\zeta_0, \zeta_1, \zeta_2, \ldots$ be a stochastic process on $\RP$ adapted to a filtration
$\calF_0, \calF_1, \calF_2, \ldots$. Suppose that there exist $B < \infty$ and $v \in \R$ such that
\begin{align}
\label{eq:zeta-moms}
\Exp [ ( \zeta_{n+1} - \zeta_n )^2 \mid \calF_n ] & \leq B , \as ; \\
\label{eq:zeta-drift}
 \lim_{n \to \infty}  \left| \Exp [ \zeta_{n+1} - \zeta_n \mid \calF_n ] - v   \right| & = 0 , \as 
\end{align}
Then $\lim_{n\to\infty} n^{-1} \zeta_n = v$, a.s.
\end{lemma}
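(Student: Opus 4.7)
The plan is to use the standard Doob decomposition: write
\[
\zeta_n - \zeta_0 = M_n + A_n, \quad A_n := \sum_{k=0}^{n-1} D_k, \quad M_n := \sum_{k=0}^{n-1} \eta_k,
\]
where $D_k := \Exp [ \zeta_{k+1} - \zeta_k \mid \calF_k ]$ and $\eta_k := \zeta_{k+1} - \zeta_k - D_k$, so that $(M_n)$ is a martingale for $(\calF_n)$. The goal is then to show separately that $n^{-1} A_n \to v$ and $n^{-1} M_n \to 0$, both a.s.

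The convergence $n^{-1} A_n \to v$ a.s.\ is immediate from~\eqref{eq:zeta-drift} by the Ces\`aro lemma, since $D_k \to v$ a.s. For the martingale part, first observe that
\[
\Exp [ \eta_k^2 \mid \calF_k ] \leq \Exp [ (\zeta_{k+1} - \zeta_k)^2 \mid \calF_k ] \leq B, \as,
\]
by~\eqref{eq:zeta-moms}. Hence the $L^2$-martingale $N_n := \sum_{k=1}^n k^{-1} \eta_{k-1}$ satisfies
\[
\Exp N_n^2 = \sum_{k=1}^n k^{-2} \Exp \eta_{k-1}^2 \leq B \sum_{k=1}^\infty k^{-2} < \infty,
\]
so $N_n$ is bounded in $L^2$ and hence converges a.s. by the martingale convergence theorem. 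Kronecker's lemma then yields $n^{-1} M_n \to 0$ a.s., and combining the two limits gives $n^{-1} \zeta_n \to v$ a.s.

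There is no serious obstacle here: the argument is the classical martingale law of large numbers under bounded conditional second moments, with the only mild subtlety being that one should be careful that~\eqref{eq:zeta-drift} is an almost-sure statement (not merely a statement in $L^1$), so that Ces\`aro applies directly path-by-path. If one prefers to avoid Kronecker and the martingale convergence theorem, an alternative is to use Azuma's inequality on truncated increments, but since~\eqref{eq:zeta-moms} gives only an $L^2$ bound on the increments, the cleanest route is the one above.
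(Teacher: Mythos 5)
Your proof is correct, and it follows the same overall route as the paper: a Doob decomposition $\zeta_n = \zeta_0 + M_n + A_n$, Ces\`aro applied pathwise to the drift part via~\eqref{eq:zeta-drift}, and a proof that the martingale part is $o(n)$ a.s.\ using the conditional second-moment bound~\eqref{eq:zeta-moms}. The only difference is the tool used for the martingale part: the paper invokes a martingale growth-rate theorem from a reference (giving $|M_n| \leq n^{(1/2)+\eps}$ for all but finitely many $n$, a.s.), whereas you give the classical self-contained argument, namely that $N_n = \sum_{k=1}^{n} k^{-1}\eta_{k-1}$ is an $L^2$-bounded martingale (using orthogonality of the martingale differences $\eta_k$, which are in $L^2$ since $\Exp[\eta_k^2 \mid \calF_k] \leq B$), hence converges a.s., so Kronecker's lemma gives $n^{-1}M_n \to 0$ a.s. Both yield the same conclusion; your version buys self-containedness at the cost of a couple of extra lines, while the paper's citation gives slightly more (an explicit $n^{1/2+\eps}$ growth bound), which is not needed for this lemma.
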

\begin{proof}
As in the Doob decomposition, let $A_0 :=0$ and $A_n := \sum_{m=0}^{n-1} \Exp [ \zeta_{m+1} - \zeta_m \mid \calF_m]$ for $n \in \N$,
so that
$M_n := \zeta_n - A_n$ is a martingale with $M_0 = \zeta_0$. Moreover,
\begin{align*} \Exp [ M_{n+1}^2 - M_n^2 \mid \calF_n ] & = \Exp [ (M_{n+1} -M_n )^2 \mid \calF_n ]
\\
& \leq \Exp [ ( \zeta_{n+1} - \zeta_n )^2 \mid \calF_n ]  \leq B , \as ,\end{align*}
by~\eqref{eq:zeta-moms}. It follows that,  for any $\eps >0$,
$| M_n | \leq n^{(1/2)+\eps}$ for all but finitely many $n$, a.s.: to see this one may apply 
e.g.~Theorem~2.8.1 of~\cite{bluebook} (take $f(y) = y^2$ and $a(y)=y^{1+\eps}$ in that result).
Hence, a.s.,
$\lim_{n \to \infty} n^{-1} \zeta_n = \lim_{n \to \infty} n^{-1} A_n = v$, 
by~\eqref{eq:zeta-drift}.
\end{proof}

\begin{lemma}
\label{lem:angles}
Let $d \in \N$. Let $\xi_0, \xi_1, \xi_2, \ldots \in \R^d$ be adapted to a filtration
$\calF_0, \calF_1, \calF_2, \ldots$. Let $\Delta_n := \xi_{n+1} - \xi_n$.
Suppose that for some $B < \infty$,~\eqref{eq:xi-moms} holds.
Let $\Delta_n^\perp := \Delta_n - \hat \xi_n (\Delta_n \cdot \hat \xi_n)$.
Suppose also that $\sum_{n=1}^\infty n^{-1} \| \Exp [ \Delta_n^\perp \mid \calF_n ]\| < \infty$ a.s., and,
 for some $c>0$,
 $\liminf_{n \to \infty} n^{-1} \| \xi_n \| \geq c$ a.s.
Then $\lim_{n \to \infty} \hat \xi_n = \ell$ a.s.~for some random $\ell \in \Sp{d-1}$.
\end{lemma}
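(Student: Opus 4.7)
The plan is to express $\hat\xi_{n+1}-\hat\xi_n$ as a sum of terms that are either absolutely summable a.s.\ or martingale differences with an a.s.-convergent series; a telescoping sum then yields a.s.\ convergence of $\hat\xi_n$. Throughout we use that $\liminf_{n\to\infty} n^{-1}\|\xi_n\|\geq c$ a.s.\ provides a random integer $N$ with $\|\xi_n\|\geq cn/2$ for all $n\geq N$; in particular, $\hat\xi_n$ and $\Delta_n^\perp$ are well-defined for $n\geq N$.

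Applying~\eqref{eq:angle-increment} with $x=\xi_n$ and $y=\Delta_n$, and noting that $y-\hat x(\hat x\cdot y)=\Delta_n^\perp$, gives
\[
\hat\xi_{n+1}-\hat\xi_n=\frac{\Delta_n^\perp}{\|\xi_{n+1}\|}+E_n^{(1)},\qquad \|E_n^{(1)}\|\leq \frac{3\|\Delta_n\|^2}{\|\xi_n\|\,\|\xi_{n+1}\|}.
\]
Since $\bigl|\|\xi_{n+1}\|^{-1}-\|\xi_n\|^{-1}\bigr|\leq \|\Delta_n\|/(\|\xi_n\|\,\|\xi_{n+1}\|)$ and $\|\Delta_n^\perp\|\leq\|\Delta_n\|$, replacing the denominator $\|\xi_{n+1}\|$ by the $\calF_n$-measurable $\|\xi_n\|$ introduces a further error $E_n^{(2)}$ of norm at most $\|\Delta_n\|^2/(\|\xi_n\|\,\|\xi_{n+1}\|)$. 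For $n\geq N$ each $\|E_n^{(i)}\|\leq 12 c^{-2} n^{-2}\|\Delta_n\|^2$, and~\eqref{eq:xi-moms} combined with Fubini gives $\Exp\sum_n n^{-2}\|\Delta_n\|^2 \leq B\sum_n n^{-2}<\infty$, whence $\sum_n\|E_n^{(i)}\|<\infty$ a.s.

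It remains to control the main term $\Delta_n^\perp/\|\xi_n\|$, which I split as
\[
\frac{\Delta_n^\perp}{\|\xi_n\|} = \frac{\Exp[\Delta_n^\perp\mid\calF_n]}{\|\xi_n\|} + M_n,\qquad M_n:=\frac{\Delta_n^\perp-\Exp[\Delta_n^\perp\mid\calF_n]}{\|\xi_n\|}.
\]
For $n\geq N$ the first summand has norm at most $2c^{-1}n^{-1}\|\Exp[\Delta_n^\perp\mid\calF_n]\|$, which is a.s.\ summable by the standing hypothesis $\sum n^{-1}\|\Exp[\Delta_n^\perp\mid\calF_n]\|<\infty$. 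Because $\|\xi_n\|$ is $\calF_n$-measurable, $(M_n)$ is a (vector) martingale-difference sequence with $\Exp[\|M_n\|^2\mid\calF_n]\leq B/\|\xi_n\|^2\leq 4B/(c^2n^2)$ for $n\geq N$, so $\sum_n\Exp[\|M_n\|^2\mid\calF_n]<\infty$ a.s.; applied componentwise, the square-function form of the martingale convergence theorem then yields a.s.\ convergence of $\sum_n M_n$. Summing the telescope, $\hat\xi_n$ converges a.s., and the limit $\ell$ lies in $\Sp{d-1}$ because $\|\hat\xi_n\|=1$ for $n\geq N$. The only delicate step is the martingale piece: the hypotheses give only a.s., not $L^2$, control on the conditional variances, which is precisely the situation handled by the square-function criterion.
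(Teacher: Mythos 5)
Your proof is correct and follows essentially the paper's route: both telescope $\hat \xi_n$, use the expansion~\eqref{eq:angle-increment} to isolate the main term $\Delta_n^\perp / \| \xi_n \|$ up to a quadratic error, dispose of the conditional-mean part via the summability hypothesis, and conclude with the conditional-variance (square-function) martingale convergence theorem applied componentwise. The only difference is organisational: where the paper truncates on $\{ \| \Delta_n \| > \tfrac12 \| \xi_n \| \}$ (cf.~\eqref{eq:angle-big-jump}) so as to obtain $\calF_n$-measurable error bounds and then Doob-decomposes $\hat \xi_n$, you control the error terms pathwise via the eventual bound $\| \xi_n \| \geq cn/2$ together with the a.s.\ finiteness of $\sum_n n^{-2} \| \Delta_n \|^2$, and you centre $\Delta_n^\perp / \| \xi_n \|$ directly to form the martingale.
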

\begin{proof}
First note that, by Markov's inequality and~\eqref{eq:xi-moms}, for any $q > 0$,  
\begin{align}
\label{eq:angle-big-jump}
 \Exp [ \| \hat \xi_{n+1} - \hat \xi_n \|^q  \1 \{ \| \Delta_n \| > \tfrac{1}{2} \| \xi_n \| \} \mid \calF_n ]
& \leq 2^q \Pr ( \| \Delta_n \|^2 > \tfrac{1}{4} \| \xi_n \|^2 \mid \calF_n  ) \nonumber\\
& \leq   2^{2+q}  B \| \xi_n\|^{-2} , \as \end{align}
On the other hand, we apply~\eqref{eq:angle-increment} with $x = \xi_n$ and $y = \Delta_n$ to get, on $\{ \xi_n \neq 0\}$,
\begin{align*}
\left\| \hat \xi_{n+1} - \hat \xi_n - \frac{\Delta_n^\perp}{\| \xi_n \|} \right\| \1 \{ \| \Delta_n \| \leq \tfrac{1}{2} \| \xi_n \| \}
\leq \frac{6 \| \Delta_n \|^2}{\| \xi_n \|^2} .\end{align*}
Noting that
$\Exp [ \| \Delta_n^\perp \| \1 \{ \| \Delta_n \| > \tfrac{1}{2} \| \xi_n \| \} \mid \calF_n ]
\leq 2 \| \xi_n \|^{-1} \Exp [ \| \Delta_n \|^2 \mid \calF_n]$, by~\eqref{eq:xi-moms} we get
\begin{align}
\label{eq:angle-drift} 
\left\|
\Exp [  \hat \xi_{n+1} - \hat \xi_n 
\mid \calF_n ] - \frac{ \Exp [ \Delta_n^\perp \mid \calF_n ]}{ \| \xi_n \| } \right\| \leq 16 B \| \xi_n\|^{-2} , \text{ on } \{ \xi_n \neq 0\},
\end{align}
using the $q=1$ case of~\eqref{eq:angle-big-jump}.
It follows from~\eqref{eq:xy3} that
$\| \hat \xi_{n+1} - \hat \xi_n \| \leq 2 \frac{\| \Delta_n \|}{\| \xi_n + \Delta_n \|}$, so 
\[ \Exp [ \| \hat \xi_{n+1} - \hat \xi_n \|^2  \1 \{ \| \Delta_n \| \leq \tfrac{1}{2} \| \xi_n \| \} \mid \calF_n ]
\leq 16 B \| \xi_n\|^{-2}, \as \]
Together with the  $q=2$ case of~\eqref{eq:angle-big-jump}, this implies
\[  \Exp [ \| \hat \xi_{n+1} - \hat \xi_n \|^2    \mid \calF_n ] \leq 32 B \| \xi_n\|^{-2}, \as \]
Define $A_n := \sum_{m=0}^{n-1} \Exp [ \hat \xi_{m+1} - \hat \xi_m \mid \calF_m ]$,
so that $M_n := \hat \xi_n - A_n$ is a martingale in $\R^d$.
Now
\[ \Exp [ \| M_{n+1} - M_n \|^2 \mid \calF_n ] \leq \Exp [ \| \hat \xi_{n+1} - \hat \xi_n \|^2 \mid \calF_n ] \leq 32 B \| \xi_n \|^{-2} .\]
But $\| \xi_n \| > (c/2) n$ for all but finitely many $n$, so we get
$\sum_{n =0}^\infty \Exp [ \| M_{n+1} - M_n \|^2 \mid \calF_n ] < \infty$, a.s.
It follows that $M_n \to M_\infty$ a.s.~for some $M_\infty \in\R^d$, by e.g.~the $d$-dimensional
version of Theorem~5.4.9 of~\cite{durrett}.
Hence  for $\hat \xi_n$ to converge a.s.,
it is sufficient that $\lim_{n \to \infty} A_n$ exists a.s., and, by~\eqref{eq:angle-drift},
sufficient for this is that 
$\sum_{n=1}^\infty n^{-1} \| \Exp [ \Delta_n^\perp \mid \calF_n ]\| < \infty$~a.s.
Also, since $\| \xi_n \| \to \infty$, the limit of $\hat \xi_n$ is non-zero.
\end{proof}

Now we can complete the proof of Lemma~\ref{lem:ballistic}.

\begin{proof}[Proof of Lemma~\ref{lem:ballistic}]
Taking $x = \xi_n$ and $y=\Delta_n$ in~\eqref{eq:norm-increment}, taking conditional expectations, and using~\eqref{eq:xi-moms}, we obtain
\[ \left| \Exp [ \| \xi_{n+1} \| - \| \xi_n \| \mid \calF_n ] - \hat \xi_n \cdot \Exp [ \Delta_n \mid \calF_n ] \right| \leq 
2 B \| \xi_n \|^{-1} .\]
Then by assumption~\eqref{eq:xi-drift} and the fact that $\| \xi_n \| > (c/2)n$ for all but finitely many $n$,
\[ \lim_{n\to\infty} n^\eps \left| \Exp [ \| \xi_{n+1} \| - \| \xi_n \| \mid \calF_n ] - v \right| = 0, \as \]
So we can apply Lemma~\ref{lem:norm-speed} with $\zeta_n = \| \xi_n\|$ to deduce that
$\lim_{n\to\infty} n^{-1} \| \xi_n \| =v$, a.s. Moreover,
it also follows from~\eqref{eq:xi-drift}
that
$n^\eps \| \Exp [ \Delta_n^\perp \mid \calF_n ] \| \to 0$, a.s. Hence the conditions of Lemma~\ref{lem:angles}
are also satisfied, and we conclude that $\lim_{n\to\infty}  \hat \xi_n  = \ell$ a.s., for some $\ell \in \Sp{d-1}$.
Then $\lim_{n\to\infty} n^{-1} \xi_n = \lim_{n \to \infty}  n^{-1} \| \xi_n \| \hat \xi_n = v \ell$, a.s.
\end{proof}

The next result, which shows how local speed translates to global speed, is an important
ingredient in the proof of Theorem~\ref{thm:memory-one}.

\begin{lemma}
\label{lem:norms}
Let $d \in \N$. Let $\xi_0, \xi_1, \xi_2, \ldots$ be a stochastic
process in $\R^d$ with  $\xi_0 = 0$, such that, for some constant $B < \infty$,
\[ \Pr ( \| \xi_{n+1} - \xi_n \| \leq B ) = 1, \text{ for all } n \in \ZP ,\]
and suppose that $\| \xi_n \| \to \infty$, a.s. Then
\[ \lim_{n \to \infty} \left| \frac{1}{n} \Exp \| \xi_n \|
- \frac{1}{n} \sum_{m=0}^{n-1} \Exp [ ( \xi_{m+1} - \xi_m) \cdot \hat \xi_m ] \right| = 0 .\]
In particular, 
if $\lim_{n \to \infty} \Exp [ ( \xi_{n+1} - \xi_n) \cdot \hat \xi_n ] = v \in [0,\infty]$
then $n^{-1} \Exp \| \xi_n\| \to v$ as well.
\end{lemma}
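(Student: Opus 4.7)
The plan is to write $\|\xi_n\|$ as a telescoping sum $\sum_{m=0}^{n-1}(\|\xi_{m+1}\|-\|\xi_m\|)$, using $\xi_0=0$, and to compare each increment with its radial counterpart $(\xi_{m+1}-\xi_m)\cdot\hat\xi_m$ via the elementary bound~\eqref{eq:norm-increment}. Taking expectations and applying the triangle inequality reduces the first displayed claim to showing that $n^{-1}\sum_{m=0}^{n-1}\Exp[E_m]\to 0$, where
\[
E_m := \bigl|\|\xi_{m+1}\|-\|\xi_m\| - (\xi_{m+1}-\xi_m)\cdot\hat\xi_m\bigr|.
\]
By the Cesàro lemma, it suffices to prove $\Exp[E_m]\to 0$ as $m\to\infty$.

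I would establish this via two complementary bounds on $E_m$. First, the reverse triangle inequality gives $|\|\xi_{m+1}\|-\|\xi_m\||\leq\|\xi_{m+1}-\xi_m\|\leq B$, while Cauchy--Schwarz (with the convention $\hat 0 = 0$) gives $|(\xi_{m+1}-\xi_m)\cdot\hat\xi_m|\leq B$; hence $E_m\leq 2B$ a.s. Second, applying~\eqref{eq:norm-increment} with $x=\xi_m$ and $y=\xi_{m+1}-\xi_m$ yields $E_m\leq 2B^2/\|\xi_m\|$ whenever $\xi_m\neq 0$. Splitting on the event $\{\|\xi_m\|\leq K\}$ versus its complement, for any $K>0$ this gives
\[
\Exp[E_m] \;\leq\; 2B\,\Pr(\|\xi_m\|\leq K) + \frac{2B^2}{K}.
\]
Given $\eta>0$, I would choose $K=4B^2/\eta$ to make the second term $\eta/2$; since the hypothesis $\|\xi_m\|\to\infty$ a.s.~implies $\Pr(\|\xi_m\|\leq K)\to 0$ as $m\to\infty$, the first term is eventually below $\eta/2$ as well. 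Hence $\Exp[E_m]\to 0$, which establishes the first claim.

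The second claim will follow by combining the first with the Cesàro lemma applied to the sequence $a_m := \Exp[(\xi_{m+1}-\xi_m)\cdot\hat\xi_m]$: if $a_m\to v$ then $n^{-1}\sum_{m=0}^{n-1}a_m\to v$, and hence $n^{-1}\Exp\|\xi_n\|\to v$ as well. The only mild obstacle is the possibility that $\|\xi_m\|$ is small, which invalidates the $2B^2/\|\xi_m\|$ bound by itself; this is precisely what the uniform bound $E_m\leq 2B$ on the exceptional set $\{\|\xi_m\|\leq K\}$ is designed to handle, and the hypothesis $\|\xi_m\|\to\infty$ a.s.~(in fact only convergence in probability is needed) ensures this exceptional event has negligible probability for large $m$.
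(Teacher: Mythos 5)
Your proposal is correct and follows essentially the same route as the paper: telescope $\|\xi_n\|$ using $\xi_0=0$, control each increment's deviation from its radial projection via~\eqref{eq:norm-increment} together with the uniform bound from $\|\Delta_m\|\leq B$, show the expected error tends to zero, and finish with Ces\`aro. The only cosmetic difference is that you prove $\Exp[E_m]\to 0$ by an explicit truncation on $\{\|\xi_m\|\leq K\}$ where the paper simply invokes the bounded convergence theorem.
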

\begin{proof}
Let $\Delta_n := \xi_{n+1} - \xi_n$.
We have from~\eqref{eq:norm-increment} that for any $y$ with $\| y \| \leq B$,
\begin{equation}
\label{eq1}
  \left| \| x + y \| - \| x \| - \hat x \cdot y \right| \leq C (1 + \| x\|)^{-1} .\end{equation}
It follows from an application of~\eqref{eq1}  with $x = \xi_m$ and $y = \Delta_m$ that
\[ \Exp \| \xi_n \| = \sum_{m=0}^{n-1} \Exp [ \| \xi_{m} +\Delta_m \| - \| \xi_m \| ]
=  \sum_{m=0}^{n-1} \Exp [ \hat \xi_m \cdot \Delta_m ] + \sum_{m=0}^{n-1} \Exp \zeta_m ,\]
where $| \zeta_m | \leq C (1 + \| \xi_m \|)^{-1}$. Since $\| \xi_m \| \to \infty$ a.s.,
the bounded convergence theorem implies that $\Exp \zeta_m \to 0$, and the claimed result follows.
\end{proof}

{ 
\section*{Acknowledgements}
F.C.~and M.M.~acknowledge the support of the project SWIWS (ANR-17-CE40-0032). The authors
are grateful to two anonymous referees for their comments and suggestions.

}
\end{document}